\newcommand\bi{\begin{itemize}}
\newcommand\ei{\end{itemize}}
\def\input{symbols} \clearpage{\input{symbols} \clearpage}
\def\addsymbol #1: #2#3{$#1$ \> \parbox{5in}{#2 \dotfill \pageref{#3}}\\}
\newtheorem{fact}{Fact}
\newtheorem{defin}{Definition}[section]
          \newtheorem{teo}{Theorem}[section]
          \newtheorem{con}{Conjecture}
          \newtheorem{cond}{Condition}
          \newtheorem{prop}[teo]{Proposition}
          \newtheorem{lem}{Lemma}[section]
          \newtheorem{rmk}[teo]{Remark}
          \newtheorem{cor}{Corollary}[section]
          \newcommand{\bfact}{\begin{fact}}
          \newcommand{\efact}{\end{fact}}
          \newcommand{\Netbk}{{\cal N}^{b,\kappa}}
          \newcommand{\beq}{\begin{equation}}
          \newcommand{\eeq}{\end{equation}}
          \newcommand{\beqn}{\begin{eqnarray}}
          \newcommand{\beqnn}{\begin{eqnarray*}}
          \newcommand{\eeqn}{\end{eqnarray}}
          \newcommand{\eeqnn}{\end{eqnarray*}}
          \newcommand{\bprop}{\begin{prop}}
          \newcommand{\eprop}{\end{prop}}
          \newcommand{\bc}{\be\begin{array}{r@{\,}c@{\,}l}}
\newcommand{\ec}{\end{array}\ee}
          \newcommand{\bcor}{\begin{cor}}
          \newcommand{\ecor}{\end{cor}}
          \newcommand{\bcon}{\begin{con}}
          \newcommand{\econ}{\end{con}}
          \newcommand{\bcond}{\begin{cond}}
          \newcommand{\econd}{\end{cond}}
          \newcommand{\bteo}{\begin{teo}}
          \newcommand{\eteo}{\end{teo}}
          \newcommand{\brm}{\begin{rmk}}
          \newcommand{\erm}{\end{rmk}}
          \newcommand{\blem}{\begin{lem}}
          \newcommand{\elem}{\end{lem}}
          \newcommand{\ben}{\begin{enumerate}}
          \newcommand{\een}{\end{enumerate}}
          \newcommand{\bei}{\begin{itemize}}
          \newcommand{\eei}{\end{itemize}}
          \newcommand{\bdf}{\begin{defin}}
          \newcommand{\edf}{\end{defin}}
          \renewcommand{\>}{&>&}
          \newcommand{\fr}{\frac}
          \renewcommand{\r}{{\mathbb R}}
          \newcommand{\bz}{{\bar z}}
          \newcommand{\Z}{{\mathbb Z}}
          \newcommand{\cG}{{\cal G}}
          \newcommand{\R}{{\mathbb R}}
          \newcommand{\E}{{\mathbb E}}
          \renewcommand{\P}{{\mathbb P}}
          \newcommand{\N}{{\mathbb N}}
          \newcommand{\cP}{{\cal P}}
	\newcommand{\M}{{\cal M}}
          \newcommand{\W}{{\cal W}}
          \newcommand{\cE}{{\cal E}}
          \newcommand{\h}{{\cal H}}
          \newcommand{\cU}{{\cal U}}
          \newcommand{\G}{\Gamma}
          \newcommand{\half}{\frac{1}{2}}
          \newcommand{\D}{{\cal D}}
\newcommand\ii{\item}
\newcommand{\btt}{\begin{theorem}}
\newcommand{\ett}{\end{theorem}}
\newcommand{\Net}{\mathcal{N}}
\newcommand{\daw}{\downarrow}
\newcommand{\uaw}{\uparrow}
\newcommand{\raw}{\rightarrow}
\newcommand{\Wl}{\mathcal{W}_l}
\newcommand{\Wr}{\mathcal{W}_r}
\newcommand{\be}{\begin{equation}}
\newcommand{\ee}{\end{equation}} 
\newcommand{\n}{\eta}
\newcommand{\Mzero}{{\cal M}}
\newcommand\no{\nonumber}
\newcommand\eps{\epsilon}
\newcommand\Q{\mathbb{Q}}
          \newcommand{\cL}{{\cal T}}
          \newcommand\sqr{\vcenter{
          \hrule height.1mm
          \hbox{\vrule width.1mm height2.2mm\kern2.18mm\vrule width.1mm}
          \hrule height.1mm}}        % This is a sli
\title{ Perturbations of the Voter Model in One-Dimension}
\author[1,2]{C.M. Newman}
\author[2]{ K. Ravishankar}
\author[3,4]{E. Schertzer \footnote{Corresponding author: emmanuel.schertzer@gmail.com}}
\affil[1]{Courant Institute of Mathematical Sciences, New York University, 251 Mercer St., New York, NY 10012, USA.}
\affil[2]{NYU-ECNU Institute of Mathematical Sciences at NYU Shanghai, 3663 Zhongshan Road North, Shanghai 200062, China.} 
\affil[3]{UPMC Univ. Paris 06,
Laboratoire de Probabilit\'es et Mod\`eles Al\'eatoires, CNRS UMR 7599, Paris, France.}
\affil[4]{Coll\`ege de France,
Center for Interdisciplinary Research in Biology, CNRS UMR 7241, Paris, France.}     
\begin{document}

\maketitle

\begin{abstract}
We study the scaling limit of a large class of voter model perturbations in one dimension,
including stochastic Potts models, to 
a universal limiting object, the continuum voter model perturbation.  The perturbations can 
be described in terms of bulk and boundary nucleations of new colors (opinions).
The discrete and continuum (space) models are obtained from their respective duals, the
discrete net with killing  and Brownian net with killing. These determine the color genealogy
by means of {\it reduced graphs\/}.
We focus our attention on models where the voter and boundary nucleation dynamics 
depend only on the colors of nearest neighbor sites, for which convergence of the discrete net with killing 
to its continuum analog was proved in an earlier paper by the authors. We use some detailed properties 
of the Brownian net with killing to prove voter model perturbations convergence to its continuum counterpart.  A crucial property of reduced graphs is that  even in the  continuum, they are finite almost surely.
An important issue is how vertices of  the continuum reduced graphs are strongly approximated
by their discrete analogues.
\end{abstract}

 \section{Introduction}

\subsection{General voter model perturbations}

Motivated by problems in Biology and Statistical Physics (see e.g., \cite{MDDGL99} \cite{NP00} \cite{OHLN06}), Cox, Durrett, and Perkins \cite{CDP11}  considered a class of interacting particle
systems on $\Z^d$,  called voter model
perturbations (or VMP),  whose rates of transition are close 
to the ones of a classical voter model.
Informelly, 
they considered models whose transition rates
is parametrized by $\eps$ with $\eps\to0$, so that 
the transition rates
$c^{\eps}_{x,\eta}(j)$ (the transition rate of site $x$ to state $j$ given a configuration $\eta$
for the model with parameter $\eps$)
converges to the rate of a (possibly non-nearest neighbor) zero-drift voter model.
%\beqn
%\label{rate-general}
%\forall j\in\{1,\cdots,q\}, \ \ c_{x,\eta}^{\eps}(i) \ = \  c_{x,\eta}^v(i)\ + \  \eps \ c_{x,\eta}^{*,n}(i) + o(\eps)
%\eeqn
%where 
%$c^v_{x,\eta}(j)$
%s the transition rate of a standard  (possibly non-nearest neighbors)
%voter model and
%the remaining term is such that $c^*_{x,\eta}(j)\geq0, \ \forall x,\eta,j$.
For $d\geq3$, $q=2$ (i.e. for spin systems)
the authors show that, under mild technical assumptions,  the properly rescaled
local density 
of one of the colors converges to the solution
of an explicit reaction diffusion equation.
They are then able to combine this convergence result together with 
some percolation arguments to 
show some properties of the particle system when $\eps$ is small enough.

In this work, we consider a similar problem when $d=1$. We will construct
a natural continuous object --- the continuum voter model perturbation (CVMP)--- 
which will be seen to be the scaling limit of a certain type of
voter model perturbation.
As we shall see, such a limit can not be described in
terms of a reaction-diffusion equation anymore (contrary to the case $d\geq3$), 
but directly in terms of a duality relation with the Brownian net and Brownian net with killing. 
The former is 
a family of one-dimensional branching-coalescing
Brownian motions, as first introduced by 
Sun and Swart \cite{SS07} and then studied further by Newman, Ravishankar and Schertzer \cite{NRS08}; the latter is an extension introduced in \cite{NRS15}.
See also \cite{SSS15} for a recent review of those objects.
 
We will use some of the properties of the Brownian net 
to define continuous versions of the voter model perturbations, and
show some properties of these continuous objects. 
 We expect that a large class of VMP's in one dimension in both discrete and continuous time settings will converge to (the universal limiting object) CVMP.  Our goal in this paper is to  prove the
 convergence of a particular class of (discrete time) VMP to  CVMP.
  In the spirit of \cite{CDP11},
we hope that our results will provide some insights into
a large class of interacting particle systems which are close enough to
the voter model in dimension one.

\subsection{Decomposition. Bulk and Boundary Nucleations.}
\label{decomposition-1}

In the present work, we 
are interested in the discrete time version
of VMP considered in \cite{CDP11}.
We consider interacting particle systems
on $\Z$ such that the color at each site $x$, 
is updated at time $t+1$
according to some transition probability on $\{1,\cdots,q\}$,
that we denote $P_{x,\eta}^\eps$ (transition probability at site $x$
given a configuration $\eta$
at time $t$).
We describe below a large class of models of this type. While  our 
results
are for a more limited subclass, we believe our approach 
is applicable to the more general class. This would require 
proving convergence of the discrete net with more general jump kernels and
branching mechanisms to the Brownian net. (Such improved convergence
results are under active investigation by one or more groups
of researchers. See also Section \ref{perspective} for more discussion on this topic.)

In our particular context, we
consider a family of such models
parametrized by a number $\eps$,
i.e.  a family of models with corresponding transition probabilities
$\{P_{x,\eta}^\eps\}_{\eps>0}$ and such
that $P_{x,\eta}^\eps$ is close to the transition probabilities
of a discrete-time voter model when $\eps$ is small, i.e.  
$$
\forall i \in\{1,\cdots,q\}, \ \ \lim_{\eps\daw0} P_{x,\eta}^\eps(i) =  f_{x,\eta}(i)
$$
where 
\be\label{voter-f}
f_{x,\eta}(i) : = \sum_{y\in\Z} K(x-y)  1_{\eta(y)=i}
\ee
for a given zero mean transition kernel $K$. For many applications (see \cite{CDP11} and the Appendix where we investigate the spatial
Lotka-Volterra model, the stochastic Potts model  and  the noisy biased voter model),
one can 
decompose the perturbations part of our models
into two parts : a first part that we call the boundary noise
and a second part called the bulk noise.
The bulk noise simply 
denotes a probability distribution
on $\{1,\cdots,q\}$ --- independent of $x$ and the present configuration $\eta$
of the system. 
The boundary noise 
requires a little more explanation. Informally, one can think of 
it
as a transition probability
$B_{x,\eta}$ 
which is only non-trivial if there is some local impurity
around $x$ --- hence the terminology
of boundary noise. More formally, a boundary noise
can always be written under the form
\begin{equation}\label{B_eps}
B_{x,\eta}(i)  \ = \ \E^Q\left( g_{\eta(x+Y_1),\cdots,\eta(x+Y_N)}(i)\right)
\end{equation}
where (1) $N$ is a fixed integer, (2) for every n-uplet of colors $(c_1,\cdots,c_N)$, the quantity $g_{c_1,\cdots,c_N}$ is a probability  
distribution on $\{1,\cdots,q\}$ with $g_{c,\cdots,c}=\delta_{c}$, (3) $Q$ is a probability ditribution
on $\Z^N$. In words, 
$B_{x,\eta}$ chooses $N$ neighbors at random according to 
the probability distribution $Q$, and will align to its neighbors color if
the coloring is uniform.

The reason for distinguishing
between bulk and boundary noise 
is that in many applications,
we shall see in the Appendix that $P_{x,\eta}^\eps$
can be decomposed into three parts, i.e., 
\beqn
P_{x,\eta}^\eps  
& =   w_\eps \underbrace{ f^\eps_{x,\eta}}_{ \mbox{voter noise}} 
+ \  b_\eps \underbrace{B^\eps_{x,\eta} }_{\mbox{boundary noise} }  + \ \kappa_\eps \underbrace{p^\eps}_{\mbox{bulk noise}},
\label{voter-eps}\eeqn
with $w_\eps,b_\eps,\kappa_\eps$ being three non-negative numbers
adding up to $1$ and such that for every $x,\eta$ (1) each part 
of the decomposition defines a probability measure on $\{1,\cdots,q\}$,
(2)  $f^\eps_{x,\eta} =  \left(f_{x,\eta} + O(\kappa_\eps^2,b_\eps^3)\right)$ where $f_{x,\eta}$ is defined in (\ref{voter-f})
(3)  $B^{\eps}_{x,\eta}$ is a boundary noise converging
to a limiting boundary noise $B_{x,\eta}$,
(4) $p^\eps$ is a bulk noise converging
to a limiting bulk noise $p$.   
We  note that the case $b_\eps=0$ (no boundary nucleation) has been treated in \cite{FINR05}, and that it turns out to be considerably 
simpler than the general case treated in this paper.

Given the distributions described in the previous paragraph,
the color transitions are then most easily understood via a
two-step procedure with the first step determining which of three possible moves
to implement at the particular $(x,t)$ under consideration --- a {\it walk\/}
move, a {\it kill\/} move or a {\it branch\/} move with respective probabilities $w_\eps,\kappa_\eps,b_\eps$ --- and the second step
determining the possible change from the initial color to some new color. 
This terminology, which
may at present sound rather mysterious, is taken from the Branching-Coalescing-Killing 
random walks model,
which will be seen to be dual to the voter model perturbation. 
Once the type of move has been
chosen, here are the rules for the second step.
\bi
\item Walk: choose a color according to the probability distribution
$f^\eps_{x,\eta}$. In other words, with high probability,
choose the color of one of your neighbors chosen at random according to the kernel $K(x,\cdot)$. 
\item Kill: choose a color according to the probability distribution $p^\eps$.
\item Branch: first pick $(Y_1,\cdots,Y_N)$ according to the law $Q$, and then choose a color according to the distribution $g_{\eta(x+Y_1),\cdots,\eta(x+Y_N)}$.
\ei

\subsection{Simple VMP and convergence to the continuum VMP}
\label{cv-nn}
This work is a first step towards the understanding of the scaling limit of systems
of type (\ref{voter-eps})
when $\eps\downarrow0$.  We  will restrict ourselves 
to the case of nearest neighbor voter model perturbation.
By that, we mean that the distributions corresponding
to the voter and boundary noise
$f_{x,\eta}^\eps$ and $B_{x,\eta}^\eps$ are solely
determined by the states $\{\eta(x-1),\eta(x+1)\}$. 
Furthermore, we will assume that 
$$
f_{x,\eta} \ = \ f^\eps_{x,\eta}   \ = \  \half(\delta_{\eta(x-1)}+\delta_{\eta(x+1)}).
$$
Regarding the boundary noise,
we will assume the existence
of
a family of probability distributions $\{g_{i,j}^\eps\}_{i,j\leq q}$ on $\{1,\cdots,q\}$,
each distribution being
indexed by two elements on  $\{1,\cdots,q\}$ such that 
$$
B^\eps_{x,\eta} = g_{\eta(x-1),\eta(x+1)}^\eps
$$
with $g_{i,i}^\eps=\delta_i$. 
 Furthermore,
we will assume the existence of a limiting family 
of measures $\{g_{i,j}\}$ and $p$ such that $g_{i,j}^n := g_{i,j}^{\eps_n} \rightarrow g_{i,j}$ and 
$p^n:=p^{\eps_n} \rightarrow p$,
as the sequence  $\eps_n \downarrow0$. 
%One can directly check
%that the stochastic Potts model
%described in Section \ref{Potts} and the biased noisy
%voter model in Section \ref{Biased-voter} 
%in the nearest neighbor case (i.e. for $K(0,y)=\half(\delta_{-1}(y)+\delta_1(y))$),
%define a nearest neighbor voter model perturbation. 
In the following, 
such a model with be refered to as a simple Voter Model Perturbation (VMP)
with boundary noise $\{g^\eps_{i,j}\}_{i,j}$ and
bulk noise $p^\eps$.

We note that in this particular case, the transition rates only depend
on the states of the two nearest neighbors, but not on
the current state of the site under consideration. 
As we shall see, this hypothesis is made 
mostly for technical reasons and is done solely
to take advantage of the known convergence result
of coalescing-branching discrete (time and space)
random walks to the Brownian net; such a 
result for the continuous time  or non-nearest neighbor version
has still to be established. 
The continuous time setting, together
with the study of more general voter model perturbations 
 in one dimension (including
the non-nearest neighbor case and more general boundary nucleation
mechanism)
will be the subject of future work.

Before stating our main theorem, we recall that
the dynamics we consider are all nearest-neighbor in that the update
probabilities of $\eta_t(x)$
at a particular lattice site $x$ depend only on $(\eta_t(x-1),\eta_t(x+1))$.
This property (which is
also the case for the standard nearest neighbor voter model) implies that the Markov chain
is reducible with complete independence between $\{\eta_t(x)\}$ on the 
even ($\Z^2_{even}=\{(x,t): x+t \, {\mbox {is even} }\}$) and odd 
($\Z^2_{odd}=\{(x,t): x+t \, {\mbox {is odd} }\}$) subsets
of discrete space-time. Thus one may restrict attention to one of these
irreducible components.

Given a realization of the odd component of the 
voter model perturbation with
parameter $\eps_n$, we
define
$\theta^{\eps}(x,t)$ to be the color
of site $(x,t)$ at time $t$. In particular,
$\theta^{\eps}(\cdot)$ defines  a 
random mapping 
from $\Z_{odd}^2 \cap \{(x,t)\in\Z^2 \ : \ t\geq0 \}$   
to $\{1,\cdots,q\}$. 
Analogously, 
we will define the CVMP as a mapping from 
$\R\times\R^{+}$ to the set of subsets of
$\{1,\cdots,q\}$,
allowing one space-time point to take several colors.
The next theorem enumerates some of the properties of the
CVMP. 
The precise construction and description will be carried out in Section \ref{coloring}.

\bteo\label{teo1}
Let $\lambda,\{g_{i,j}\},p$ be a family of probability distributions on $\{1,\cdots,q\}$, with $g_{i,i}=\delta_i$.  
Let $b,\kappa \geq0$.
There exists a random mapping 
$(x,t)\raw \theta(x,t)$ from $\R\times\R^{+}$
to subsets of $\{1,\cdots,q\}$, 
with
the following properties :
\begin{enumerate}
\ii For every deterministic $(x,t)$, $|\theta(x,t)|=1$  a.s.. 
\ii (Coarsening) For every deterministic $t$, $|\theta(x,t)|\leq 2$ a.s.. Moreover,
the set
 $\{x \ : \ |\theta_{(x,t)}|=2\}$ is locally finite 
and  partitions
the line into intervals of uniform color, i.e.
the color of $x\to\theta(x,t)$ between two consecutive points of this set
remains constant. 
\ii (Scaling Limit) 
Let $\{\eps_n\}$ be a sequence of positive real numbers converging to $0$.
Let $\theta^{n}(\cdot,\cdot):=\theta^{\eps_n}(\cdot,\cdot)$ be the simple voter model perturbation
characterized by the boundary noise and bulk noise $(\{g_{i,j}^n\}_{1\leq i, j\leq q},p^n)$,
with
branching and killing parameters $(b_n,\kappa_n)$ such that
\begin{enumerate} 
\item[(i)] there exists 
$b,\kappa\geq0$ such that 
$$
b_n / \eps_n \to b, \ \ \mbox{and } \kappa_n / \eps_n^{2} \to \kappa \ \  \mbox{as $n\to\infty$,}
$$
  \item[(ii)] there exists $(\{g_{i,j}\}_{1\leq i, j\leq q},p)$
a family of probability distributions on $\{1,\cdots,q\}$ such that 
$$
\forall i,j, \ g_{i,j}^{n} \to g_{i,j},  \ \ \mbox{and } \ \ p^n \rightarrow p \ \ \mbox{ as $n\to \infty$},
$$
\item[(iii)] the initial distribution of the particle system is given
by a product measure $\nu$    (not depending on $n$)    with
one-dimensional marginal given by $\lambda$.
\end{enumerate}
For every $(x,t)\in\R^2$, let $S_\eps((x,t)):= (\eps x, \eps^2 t)$. 
   For $i=1,\cdots,k$, let $z^i$ be a deterministic point in $\R^2$ and
let $\{z^i_n\}_{n\geq1}$ be a deterministic sequence in $S_{\eps_n}(\Z_{odd}^2)\cap\{(x,t): t>0\}$
such that
$
 \ \ \lim_{n\to\infty} \ 
z^i_n \ = \  z^i.
$
Then
\beq
\label{conv;colors}
\P\left( \ \theta^{n}(  z^1_n ) ,\cdots, \theta^n (z^k_n) \in\cdot \right) 
\to \P\left(  \theta( z^1),\cdots, \theta(z^k)   \in\cdot \right).
\eeq
\end{enumerate}
\eteo

\brm
In the Appendix, we show that the stochastics Potts model in one-dimension at low temperature -- i.e., high $\beta$ -- is a VMP. Further,
we shall see that the scaling in Theorem \ref{teo1}(3)(i) emerges naturally in this setting. See Remark \ref{rmk2} in the Appendix.
\erm

\subsection{Duality to branching-coalescing-killling random walks}
\label{coloring-algo}

%As we shall see,
%our particular choice for the dynamics
%will be dual to a family of branching-coalescing-killing
%random walks (or BCK) 
%In this section,we first construct
%a family of random walks
%of branching-coalescing-killing
%random walks (or BCK) 
%and then describe 
%a coloring algorithm
%which will turn out to describe 
%the duality relation between
%branching-coalescing-killing random walks
%and our particular choice for the dynamics.

In this section,
we start by introducing a
percolation model on $\Z_{odd}^2$ (already introduced in \cite{MNR13}) and we
show
how a random 
coloring algorithm of this percolation
configuration
is dual to simple voter model perturbations.

{\bf An oriented percolation model.}
Let $b,\kappa \geq0$ with $b+\kappa \leq 1$.  
Each site $v=(x,t)\in  \Z_{odd}^2$ --- where $x$ is interpreted as a space coordinate
and $t$ as a time coordinate ---  
has two nearest neighbors
with higher time coordinates
--- $v_r=(x+1,t+1)$ and $v_l=(x-1,t+1)$ ---
 and 
$v$
is randomly (and independently for different $v$'s)
connected 
to a subset of its neighbors
$v_r$ and $v_l$
according
to the following distribution.
\bi
\item With probability $\fr{(1-b-\kappa)}{2}$, draw the
 arrow
$(v \raw v_l)$ (i.e. starting at $v$ and anding at $v_l$); 
\item with probability $\fr{(1-b-\kappa)}{2}$
draw the arrow 
 $(v\raw v_r)$;  
\item with probability $b$, draw the
two arrows $(v\raw v_r)$ and $(v\raw v_l)$;
\item finally do not draw any arrow with 
probability $\kappa$. See Fig. \ref{Tux}.
\ei
If we denote by $\cE^{b,\kappa}$ the resulting random
arrow configuration,
the random graph $\cG^{b,\kappa}=(\Z_{odd}^2,\cE^{b,\kappa})$
defines a certain type of
$1+1$ dimensional percolation model
oriented forward in the $t$-direction.
In this percolation model,
the vertices 
with two outgoing arrows will
be referred to as branching points
whereas points with no arrow 
will be referred to as killing points.

By definition, a path $\pi$ along $\cE^{b,\kappa}$
will denote
a path starting from any site of $\Z_{odd}^2$
and following the random arrow configuration
until getting killed or reaching $\infty$. More precisely, 
a path $\pi$ is the graph of a function
defined on an  interval in $\R$
of the form
  $[\sigma_\pi,e_\pi]$, with $\sigma_\pi, e_\pi\in\Z\cup\{\infty\}$   such that
$e_{\pi}=\infty$ or
$(\pi(e_\pi),e_\pi)$ is a killing
point, for every $t\in[\sigma_\pi,e_\pi)$,
$(\pi(t),t)$ connects to $(\pi(t+1),t+1)$
and $\pi$ is linear between $t$ and $t+1$.

Considering the set of all
the paths along  $\cE^{b,\kappa}$,
one generates
an infinite family --- denoted by $\cU^{b,\kappa}$ ---
that can loosely be described
as a collection of graphs of 
one dimensional coalescing
simple random walks, that branch
with probability $b$ and get killed with probability $\kappa$.
Roughly speaking,
a walk at space-time site
$v$ can create two new walks
(starting respectively at $v_l$
and $v_r$)
with probability $b$ and can be killed
with probability $\kappa$; two walks 
move independently when they are apart
but become perfectly correlated (i.e., they coalesce)
upon meeting at a space-time point. 
In the following, $\cU^{b,\kappa}$
will be referred to
as a system of branching-coalescing-killing random
walks (or in short, BCK) with
parameters $(b,\kappa)$, or equivalently, and in analogy with their continuum counterpart, 
as a discrete net with killing (in analogy with
the Brownian net with killing as introduced in \cite{NRS15}).

\bigskip

{\bf Backward discrete net.} Our percolation model is oriented forward in time. 
We define the backward BCK --- denoted by $\hat \cU^{b,\kappa}$ ---   as the backward in time 
percolation model obtained from the BCK
by a reflection about the $x$-axis. See Fig. \ref{Tux2}.

\begin{figure}[h!]
\begin{minipage}[b]{0.5\linewidth}
\centering
\includegraphics[scale=.3]{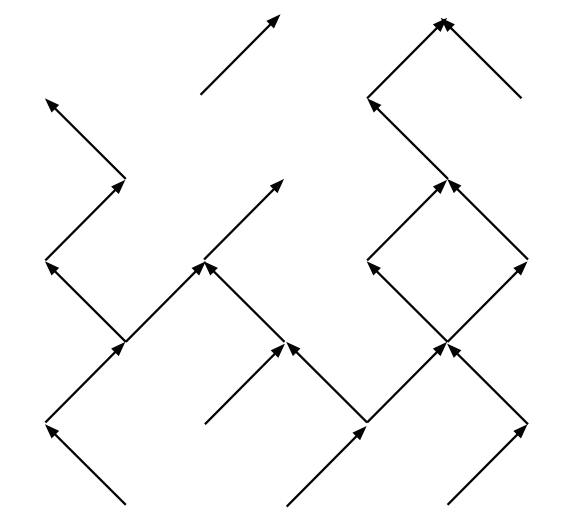}
\caption{Forward discrete net with killing}
\label{Tux}
\end{minipage}
\hspace{0.5cm}
\begin{minipage}[b]{0.5\linewidth}
\centering
\includegraphics[scale=.3]{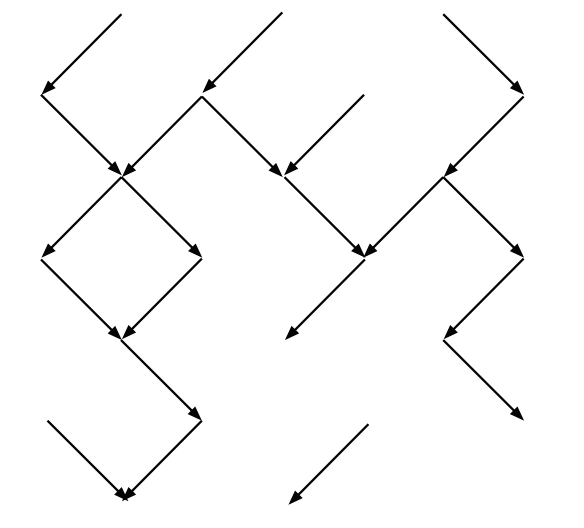}
\caption{Backward discrete net with killing}
\label{Tux2}
\end{minipage}
\end{figure}
\bigskip

{\bf A coloring algorithm.} We now describe how
simple voter model perturbations 
(as described in Section \ref{cv-nn}) are exactly dual to the backward BCK (i.e., $\hat \cU^{b,\kappa}$). 
%$W_{i,j}^\eps = \frac{1}{2}(\delta_1+ \delta_2)$ (no higher order term
%for the walk part of the voter model perturbation). 
%(One of our task will consist in
%showing that for small $\eps$,
%only the lower order terms of $W_{i,j}^\eps$ 
%contribute to the distribution of the
%coloring in later ``macroscopic'' times.)
To describe this duality relation, we consider 
the subgraph of $\hat {\cal G}^{b,\kappa}$ (the backward percolation model) 
whose set of vertices is given by ${\mathbb Z}_{odd}^2\cap\{(x,t) \  : \ t\geq0\}$. Then, 
we equip each vertex $z\in {\mathbb Z}_{odd}^2\cap\{(x,t) \  : \ t\geq0\}$
 with an independent
uniform (on $[0,1]$) random variable $U_z$.
%The color of any $n$-uplet $(z_1,\cdots,z_n)$ is determined by considering
%the component
%of our (backward) oriented percolation model
%originated from those points and restricted to the upper half plane. 
%This defines a certain acyclic directed graph $\hat G=(\hat V, \hat E)$
%whose vertices have out-degree 
%at most $2$.
%The roots of the graph are given by 
%$(z^1,\cdots,z^k)$,
%whereas the leaves are either killing points
%or points on the line $\{t=0\}$.

We start by coloring the leaves
as follows (see Fig. \ref{config4}). The color assigned to  a point $z$ with  time coordinate equal to $0$ ---  denoted by $\theta^\eps(z)$ --- 
is the only integer 
in $\{1,\cdots,q\}$
such that
$$
U_z\in[\sum_{j\leq \theta^\eps(z)-1} \lambda(j), \sum_{j\leq \theta^\eps(z)}\lambda(j)).
$$
(With the convention that $\lambda(0)=0$.)
The color assigned to  a killing point is the integer in $\{1,\cdots,q\}$
such that
$$
U_z\in[\sum_{j\leq \theta^\eps(z)-1} p^\eps(j), \sum_{j\leq \theta^\eps(z)} p^\eps(j))
$$
where we recall that $p^\eps(\cdot)$ is the probability distribution determining the transition
when a bulk nucleation occurs (with the convention that $p^\eps(0)=0$).

For every other vertex $z\in \Z_{odd}^2\cap\{(x,t)\ : \ t\geq0\}$,
we consider 
the component
of our (backward) oriented percolation model
originated from $z$ and restricted to the upper half plane. 
This defines a certain acyclic directed graph $\hat G^z=(\hat V^z, \hat E^z)$
whose vertices have out-degree 
at most $2$  -- see Fig. \ref{config5}.
For every point in $\hat V^z$, we assign a color to each nodes sequentially 
from the leafs to
the root by applying the following algorithm, where 
$\hat V_n^z$ will denote the set of color-assigned vertices at step $n$
of the algorithm.
\bi
\ii {\bf Step 1} $\hat V_1^z$ is the set of leaves, whose colors have already been assigned. 
\ii {\bf Step n$>1$} If $\hat V^z\setminus \hat V_n^z=\emptyset$ stop. 
Otherwise,
pick a vertex $z$ in $\hat V_n^z$ such
that $z$ is connected to $\hat V_n^z$, i.e., all its nearest (directed) neighbor(s) belong
to $\hat V_n^z$. 
Next, if $z$ connects to a single vertex or
if the color of its two neighbors match, assign to it the color of its neighbor(s).
If the colors of the two neighbors disagree,
with two colors $k\neq l$,
then the site is assigned
  a color $i$   
such that 
\be\label{fghj}
U_z\in[\sum_{i\leq\theta^\eps(z)-1}g^\eps_{k,l}(i),\sum_{i\leq\theta^\eps(z)}g^\eps_{k,l}(i+1)).
\ee
\ei
See Figs \ref{config5}-\ref{config666} for a concrete example in the case $q=3$ (white, grey, black corresponding respectively to $1,2,3$)
and $p^\eps, \lambda$, and $g_{i,j}^\eps$ for $i\neq j$  
are uniform distributions on $\{1,2,3\}$. We note that
this specific choice  for the bulk and boundary noises correspond to the discrete time stochastic
Potts model considered in the Appendix (see Section \ref{Potts-sect}).
\brm
At step $n$ of the previous algorithm, there can be multiple $z'\in V^z\setminus V_{n}^z$, such that all of its (directed) neighbors are in 
$\hat V_{n}^z$. (In Fig \ref{config555}, there are two such vertices -- one connecting to the top white circle and one connecting to the top black circle).
However, we let convince herself that the final coloring of the graph is independent of the choice of $z'$ at step $n$.
\erm

\brm
Our
coloring is consistent in the sense that if $z'\in \hat V^z$, the color
assigned to $z'$ in the coloring algorithm applied to $\hat V^z$
is identical to $\theta^\eps(z')$ -- i.e., the coloring algorithm applied to $\hat V^{z'}$.
\erm

\begin{figure}[h!]
\begin{minipage}[t]{0.4\linewidth}
\centering
\includegraphics[width=\linewidth]{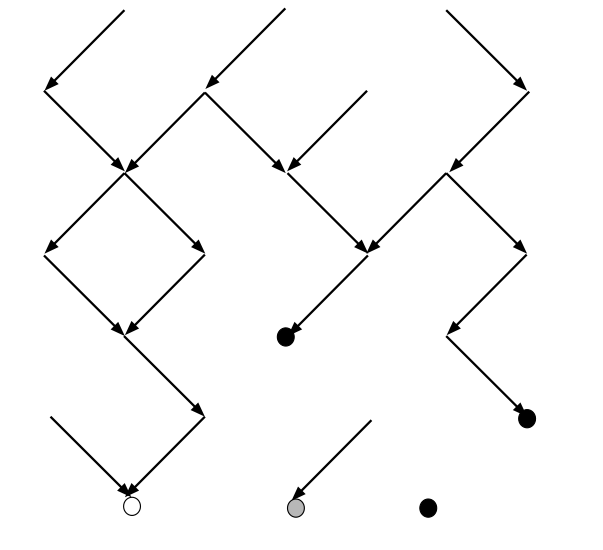}
\caption{Realization of the coloring ($q=3$ colors (white, grey, black)) 
for the leaves of the backward percolation model restricted to the upper half plane.}
\label{config4}
\end{minipage} \ \ \ \ \ \ \ 
\begin{minipage}[t]{0.45\linewidth}
\centering
\includegraphics[width=\linewidth]{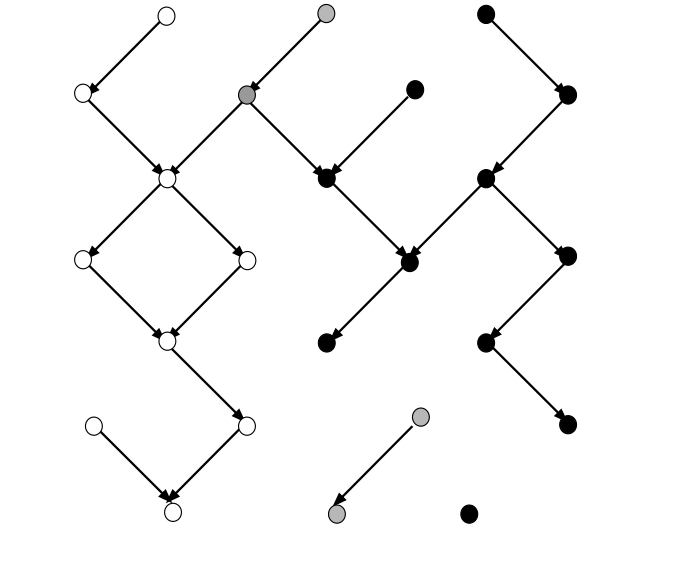}
\caption{Final coloring of a section of $\Z^2_{odd}\cap \{(x,t) : t\geq0\}$ after applying the coloring algorithm to every vertex.}
\label{config444}
\end{minipage}
\end{figure}

\begin{figure}[h!]
\begin{minipage}[t]{0.4\linewidth}
\centering
\includegraphics[width=\linewidth]{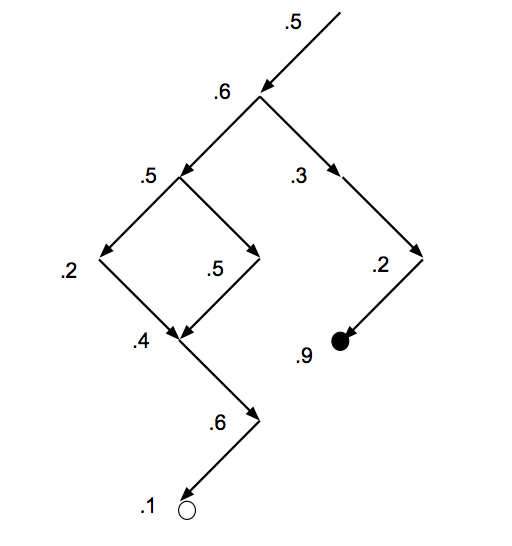}
\caption{subgraph originated from the top-middle point of Fig. \ref{config4}
decorated with fictive variables $U_z$.}
\label{config5}
\end{minipage}  \ \ \ \ \ \ \ 
\begin{minipage}[t]{0.4\linewidth}
\centering
\includegraphics[width=\linewidth]{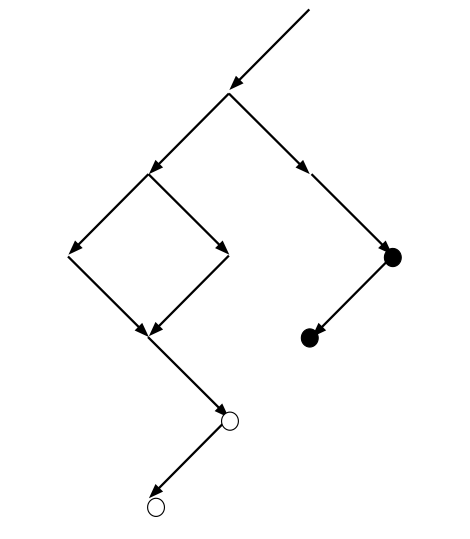}
\caption{Coloring at Step 3 of the algorithm.}
\label{config555}
\end{minipage}
\\
\begin{minipage}[t]{0.4\linewidth}
\centering
\includegraphics[width=\linewidth]{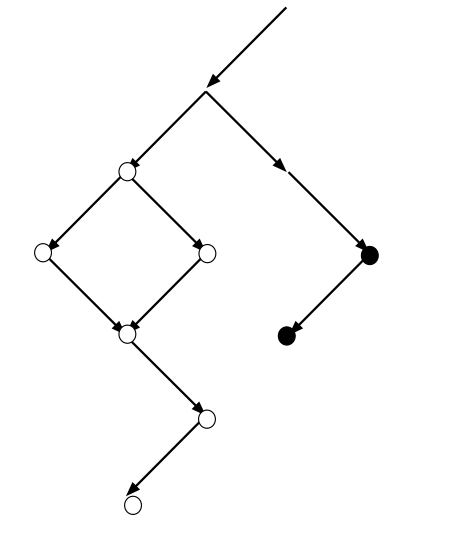}
\caption{Coloring at Step 7 of the algorithm.}
\label{config66}
\end{minipage} \ \ \ \ \ \ \ 
\begin{minipage}[t]{0.4\linewidth}
\centering
\includegraphics[width=\linewidth]{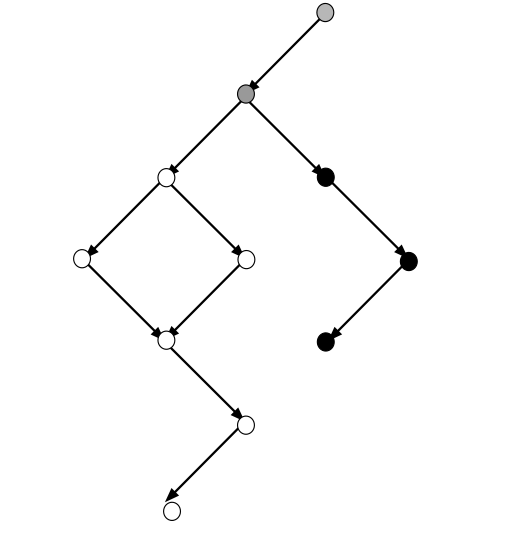}
\caption{Final coloring of the subgraph.}
\label{config666}
\end{minipage}
\end{figure}

\bprop
\label{pottsdual}
Let us consider the simple VMP
with characteristic distributions $(\{\frac{1}{2}(\delta_i+\delta_j)\}, \{g^\eps_{i,j}\}, p^\eps)$,
branching-killing parameters $b_\eps,\kappa_\eps\geq 0$, 
starting with an initial  $\nu$-coloring of the line, where $\nu$ 
is   an i.i.d product probability measure    on $\{1,\cdots,q \}^\Z$.

For every $z^1,\cdots,z^k\in\Z_{odd}^2$, the n-tuple $(\theta^\eps(z^1),\cdots,\theta^\eps(z^k))$, as described in the coloring algorithm above, coincides with the
$k$-dimensional distribution of this VMP with initial distribution $\nu$.
\eprop

\begin{proof}
Since coalescing random walks are the dual of the standard voter model \cite{HL75} \cite{L04}, it is clear that in the absence of killing and branching the color genealogy of the VMP is given by coalescing random walks . At a bulk nucleation site (killing point) a color independent of the previous color evolution is assigned to the site. This leads to the conclusion that if a path in the color genealogy evolution hits a bulk nucleation point then it should be killed at that point. The color at a boundary nucleation point is determined by the colors of the two adjacent sites. The colors of these two sites are determined by following the color genealogy evolution starting at these sites until reaching time zero or hitting a killing point as given by the dual percolation model starting at these two sites. The uniform random variables at the bulk and boundary nucleation sites along with the rules for determining the color at these sites given earlier ensure that the colors at these sites are chosen with the distribution $p^\epsilon$ and $g_{k,l}^\epsilon$ respectively.
\end{proof}

\subsection{Scaling limit}
\label{scaling-limnit}

In \cite{NRS15}, we introduced the
Brownian net with killing (or $\Net^{b,\kappa}$ 
where $b$ and $\kappa$
are continuum branching and killing parameters)
that was shown to emerge as the scaling limit of a BCK system
with small branching and killing parameters (see Theorem \ref{onewebteo} below for a precise statement).

In light of the previous section, 
it is natural to construct the CVPM 
as a process
dual
to the Brownian net with killing; the duality
relation between the two models 
being described in terms
of the random coloring of
the ``continuum graphs''
induced by the Brownian net.

The main difficulty in defining
the CVMP
lies in the fact that
the set of vertices of this ``continuum graph''
is dense in $\R\times\R^+$.
In order to deal with this 
extra difficulty, we
now discuss some 
remarkable features of the coloring algorithm
described in the previous section.

\bigskip

\bdf[Relevant Separation Point and Reduced Graph -- Fig. \ref{config7}-\ref{config8}]
\label{reduced-graph-df}
Let $G=(V,E)$ be a finite acyclic oriented graph. 
We will say that an intermediate node $z$ (i.e. a node which is neither a root nor a leaf)
of the graph is relevant iff there exists
two paths of $G$    passing through $z$ and reaching 
 the leaves such that
they do not meet between $z$ and the leaves. Any other intermediary point 
will be called irrelevant.

We define the reduced version of 
$G$, denoted by $\tilde G=(\tilde V,\tilde E)$, as the oriented graph of $G$ obtained 
after ``skipping" all irrelevant
points in $V$. More precisely,
the vertices
of $\tilde V$ are obtained from
$V$ by removing
the set of irrelevant separation points and
placing a directed edge between
two points $z,z'$ in $\tilde V$
iff
there exists a path $\pi = (z_1,\cdots,z_n)$ in $G$
with $z_1=z$ and $z_n=z'$
and such that $z_i$ is irrelevant for $i\neq 1,n$.

\edf

\begin{figure}[h!]
\begin{minipage}[b]{0.5\linewidth}
\centering
\includegraphics[scale=.3]{bck22}
\caption{Colored graph $\hat G^z$ originated from a point $z$.}
\label{config7}
\end{minipage}
\hspace{0.5cm}
\begin{minipage}[b]{0.5\linewidth}
\centering
\includegraphics[scale=.3]{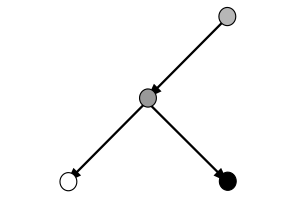}
\caption{Reduced version of the graph presented in the left-panel. }
\label{config8}
\end{minipage}
\end{figure}

\bprop\label{reduced-graph-df2}
Let $G$ be a finite rooted 
acyclic oriented graph whose vertices have at most out degree $2$ 
and let us assume that the vertices $z$ of the graph
are equipped with i.i.d. random variables $\{U_z\}$,
uniformly
distributed on $[0,1]$.
For a given coloring of the leaves,
our coloring algorithm (as described in Section \ref{coloring-algo})
applied separately  to $G$ and to the reduced graph $\tilde G$
induces the same coloring of the verties $\tilde V$ (using the same set of i.i.d. uniform random variables for the reduced graph).
%\ii[(2)] If the graph is simply rooted (i.e., roots have degree 1),
%the final coloring of the roots do not depend 
%on the initial pre-coloring of the roots.
%\ei
\eprop
\begin{proof}
If a vertex $z$ connects to two edges 
with the same color, then $z$ must align to this color (by definition of the boundary nucleation).
The same holds if $z$ only connects to a single point.
It is then easy to see that any path in the graph $G$ that does not contain any relevant separation points  
is uniformly colored,
and as consequence, in our coloring algorithm, irrelevant separation points can be ``skipped'' to deduce the color of the vertices $\tilde V$ (see  Figs \ref{config5}-\ref{config666} for a concrete example). This will be achieved in Section \ref{coloring}.

\end{proof}

%\begin{proof}
%Suppose $z$ and $z'$ are two vertices in $\tilde V$ (as defined in Definition \ref{spit-red}) and that there exists a path $\pi =(z = z_1, z_2,\cdots, z_n = z')$ in $G$  %such that $(x_i,t_i) = z_i \notin \tilde V$ if $2 \leq i \leq n-1$.   Let  $\pi_1$ be a path starting from $z_i$ (that does not touch $z_{i+1}, \cdots, z_n)$ for some $2 %\leq i \leq n-1$ .  If $\pi_1$ meets a continuation of  $\pi$ at some vertex $w$  after $\pi$ passes through $z_n$ then we can find another path $\pi_2$ that begins with $(z_i,z_{i+1}, \cdots, z_n)$ and follows a second continuation of $\pi$ that does not touch $w$. Then $\pi_1$ and $\pi_2$ both start at $z_i$ and they reach their respective leaves without meeting. This would imply that $z_i \in \tilde V$ which would be a contradiction.
%Therefore  $\pi$ and $\pi_1$ meet at $z_k$ for some $i < k \leq n$. Using the coloring algorithm we conclude that color at $z_i$ and $z_n$ are the same. This implies that the reduced graph and the original graph assign the same color to the  vertices of $\tilde V$.

%\end{proof}

Significantly, 
even if the graph induced by the Brownian net paths starting from
a point
is infinite, its reduced version is finite (see Fig. \ref{reduced-graph} below).
This will allow
us to apply our coloring
algorithm
at the continuum level and 
to define the CVMP
by
imitating the duality 
relation described in Section \ref{coloring-algo}.. 

\subsection{Perspectives}
\label{perspective}

The discrete VMP's considered in Section \ref{cv-nn} and Theorem \ref{teo1}
are dual to a system of coalescing-branching nearest neighbor random walks,
that are known to converge (under proper rescaling)
to the Brownian net with killing \cite{NRS15}.

Let us now   reconsider the discrete VMP alluded to    in Section \ref{decomposition-1}, 
and whose transition probabilities are
defined in (\ref{voter-eps}). As for the nearest neighbor case, at least when when $f^\eps_{x,\eta}=f_{x,\eta}$, it is not hard to see that such systems are dual 
to branching-coalescing random walks. However, the transition probabilities of such walks 
are more complex: (1) walks move according to the transition kernel $K$  as described in (\ref{voter-f}); (2) they are killed with probability $\kappa_\eps$; and finally (3) they branch with probability $b_\eps$, and 
upon branching, the location of the $N$ new particles are 
chosen according $Q$ (shifted by the current location of the particle under consideration).

It is plausible that (under some restrictions on the moments of the kernels $K$ and $Q$) such systems of particles also converge 
to the Brownian net with killing. If so, the limiting object alluded to in Theorem \ref{teo1} -- and whose construction will be carried in Section \ref{coloring} --
should also be the scaling limit of a large class of discrete VMP's. However, it remains unclear what should be the limiting 
branching and killing mechanism at the continuum. Both the limit of generalized branching-coalescing random walks, and their relation 
to the CVMP are challenging questions, and would certainly require    techniques    that go beyond the scope of this paper.

Finally, it would be natural to also consider the continuous time analog of the models alluded above (as in \cite{CDP11} for higher dimensions). 
In particular, some recent work by Etheridge, Freeman and  Straulino \cite{EFS15} where they considered the paths generated by the genealogy of a spatial Fleming-Viot process in one dimension could be relevant to make progress in this direction.  

\subsection{Outline of the rest of the paper}

The rest of the paper is organized as follows.
In  Section \ref{BNK}, we recall the definition of the scaling limit of the BCK:
the Brownian net with killing introduced in \cite{NRS15},
which can be loosely described as an 
infinite family of one dimensional coalescing-branching-killing Brownian motions.
In Section \ref{coloring}, this 
is used to construct the continuum 
voter model perturbation (CVMP) by mimicking at the continuum level
the discrete 
duality relation of Section \ref{coloring-algo}. Properties
(1)-(2) of Theorem
\ref{teo1} will be shown in Proposition \ref{prop1-3}.
Finally, we show the convergence result of Theorem \ref{teo1} (Property (3))
in Section \ref{Invariance::Principle}.

\section{The Brownian net with killing}
\label{BNK}

In Proposition \ref{pottsdual},
the nearest-neighbors discrete voter model perturbations
were seen to be dual to an infinite family
of $BCK$ random walks. Hence, 
before defining a continuous analog
of the voter model perturbation,
it is natural to start with the construction
of the BCK scaling limit : the Brownian net with killing (as introduced in \cite{NRS15}),
denoted by $\Net^{b,\kappa}$ --- where $b$ and $\kappa$
are two non-negative numbers playing a role analogous to
the branching and killing parameters in the discrete setting.

\subsection{The space $({\cal H}, d_{\cal H})$}
\label{the-space}

As in \cite{FINR04}, we will define the Brownian net with killing 
as a random compact set of paths. In this section, we
briefly 
outline the construction of the space of compact sets.
For more details, the interested reader may refer 
to \cite{FINR04} and \cite{NRS15}.

First define $(\bar\R^2,\rho)$ to be 
the compactification of $\R^2$
with
$$
\rho((x_1,t_1),(x_2,t_2)) \ = \ | \phi(x_1,t_1) - \phi(x_2,t_2)| \vee |\psi(t_1)-\psi(t_2)|,
$$
where
$$
\phi(x,t) = \frac{\tanh(x)}{1+|t|}  \ \ \ \mbox{and}  \ \ \  \psi(t) = \tanh(t).
$$
In particular, we note that the mapping
$(x,t)\rightarrow(\phi(x,t),\psi(t))$
maps $\bar\R^2$ onto a compact subset of $\R^2$.

Next, let $C[t_0,t_1]$
denote the set of continuous functions from 
$[t_0,t_1]$ to $[-\infty,+\infty]$. From there, we define the set 
of continuous paths in $\R^2$ (with a prescribed
starting and ending point) as
$$
\Pi := \cup_{t_0\leq t_1} C[t_0,t_1]\times \{t_0\}\times\{t_1\}.
$$
Finally, 
we equip this set of paths with a metric
$d$, defined as the
maximum of the sup norm of the distance between
two paths, the distance between their respective starting points and 
the distance between their ending
points. (In particular, when no killing occurs,
as in the forward Brownian web, the ending
point of each path is $\{\infty\}$).
More precisely,
if for any path $\pi$,
we denote by $\sigma_\pi$
the starting time of $\pi$
and by $e_\pi$
its ending time,
we have
\beqnn\label{diistance}
d(\pi_1,\pi_2) \ = \
 | \psi(\sigma_{\pi_1}) - \psi(\sigma_{\pi_2})  | 
\vee
| \psi(e_{\pi_1}) - \psi(e_{\pi_2})  | 
\vee 
\max_{t} \  \rho((\bar\pi_1(t),t),(\bar\pi_2(t),t)),
\eeqnn
where $\bar \pi$ is the extension of $\pi$
into a path from $-\infty$ to $+\infty$
by setting
$$
\bar \pi(t) = \left\{ \begin{array}{c} 
\pi(\sigma_\pi) \ \ \mbox{for $t<\sigma_\pi$},  \\
\pi(e_\pi) \ \ \mbox{for $t>e_\pi$.}  \end{array} \right.
$$
  Finally, let ${\cal H}$ denote the set of compact subsets of $(\Pi,d )$  and let $d_{\cal H}$
denote
the
Hausdorff
metric,
\beqnn\label{dh}
d_{\cal H}(K_1,K_2) = \max_{\pi_1\in K_1} \min_{\pi_2\in K_2} d (\pi_1,\pi_2) \vee  
\max_{\pi_2\in K_2} \min_{\pi_1 \in K_1} d (\pi_1,\pi_2).  
\eeqnn   
In \cite{FINR04}, it is proved that $({\cal H},d_{\cal H})$ is Polish. In the following, 
we will construct the Brownian net with killing as a random element of this space.

\subsection{The Brownian web }
\label{Forward_Web}
As mentioned in the introduction, 
the Brownian web (\cite{TW98} \cite{FINR04} \cite{SSS15}) is the scaling limit of the discrete web
under diffusive space-time scaling
and is defined as an element
of $({\cal H},d_{\cal H})$.
The next theorem, taken from~\cite{FINR04}, gives some of the key
properties of the BW.

          \bteo
          \label{teo:char}
          There is an \( ({\cal H},{\cal F}_{{\cal H}}) \)-valued random variable
          \(
          {\W} \)
          whose distribution is uniquely determined by the following three
          properties.
          \begin{itemize}
                    \item[(o)]  from any deterministic point \( (x,t) \) in
          $\r^{2}$,
                    there is almost surely a unique path \( {B}_{(x,t)} \)
          starting
                    from \( (x,t) \).

                    \item[(i)]  for any deterministic, dense countable subset
        \(
          {\cal
                    D} \) of \( \r^{2} \), almost surely, \( {\W} \) is the
          closure in
                    \( (\Pi, d) \) of \( \{ {B}_{(x,t)}: (x,t)\in
                    {\cal D} \}. \)
  \item[(ii)]  for any deterministic  $n$ and \((x_{1}, t_{1}),
        \ldots,
                    (x_{n}, t_{n}) \), the joint distribution of \(
                    {B}_{(x_{1},t_{1})}, \ldots, {B}_{(x_{n},t_{n})} \) is that
                    of coalescing Brownian motions from those starting points (with unit diffusion
        constant).

          \end{itemize}
          \eteo

Note that (i) provides a practical construction of the Brownian web. For $\cal D$ as defined above,  construct coalescing Brownian motion paths starting from $\cal D$. This defines a {\it skeleton} for the Brownian web that is denoted by $\W({\cal D})$. $\W$ is simply defined as the closure of this precompact set of paths.

\subsection{The standard Brownian net} 
\label{standard-net}
\subsubsection{Idea of the construction}

When $\kappa =0$,  Sun and Swart give a construction of the 
Brownian net which is based on the 
construction of two coupled drifted 
Brownian webs $(\Wl,\Wr)$ that interact in a sticky way. For instance,
for every point $(x,t)$, there is a unique pair $(l,r)\in(\Wl,\Wr)$ starting from $(x,t)$, with $l\leq r$, and where
where $l$ is distributed as the path of a Brownian motion with drift $-b$, and $r$
is a Brownian motion with drift $+b$. The difference between the two paths is a Brownian motion
with sticky reflection at $0$
(see \cite{SS07} and a recent review paper \cite{SSS15} for more details).

The Brownian net $\Net^b$
is then constructed by
concatenating 
paths of the right and left webs. 
More precisely, given two paths $\pi_1, \pi_2\in\Pi$, let $\sigma_{\pi_1}$ and $\sigma_{\pi_2}$ be the starting times of those paths.
For $t >
\sigma_{\pi_1} \vee \sigma_{\pi_2}$ (note the strict inequality),
 $t$ is called
an intersection time of $\pi_1$ and $\pi_2$ if $\pi_1(t)=\pi_2(t)$. By
hopping from $\pi_1$ to $\pi_2$, we mean the construction of a new
path by concatenating together the piece of $\pi_1$ before and the
piece of $\pi_2$ after an intersection time. Given the left-right
Brownian web $(\Wl, \Wr)$, let $H(\Wl\cup\Wr)$ denote the set of paths
constructed by hopping a finite number of times between paths in
$\Wl\bigcup\Wr$.
$\Net^b$
is then defined as the closure
of $H(\Wl\bigcup \Wr)$. After taking this limit,
$\Wl$ and $\Wr$ delimit the net paths from the left and from the right, in the sense that 
paths in $\Net^b$ can not cross any element of $\Wl$ (resp., $\Wr$) from the left (resp., from the right).

Finally, we cite from \cite{SS07} a crucial property of the Brownian net that will be useful for the rest of this paper.

\bprop\label{bc-set}
Let $S<T$ and define the branching-coalescing point set
$$
\xi^S(T) \ := \ \{x\in\R \ : \ \exists \pi\in \Net^{b} \ \mbox{s.t.} \ \sigma_{\pi}=S, \ \pi(T)=x \}.
$$
For almost every realization of the Brownian net, the set $\xi^S(T)$ is locally finite.
\eprop

\subsubsection{Special points of the standard Brownian net}
\label{special-force}

In this section ,
we recall the classification of the special points
of the Brownian net, as described in \cite{SSS08a}. 
As in the Brownian web,
the classification of special points will be based on 
the local geometry of the Brownian net.
Of special interest
to us will be the points with a deterministic
time coordinate.

\bdf[Equivalent Ingoing and Outgoing Paths]
\label{eq:in-out}
Two paths $\pi,\pi'\in(\Pi,d)$
are said to be equivalent paths entering a point $z\in\R^2$, or in short 
$\pi \sim^{z}_{in} \pi' $, iff there exists a sequence $\{t_n\}$
converging to $t$ such that
$t_n<t$ and
$
\pi(t_n) = \pi'(t_n)
$
for every $n$.
Equivalent paths exiting a point $z$, 
denoted by
$\pi \sim^{z}_{out} \pi'$, 
are defined 
analogously by finding a sequence $t_n>t$
with $\{t_n\}$ converging to $t$ and $\pi(t_n)=\pi'(t_n)$. 
\edf

Despite the notation,
these are not in general equivalence relations on the spaces of all
paths entering resp. leaving a point.
However, in \cite{SSS08a},
it is shown that
that
if $(\Wl,\Wr)$ is a left-right Brownian web, then a.s. for all
$z\in\R^2$, the relations $\sim_{in}^z$
and
$\sim_{out}^z$
actually
define equivalence relations on the set of paths in $\Wl \cup \Wr$
entering (resp., leaving) $z$, and the equivalence classes of paths in $\Wl \cup \Wr$
entering (resp., leaving)
$z$ are naturally ordered from left to right. Moreover,
the authors 
gave a complete classification of points
$z\in\R^2$ according to the structure of the equivalence classes in $\Wl \cup \Wr$ entering (resp., leaving)
$z$.

\begin{figure}[h!]
\centering
\includegraphics[scale=.2]{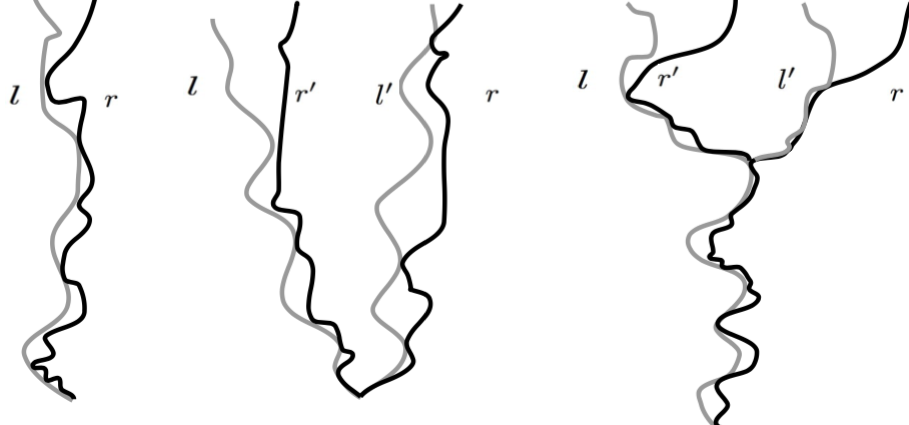}
\caption{Structure of the left (grey) and right (black) webs $\Wl,\Wr$ at $(o,p)$, $(o,pp)$ and $(p,pp)$ points
of the net $\Net^b$. In particular, for $(o,pp)$ and $(p,pp)$ points, there are two equivalent outgoing pairs $(l,r')$ and $(l',r)$ (ordered from left to right) exiting the point $z$.}
\label{1-2}
\end{figure}

In general, such an equivalence class may be of three types. If it contains only
paths in $\Wl$ then we say it is of type $l$, 
if it contains only paths in $\Wr$ then we say it is of
type $r$, and if it contains both paths in $\Wl$ and $\Wr$
then we say it is of type $p$, standing for
pair. To denote the type of a point $z\in\R^2$ in a Brownian net $\Net^b$, 
we list the incoming
equivalence classes in $\Wl \cup \Wr$
from left to right (see Fig. \ref{1-2}), and then, separated by a comma, the outgoing equivalence classes of paths from left to right. 
For example, a point of type $(p,pp)$ (as in  Fig. \ref{1-2}) has exactly one equivalent class of paths entering
the point, and two distinct equivalent classes exiting the point. Further, all of those classes 
contain  at least a path in $\Wr$ and path in $\Wl$. For a point of type $(o,p)$,
there is no path entering the point, and one equivalent class exiting the point. 
We cite the following results from 
Theorems 1.7 and 1.12 in \cite{SSS08a}.

\bprop[Geometry of the Net at Deterministic Times]
\label{special-net}
Let $\Net^b$ be a Brownian net and let $(\Wl,\Wr)$ be the left-right Brownian web associated with $\Net^b$.
Let $t$ be a deterministic time.
\ben
\ii For every deterministic point $x$, the point $(x,t)$ is of type $(o,p)$.
\ii Each point $(x,t)$ (with $x$ deterministic or random)
is either of type
(o, p), (p, p) or (o, pp), 
and all of these types occur.
\ii Every path $\pi\in\Net^b$ starting from the line $\R\times\{t\}$,
is squeezed between an equivalent pair of right-most and left-most paths,
i.e.
there exists $l\in \Wl$ and $r\in \Wr$ so that
$l\sim_{out}^z r$ such that 
$l \leq \pi \leq r$ on $[t,\infty)$.
\ii  Any point $(x,t)$ entered by a path $\pi\in\Net^b$ with $\sigma_\pi<t$
is of type $(p,p)$. Moreover, $\pi$
is squeezed between an equivalent pair of right-most and left-most paths, i.e.
there exist
 $l\in\Wl, r \in \Wr$ with $r\sim_{in}^z l$
and $\eps > 0$ and such that 
$l \leq \pi \leq r$ on $[t - \eps,t]$.
\een
\eprop

In the Brownian net, a separation point
(or branching point)
will refer to a point $z=(x,t)$ such that
there exist two paths
$\pi_r$ and $\pi_l\in\Net^{b}$
with $\sigma_{\pi_l},\sigma_{\pi_r}<t$ 
so that (1) $\pi_l(t)=\pi_r(t)=x$ and (2) there exists 
$\eps>0$
so that $\pi_l(s)<\pi_r(s)$ for every $s\in(t,t+\eps]$.

\bprop[Geometry of the Net at a Separation Point]
\label{geo-net-sp}
If $z=(x,t)$ is a separation point in the standard Brownian net
$\Net^{b}$ then
\ben
\ii $z$ is of the type (p,pp).
\ii Every path $\pi$ starting from $z$  is squeezed between some
pair of equivalent right-most and left-most paths ---
i.e.
there exists $l\in \Wl$ and $r\in \Wr$ (which will depend on $\pi$) with $l\sim_{out}^z r$ such that 
$l \leq \pi \leq r$ on $[t,\infty)$.
\ii Every path $\pi$ entering the point $z$ is squeezed between  a pair of equivalent 
right-most and left-most paths ---
i.e.,
there exist
$l\in\Wl, r \in \Wr$, $l\sim_{in}^z r $,
entering
the point $z$
and $\eps > 0$ such that 
$l \leq \pi \leq r$ on $[t - \eps,t]$. 
\een
\eprop

\subsection{The Brownian net with killing}
\label{onenet}

{\bf Construction.} Having the standard Brownian net at hand,
we now need to turn on the killing
mechanism. To do that, we recall the definition of
the time length measure of the Brownian net as introduced in \cite{NRS15}.

\bdf[Time Length Measure of the Brownian Net]
\label{tlm-net}
For almost every realization of the standard Brownian net,
$\cL$
is the unique $\sigma$-finite measure such that for every 
Borel set $E\subset\R^2$,
\be
\cL(E) = \int_\R |\{x : (x,t)\in E\cap(p,p)\}| dt \label{ex-cl}.
\ee
where $(p,p)$ refers to the set of $(p,p)$ points for the net $\Net^{b}$.
%$\cL$
%defines a $\sigma$-finite measure such that for every $\pi\in\Net^b$ 
%\be\label{cL-def}
%\forall \ x_1<x_2, t_1<t_2, \ \ \cL(\{(x,t)\in[x_1,x_2]\times[t_1,t_2]\ :  \ t>\sigma_\pi \ \ \mbox{and} \  \pi(t) =x  \} ) 
%\ = \ \int_{t_1\vee \sigma _{\pi}}^{t_2 \vee \sigma_{\pi}} \ 1_{\pi(t)\in[x_1,x_2]} \ dt.
%\ee
\edf

Given a realization of the standard Brownian net $\Net^b$,
we define the set of the killing marks $\Mzero^\kappa$ as
a Poisson point process with intensity measure
$\kappa \cL$. Finally, we define
the Brownian net with killing as the union
of 
(1) all the paths
$\pi\in\Net^b$ killed
at 
\be\label{str-e}
e_{\pi}:=\inf\{t>\sigma_{\pi} : (\pi(t),t)\in\M^\kappa \}
\ee
and (2) for every $z\in\R^2$, the trivial path whose starting point
and ending point coincide with $z$. In the following,
this construction
will be denoted as $\Net^{b,\kappa}$.

Let $E\subseteq \R^2$. We will denote by $\Net^{b,\kappa}(E)$ the subset of paths in $\Net^{b,\kappa}(E)$
with starting points in $E$. Finally, ${\cal M}^\kappa(S)$ will denote the set of killing points that are attained by $\pi\in\Net^{b,\kappa}$
with $\sigma_\pi= S$. We cite the following result from \cite{NRS15} (Proposition 2.14 therein).

\bprop\label{p214}
For every $S\in\R$, ${\cal M}^\kappa(S)$ is locally finite a.s..
\eprop

{\bf Coupling between the net and the net with killing.}
Our construction of $\Net^{b,\kappa}$ induces a natural coupling between $\Net^b$ and $\Net^{b,\kappa}$, and in the following,
$\Net^b$ will refer to the net used to construct the net with killing $\Net^{b,\kappa}$.
Let ${\cal U}^{b_\eps,\kappa_\eps}$ denote the discrete BCK (or discrete net with killing)  with branching and killing
parameters $(b_\eps,\kappa_\eps)$, and $\cU^{b_\eps}:=\cU^{b_\eps,0}$. Like in the continuum, there is a natural 
and analogous coupling between the discrete objects $\cU^b$ and $\cU^{b,\kappa}$, and again,
 $\cU^b$ will always refer to the discrete net coupled with $\cU^{b,\kappa}$. (This coupling is obtained by starting from $\cU^b$ and 
 by removing all the outgoing arrows independently at every vertex with probability $\kappa$).

{\bf Convergence.} In \cite{NRS15}, we show that the killed Brownian net is the scaling limit of 
the BCK system for small values of the branching and killing parameters. In the following,
for every element $U\in{\cal H}$, $S_\eps(U)$ is the set paths obtained 
after scaling the $x$-axis by $\eps$ and the $t$-axis by $\eps^2$.

\bteo(Invariance Principle)
\label{onewebteo}
Let $b,\kappa \geq0$ and
let $\{b_n\}_{n>0}$ and $\{\kappa_n\}_{n>0}$
two sequences of non-negative numbers such that $b_n+\kappa_n\leq 1$ and
such that 
$\lim_{n\to\infty} b_n \eps_n^{-1} = b$
and 
$\lim_{n\to\infty} \kappa_n \eps_n^{-2} = \kappa$.
Then, as $n\to\infty$,
$$
(S_{\eps_n}(\cU^{b_n}), S_{\eps_n}( {\cal U}^{b_{n},\kappa_{n}} ))\ \raw \  (\Net^{b}, \Net^{b,\kappa})
\ \ \ \ \text{in law.}$$ 
\eteo
\begin{proof}
In \cite{NRS15}, we  showed that $S_{\eps_n}( {\cal U}^{b_n,\kappa_{n}} )$ converges to $\Net^{b,\kappa}$ in law. However,
a closer look at the proof shows that
the result was shown by constructing a coupling 
between 
$
\{(S_{\eps_n}(\cU^{b_n}), S_{\eps_n}( {\cal U}^{b_{n},\kappa_{n}} ))\}_{n>0}$
and
$(\Net^{b}, \Net^{b,\kappa})$
such that  $S_{\eps_n}( {\cal U}^{b_{n}} )$
converges to $\Net^b$ a.s., and such that under this coupling,
the convergence of the second discrete coordinate to its continuum counterpart holds in probability. 
This allows to extend the invariance principle derived in \cite{NRS15} to Theorem \ref{onewebteo}.
\end{proof}

\section{The continuum voter model perturbation (CVMP)}
\label{coloring}

As described in the introduction, 
we aim at constructing the CVMP as a map
from $\R\times\R^{+}$ onto finite subsets of 
$\{1, . . . , q\}$.
This ``quasi-coloring" will be defined in terms
of the 
duality relation given in Section \ref{coloring-algo},
i.e,  
we see 
the CVMP
as dual to the 
Brownian net with killing. The key
observation is that even if the 
continuous graph
starting from a ``generic" point $z$
is infinite (what we mean by generic
will be discussed below in more details),
the reduced version
of this graph is locally finite. (Recall 
that
a reduced graph is constructed 
after removal of
all the non-relevant separation points in the original
graph --- see Definition \ref{reduced-graph-df}).

We now give an outline of the present section. 
In Section \ref{section:relevant}, 
we introduce the notion of relevant separation
points and of reduced graph for the Brownian net with killling and prove
some of their properties.
Those results are simple extensions of the results already known in the 
case $\kappa =0$ \cite{SSS08a, ScheSS08b}. % and \cite{ScheSS08b}. 
In Section \ref{f:CSPM},
we construct the CVMP. 
In Section \ref{properties}, we  prove 
properties 1 and 2 listed in Theorem \ref{teo1}.

\subsection{The reduced graph}
\label{section:relevant}

We first define a notion of graph isomorphism adapted to 
our problem.
\bdf\label{asy-iso}
Let $G=(V,E)$ and $G'=(V',E')$ be two finite 
directed acyclic graphs. 
We will say that $G$ and 
$G'$ are isomorphic if there exists a 
bijection $\psi$ from $V$ to $V'$ such that 
for every $z_1,z_2\in V$
\beqn
\label{connectivity}
(z_1,z_2) \in E \ \ \mbox{iff}  \ (\psi(z_1),\psi(z_2))\in E'.
\eeqn
\edf
In other words, two graphs will be
isomorphic   if they   
coincide modulo 
some relabelling of the nodes.

Let us consider the set of all finite directed acyclic
graphs whose vertices are labelled by points 
in $\R^2$. The graph isomorphism property is an equivalence relation partitioning this set of graphs into equivalence classes. In the following, we will denote by $(\G,\cal G)$ the set of all equivalence classes endowed with its natural  $\sigma$ field.  In this section, we  construct a natural 
reduced graph generated by the Brownian net with killing  
paths
(starting from a point $z$ until a certain time horizon $T$)  as an element 
of $(\G,\cal G)$. 

\bigskip

As in the standard Brownian net, in the Brownian net with killing, a separation point
(or branching point)
will refer to a point $z=(x,t)$ such that
there exist two paths
$\pi_r$ and $\pi_l\in\Net^{b,\kappa}$
with $\sigma_{\pi_l},\sigma_{\pi_r}<t$ and $e_{\pi_l},e_{\pi_r}>t$
so that (1) $\pi_l(t)=\pi_r(t)=x$ and (2) there exists 
$\eps>0$
so that $\pi_l(s)<\pi_r(s)$ for every $s\in(t,\min(t+\eps,e_r,e_l)]$.
We note that in the coupled pair $(\Net^{b}, \Net^{b,\kappa})$ the separation points of the killed Brownian
net and the ones of the standard Brownian net coincide a.s..

\bdf[Relevant Separation Point] 
A point $z=(x,t)\in\R^2$ with $t \in (S,T)$    is
said to be an $(S,T)$-relevant
separation point of the Brownian net $\Net^{b,\kappa}$, 
if and only if
there exist two paths $\pi_1,\pi_2\in{\cal N}^{b,\kappa}$
with starting time $\sigma_{\pi_1}=\sigma_{\pi_2}=S$
and ending times $e_{\pi_1},e_{\pi_2}>t$, 
and such that
\be
\pi_1(t) \ = \ \pi_2(t) \ \ \ \text{but} \ \ \pi_1(u) \neq \pi_2(u) \ \ \text{for} \ \ u \in \ \left( t, \ \inf(T,  e_1, e_2) \right).
\ee  
\edf

In the following, we will denote
by $R^\kappa({S,T})$ the set 
of $(S,T)$-relevant separation points in the 
Brownian net with killing $\Net^{b,\kappa}$ (with no $b$
superscript in $R^\kappa(S,T)$ to ease the notation). Finally,
$R({S,T})$ will denote 
the same quantity when $\kappa=0$ (Proposition 6.1. in \cite{ScheSS08b}).

\begin{prop}\label{finite-SP}
For every $S<T$, the set $R^\kappa (S,T)$ is locally finite a.s.
\end{prop}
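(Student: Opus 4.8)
The statement for $\kappa=0$, namely that $R(S,T)$ is locally finite, is precisely Proposition~6.1 of \cite{ScheSS08b} and may be invoked directly. The plan is therefore to deduce the statement with killing from the statement without killing, using the coupling between $\Net^b$ and $\Net^{b,\kappa}$ recorded in Section~\ref{onenet}. The essential observation is that turning on the killing mechanism can only \emph{remove} relevant separation points, never create new ones: a point $z=(x,t)$ that is $(S,T)$-relevant for $\Net^{b,\kappa}$ witnesses two paths $\pi_1,\pi_2\in\Net^{b,\kappa}$ issued from the line $\R\times\{S\}$ that coincide at time $t$ and separate immediately afterward, and since every path in $\Net^{b,\kappa}$ is (before its killing time) a path of the underlying net $\Net^b$, the same two paths witness that $z$ is $(S,T)$-relevant for $\Net^b$ as well. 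Consequently $R^\kappa(S,T)\subseteq R(S,T)$ almost surely under the coupling.

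First I would make the preceding inclusion precise. Fix $z\in R^\kappa(S,T)$ and let $\pi_1,\pi_2\in\Net^{b,\kappa}$ be the witnessing paths, so $\sigma_{\pi_1}=\sigma_{\pi_2}=S$, the ending times satisfy $e_{\pi_1},e_{\pi_2}>t$, and $\pi_1(t)=\pi_2(t)$ while $\pi_1(u)\neq\pi_2(u)$ for $u\in(t,\inf(T,e_{\pi_1},e_{\pi_2}))$. Under the coupling each $\pi_i$ is obtained by running a net path in $\Net^b$ and truncating it at its first killing mark; thus the restrictions $\pi_i|_{[S,e_{\pi_i}]}$ extend to genuine paths $\tilde\pi_i\in\Net^b$ with the same starting point and the same trajectory on $[S,e_{\pi_i}]$. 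Because $e_{\pi_1},e_{\pi_2}>t$, both extensions are defined on a neighborhood of $t$ to the right, and on the interval $(t,\inf(T,e_{\pi_1},e_{\pi_2}))$ they agree with $\pi_1,\pi_2$ and hence remain distinct. This exhibits $\tilde\pi_1,\tilde\pi_2\in\Net^b$ certifying that $z$ is an $(S,T)$-relevant separation point of the killing-free net, i.e. $z\in R(S,T)$.

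Given the almost sure inclusion $R^\kappa(S,T)\subseteq R(S,T)$, local finiteness of $R^\kappa(S,T)$ follows at once from local finiteness of $R(S,T)$: for any bounded Borel set $A\subset\R^2$ we have $|R^\kappa(S,T)\cap A|\leq |R(S,T)\cap A|<\infty$ a.s. The only point requiring a little care---and the step I expect to be the main (modest) obstacle---is the measure-theoretic bookkeeping in the coupling: one must confirm that the truncation relating $\pi_i$ and its extension $\tilde\pi_i$ behaves well exactly at the boundary time $t$, in particular that a path not yet killed strictly before time $t$ really does extend to a net path defined past $t$. This is guaranteed by the definition of the killing time $e_\pi=\inf\{u>\sigma_\pi:(\pi(u),u)\in\M^\kappa\}$ in \eqref{str-e} together with the coincidence of separation points in the coupled pair $(\Net^b,\Net^{b,\kappa})$ noted just before Definition~\ref{reduced-graph-df}; once this is in place the argument is complete.
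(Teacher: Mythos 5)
The key inclusion your argument rests on, $R^\kappa(S,T)\subseteq R(S,T)$, is false, and the failure is exactly where the killing enters the definition of relevance. For the killed net, the witnessing paths $\pi_1,\pi_2$ are only required to stay distinct on $\left( t,\ \inf(T,e_{\pi_1},e_{\pi_2})\right)$, and this interval can end strictly before $T$ when one of the paths is killed. Your extensions $\tilde\pi_1,\tilde\pi_2\in\Net^b$ agree with $\pi_1,\pi_2$ only up to the killing times; beyond $e_{\pi_i}$ you have no control over them, and in the Brownian net two paths that separate at a point typically do re-meet soon afterward. Concretely: take a separation point $z=(x,t)$ of $\Net^b$ that is \emph{not} $(S,T)$-relevant because every pair of net paths through $z$ re-coalesces at some time $t'<T$; if a killing mark falls on one of the outgoing paths in $(t,t')$, then $z$ \emph{becomes} an $(S,T)$-relevant separation point of $\Net^{b,\kappa}$ while still lying outside $R(S,T)$. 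Since killing marks are laid down by a Poisson process with intensity $\kappa\cL$, this happens with positive probability, so turning on the killing mechanism creates relevant separation points rather than only removing them — the opposite of your "essential observation." Your claim would also conflict with the role of Proposition \ref{finite-SP} later in the paper (e.g.\ the countability of all separation points is deduced from it precisely because $R^\kappa$ is a richer set).

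The actual proof has to work around this: one introduces the successive killing epochs $\theta^0=S$, $\theta^{i+1}=\inf\{t>\theta^i: \exists (x,t)\in\M^\kappa(\theta^i),\ |x|<L\}$, and shows that (on the event that the relevant paths stay in a box) $R^\kappa(S,T)\cap E\subseteq \bigcup_{i\geq 0} R\left(\theta^i\wedge T,\ \theta^{i+1}\wedge T\right)$ — i.e.\ a point relevant for the killed net on $(S,T)$ is relevant for the \emph{unkilled} net, but possibly only over the shorter random interval between consecutive killing times. One then needs two further inputs that your proposal never has to face: local finiteness of $R(U,V)$ at the \emph{random} times $\theta^i$ (handled in the paper by a Fubini argument over deterministic times together with the structure of the Poisson marking), and almost sure finiteness of the number of epochs before $T$ (from the i.i.d., strictly positive increments $\theta^{i+1}-\theta^i$, via the strong Markov property and stationarity). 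So the gap is not bookkeeping at the boundary time $t$, as you anticipated; it is that the whole reduction to the $\kappa=0$ case needs the time-decomposition, without which the statement you are reducing to does not dominate the one you want.
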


\begin{proof}
As already mentioned, this property for $\kappa =0$ has been established in \cite{ScheSS08b}.
It remains to extend the property for $\kappa \neq0$. 
Let $E\subset\R^2$ be a bounded set
and let us show that $R^{\kappa}(S,T)\cap E$ is finite a.s.. 

Let $L>0$ and define $B_L$
to be the set of realizations such that 
$$
\{\pi\in\Net^b \ :  \ \sigma_\pi=S, \ trace(\pi)\cap E \neq \emptyset \} \ \subset \ \{\pi\in\Net^b \ :  \ \sigma_\pi=S, \  \forall t\in[S,T], \  -L<\pi(t)<L  \}.
$$
In particular, the equicontinuity of the paths in $\Net^b$
implies that $\P(B_L)$ goes to $1$ as $L\to\infty$.

Next, we inductively define $\theta^0\equiv\theta^{0,L}=S$ and a
sequence
of killing times $\{\theta^i\equiv\theta^{i,L}\}_{i\geq1}$
as follows: 
\beq\label{thetai}
\forall i \geq 0, \  \theta^{i+1} \ = \ \inf\{t > \theta^i : \ \exists (x,t) \ \in \ \M^\kappa \left(\theta^i\right)  \ \text{and}  \ |x|<L\}.
\eeq
where $\M^\kappa (t)$ is the set of killing points attained by the
set of paths $\pi\in\Net^{b,\kappa}$ with $\sigma_{\pi}= t$. 
From the very definitions of the $\theta^i$'s, it is not hard to see that
\be\label{qe1}
\mbox{conditionaly on $B_L$,   $R^{\kappa}(S,T)\cap E \ \subset \ \cup_{i\geq0} R(\theta^{i}\wedge T, \ \theta^{i+1}\wedge T)$}. 
\ee
(See also Fig. \ref{desin}.)
\begin{figure}[h!]
\centering
\includegraphics[scale=.25]{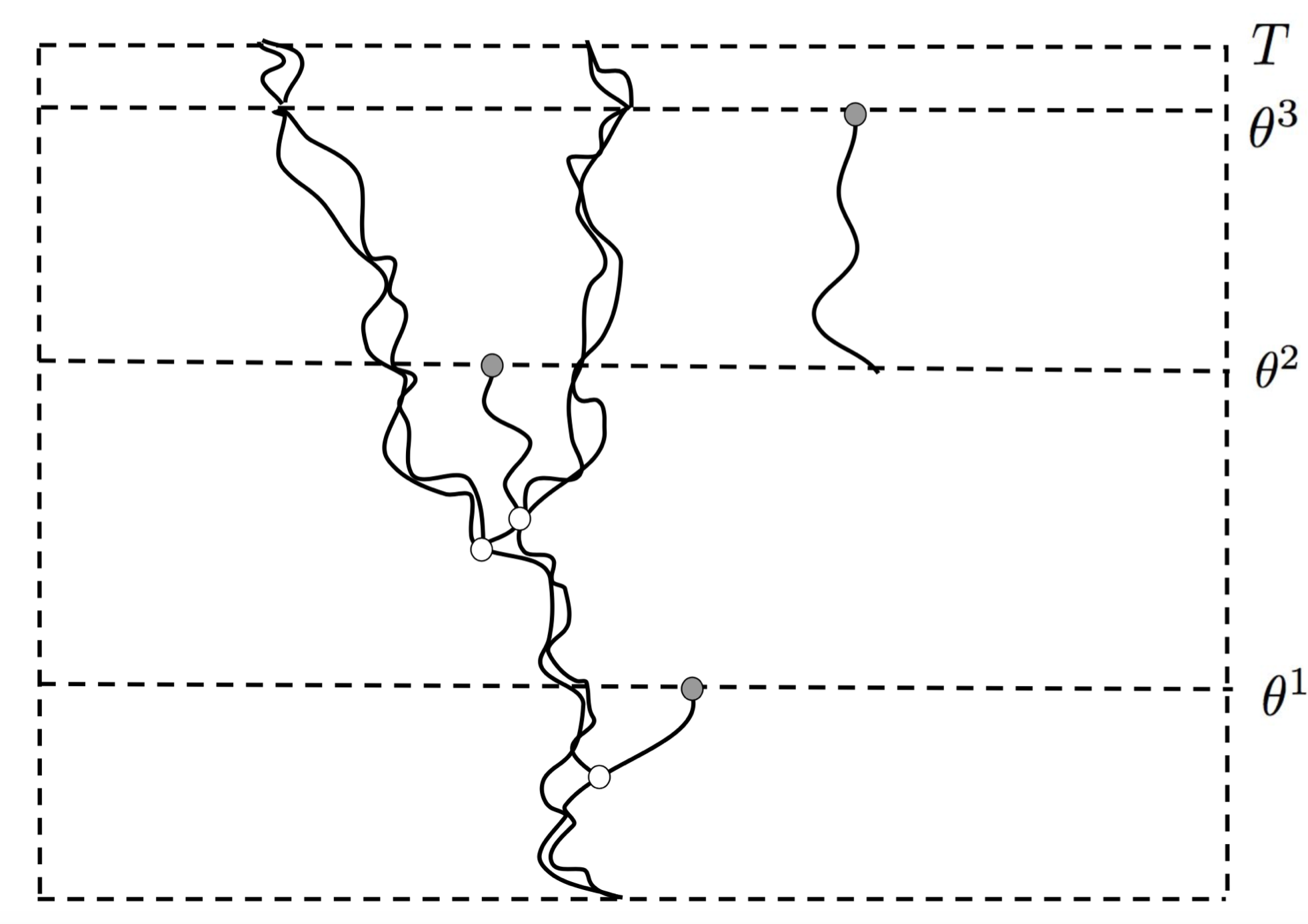}
\caption{
Grey circles represent points of the form $(x,\theta^i)\in{\cal M}^\kappa$.
The $(S,T)$-relevant separation points (white circles) hit by a path starting from a given point in $(-L,L)\times\{S\}$ 
and staying in the box $(-L,L)\times[S,T]$ 
belong to
$\cup_{i\geq0} R(\theta^{i}\wedge T, \ \theta^{i+1}\wedge T)$.
}
\label{desin}
\end{figure} 
\bigskip

We now claim that
\be\label{qe2}
\mbox{for every $i$, $R({\theta^{i}\wedge T, \theta^{i+1}\wedge T})$ is locally finite.}
\ee
In order to prove (\ref{qe2}), it is enough to prove that 
(a) when $\theta^i<T$, $R({\theta^i,T})$ is locally finite, and (b) $R({\theta^i,\theta^{i+1}})$ is also locally finite.
For property (a), we first note that $R(U,T)$ is a.s. locally finite
for every deterministic $U$ with $U<T$,  implying
that
the set of times $U$
for which $R(U,T)$ is locally finite 
has full Lebesgue measure.
Combining this with the definition of the random intensity 
measure used in our Poisson marking
(see Definition \ref{tlm-net}) and using Fubini's theorem  
show that property (a) must hold.

For Property (b), the strong Markov Property 
and stationarity in time 
show that  it is enough to prove 
the result for the special value $i=0$. 
For any deterministic times $U$,
we know that
$R(S,U)$ is locally finite a.s.. Reasoning as in the previous paragraph
completes the proof of (b), and thus (\ref{qe2}).

We are now ready to complete the proof of Proposition \ref{finite-SP}.
First of all, $\M^\kappa (S)$ is locally finite by Proposition \ref{p214}. Thus
we get 
that $\theta^1-\theta^0>0$ a.s.. Furthermore, using
the strong Markov property of the standard Brownian net
and its stationarity in time,
the sequence
$(\theta^{i+1} - \theta^{i})$ 
is a sequence
of i.i.d. strictly positive random variables.
As a consequence,
for every
realization
of $\Net^{b,\kappa}$,
the sequence $\{R({\theta^i\wedge T, \theta^{i+1}\wedge T})\}_{i\geq0}$
is empty after some finite value of $i$. 
Our proposition
then immediately follows from (\ref{qe1}) and (\ref{qe2}) and the fact that $\P(B_L)$
goes to $1$ as $L$ goes to $\infty$.

\end{proof}

Define
\begin{eqnarray}
L^\kappa ({S,T}) & = & \{(x,T) \ : \  
\exists \pi\in\Net^{b,\kappa}, \ \mbox{s.t.} \ \sigma_{\pi}=S, \ e_\pi\geq T, \ \pi(T)=x \}, \nonumber \\
K^\kappa ({S,T}) & = & {\cal M}^\kappa (S) \bigcap \R\times[S,T]. \label{def-kl}
\end{eqnarray}

\bdf[Continuous Reduced Graph]\label{def:reduced-graph}
Let $S<T$ be deterministic or random. Let $z=(x,S)$ and let $V^z(T)$
be the union of the singleton $z$ with 
the subset of vertices $L^\kappa ({S,T}) \bigcup K^\kappa ({S,T}) \ \bigcup \ R^\kappa ({S,T})$ reached by a path in $\Net^{b,\kappa}$  starting 
from
$z$.
\ben
\ii A point $z=(x,S)$ will be said to be a $T$-finite point iff $|V^z(T)|<\infty$.
\ii For every $T$-finite point $z=(x,S)$,
the reduced graph starting from $z$ on 
the time interval $[S,T]$ --- denoted by $G^z(T)$ --- 
is the random directed graph
whose vertices are given by $V^z(T)$ and 
whose edges are generated by the 
Brownian net with killing $\Net^{b,\kappa}$:
two points $z'=(x',t'),z''=(x'',t'')\in V^z(T)$, will
be connected by a (directed) edge
iff $t'<t''$  and there exists
$\pi \in \Net^{b,\kappa }$
such that 
$\sigma_\pi = t', \pi(t')=x',\pi(t'')=x''$ and 
$$
\forall u\in(t',t''), \ (\pi(u),u) \notin L^\kappa ({S,T}) \bigcup K^\kappa ({S,T}) \ \bigcup \ R^\kappa ({S,T}). 
$$
\een
\edf

\bprop\label{finite-3.3}
Let $S<T$ be two deterministic times. Almost surely for every realization of the Brownian net with killing,
every point (deterministic or random) in $\R\times \{S\}$ is $T$-finite. 
\eprop

\begin{proof}
By equicontinuity of the paths in $\Net^{b,\kappa}$, we only need to show that 
$$
L^\kappa(S,T) \bigcup K^\kappa({S,T}) \ \bigcup \ R^\kappa (S,T)
$$
is locally finite a.s.. First, $R^\kappa (S,T)$ is locally finite using 
Proposition \ref{finite-SP}, and  ${\cal M}^\kappa (S)$ is also locally finite by Proposition \ref{p214}. Finally, when $\kappa =0$, 
$L^0({S,T})$ is locally finite by Proposition \ref{bc-set}. Since $L^\kappa (S,T)$ is stochastically
dominated by $L^0({S,T})$, this ends the proof of Proposition \ref{finite-3.3}.

\end{proof}

\subsection{Construction of the continuum voter model perturbation}
\label{f:CSPM}
In this section, we construct the CVMP alluded to in 
Theorem \ref{teo1}. It will depend on following quantities
(1) the branching and killing parameters $b,\kappa $,
(2) the boundary nucleation mechanism $\{g_{i,j}\}_{i,j\leq q}$,
(3) two probability distributions $p$ and $\lambda$ on $\{1,\cdots,q\}$,
where $p$ is the bulk nucleation mechanism and $\lambda$
is the analog of the one-dimensional marginal of the initial
distribution for the discrete voter model perturbation.

Recall that
in order to determine the color 
of a vertex $(x,t)$ in
our discrete
voter model perturbation
$\{\theta^{\eps}(x,t)\}_{(x,t)\in \Z^2_{odd}}$,
we start 
with a random coloring of the 
leaves of the graph induced by 
the backward BCK, starting from $(x,t)$ as a root.
The color of $(x,t)$
is then determined
by applying the 
algorithm described in Section \ref{coloring-algo} to the reduced graph rooted at $(x,t)$ (see Proposition \ref{reduced-graph-df2}).

\bigskip

{\bf The backward Brownian net with killing.} As in the discrete case, the backward Brownian net with killing $\hat \Net^{b,\kappa}$
is obtained from the Brownian net with killing by a reflection about the $x$-axis.
In the following, we will use identical notation 
for the backward Brownian net with killing, adding only a hat sign
to indicate that we are dealing with the backward object. For instance $\hat R^\kappa (S,T)$
will refer to the (S,T)--relevant separation points for the backward net between times $S$ and $T$.

\bigskip

{\bf Step 0.  Assignment of i.i.d. random variables.} 
In the continuum, 
we consider a realization 
of the backward Brownian net with killling $\hat \Net^{b,k}$
and consider
the random set of points
consisting of the union of
\begin{enumerate}
\ii all the separation points in $\hat \Net^{b,\kappa}$,
\ii all the killing points in $\hat \Net^{b,\kappa}$,
\ii the set $\{(x,0) \ : \ \exists \hat\pi \in \hat \Net^{b,\kappa} \ \text{s.t.} \ \sigma_{\hat \pi} > 0  \ \text{and} \ {\hat \pi}(0)=x \}$
\end{enumerate} 
Those three sets are countable 
for almost every realization of the Brownian net with killing for the following reasons:
\begin{enumerate}
\item First, for any separation point $z$, 
there exists $p<q\in\Q$ such that $z\in \hat R^\kappa_{p,q}$.
From Proposition \ref{finite-SP}, $\hat R^{\kappa}_{p,q}$ is locally finite a.s., which implies that 
the set of separation points is countable a.s.
\item Since, conditional on ${\cal N}^b$, the set of killing points is defined as a Poisson Point Process, this set is also countable a.s..
\item  Finally,
for the third set, we note that
 \beqnn
 \{(x,0) \ : \ \exists \hat\pi \in \hat \Net^{b,\kappa} \ \text{s.t.} \ \sigma_{\hat \pi} > 0  \ \text{and} \ {\hat \pi}(0)=x \} \ = \
 \cup_{s>0, s\in\Q}  \ \hat \xi^s(0) 
 \eeqnn
where $\hat \xi^s(0)$ is the backward version of the branching-coalescing point set as defined in Proposition \ref{bc-set}. 
Since  $\hat \xi^s(0)$ is locally finite for every deterministic $s$, the third item is also countable.
\end{enumerate}
Finally, since the random set consisting of the union of the sets 1, 2 and 3 above is a.s. countable,
given
a realization
of $\hat \Net^{b,\kappa}$,
i.i.d. uniformly distributed random
variables $U_z$ in $[0,1]$ can be assigned to each of the points $z$
in the three sets described above.

\bigskip

{\bf Step 1. Pre-coloring of the leaves.}
We consider the set of leaves, consisting of the killing points 
and the intersection points of ${\hat \Net}^{b,\kappa}$ with $\{t=0\}$. For each such point $z$, we deduce
the pre-coloring of $z$, denoted by $\bar\theta(z)$,  from the variable $U_z$ according to the following rule.
\begin{itemize}
\item If the leaf $z$ is an intersection point between a path from $\hat \Net^{b,\kappa}$ and $\{t=0\}$ (i.e. a point from set 3), 
$\bar \theta(z)$
is the unique integer in $\{1,\cdots,q\}$ such that   
$$
U_z \in [\sum_{i\leq \bar \theta(z)-1 }\lambda(i), \sum_{i\leq \bar \theta(z)} \lambda(i) )
$$
with the convention that $\lambda(0)=0$.
\item If the leaf $z$ is a killing point in $\hat \Net^{b,\kappa}$ (i.e. from set 2), $\bar \theta(z)$
is the unique integer in $\{1,\cdots,q\}$ such that 
$$
U_z\in [\sum_{i\leq \bar \theta(z)-1 }p(i), \sum_{i\leq \bar \theta(z)} p(i) )
$$
with the convention that $p(0)=0$.
\end{itemize}

{\bf Step 2 : Pre-coloring of the ``nice" points.} 
Given a point $(x,S)$ and a time $T>S$,
recall the definition of the reduced graph
$G^{(x,S)}(T)$ in
the forward Brownian net with killing $\Net^{b,\kappa}$ (see Definition \ref{def:reduced-graph}). Similarly, for any $S>0$,
one can define  a reduced graph $\hat G^{(x,S)}(0)$
whenever the point $(x,S)$ is $0$-finite
in the backward Brownian net with killing.

\bdf
A point is ``nice'' if it is a $0$-finite point
for the backward Brownian net ${\hat \Net}^{b,\kappa}$ . 
\edf

In particular, if we consider ${\cal D}$, a countable deterministic dense set of
$\R\times\R^+$, Proposition \ref{finite-3.3} (in its backward formulation)
ensures that  all the points in ${\cal D}$ are nice 
for almost every realization of the Brownian net with killing. 

\bdf
A graph is said to be simply rooted if the out-degree of the root is $1$.
\edf

For nice points with a simply rooted graph,
we assign the color determined 
by the coloring algorithm described in Section \ref{coloring-algo} using the family 
of distributions $\{g_{i,j}\}$ and the $U_z$'s at separation points  (as in (\ref{fghj})). 
For every such point,
this defines a color that we denote by $\bar \theta(z)$.

For graphs with more complicated root structure (i.e. with the root connected to more than one point),
we decompose 
the finite graph into simply rooted graphs. (For instance, if the root connects to two distinct points $z'$ and $z''$, the first component is
the subgraph whose set of vertices is  the union of the root with the vertices that can be accessed from $z'$; the second component
is defined analogously.) 
$\bar \theta(z)$ is then defined as the union
of the coloring induced by each of the simply rooted subgraphs in the decomposition.

\brm\label{rem1}
\label{n-colors}
By construction, $|\bar \theta(z)|$ is less or equal to the out-degree 
of the root.
\erm

%We note that our algorithm
%not only determines the coloring 
%of the root $(x,t)$, but also colors every 
%other vertices of the reduced graph $\hat G^{(x,t)}(0)$.
%Next, by definition

%This determines the coloring of the path of $\Net^{b,k}$
%starting from the dense deterministic and 
%countable set ${\cal D}$. 

{\bf Step 3 : Horizontal Limits.}
The coloring
of the half-plane $\R\times\R^{+}$ is generated by taking horizontal limits. Our procedure is based on the following
lemma.

\blem\label{more-1}
Almost surely for every realization of the backward Brownian net with killing, 
for every time $t$ (deterministic or not) the set $\{x \ : \ (x,t) \ \text{is nice}\}$
is dense in $\R$ . 
\elem
\begin{proof}
Let us consider a dense countable deterministic set ${\cal D}\subset\R\times\R^+$. By Proposition \ref{finite-3.3} in its backward formulation,
every point in ${\cal D}$ is nice a.s.. Furthermore, any point $z'$ belonging to
the trace of $\hat\Net^{b,k}({\cal D})$ (the backward Brownian net with killing paths starting from ${\cal D}$),
must also have a finite graph representation, 
since every relevant separation point    for $z'$
is also relevant for some $z\in {\cal D}$, and every leaf  for $z'$ is also a leaf for  $z$.    Hence, for every every
realization
of the Brownian net, for every $t$ (random or deterministic)
it is sufficient to prove that 
$\hat \Net^{b,\kappa}({\D})$ intersects the set $\R\times\{t\}$ at a dense set of points. 

For $\kappa=0$, the property directly follows from a property of the Brownian web: the so-called skeleton $\hat {\cal W}_r({\cal D})$ (and also $\hat {\cal W}_l({\cal D})$) 
intersect every  horizontal time $\R\times\{t\}$ at a dense set of point. (The contrary
would easily contradict the equicontinuity of the web paths.)
A fortiori, the net with no killing $\hat \Net^{b}$ enjoys the same property. Finally, this extends to the case $\kappa>0$, 
by noting that our Poisson construction of the killing points can not charge  any horizontal line $\R\times\{t\}$
with more than one point.

\end{proof}

For every point $(x,t)\in\R\times\R^+$ (including the nice points described previously),
we define the colors at point $(x,t)$ as
\beqn
\label{di-0}
\theta(x,t) \ = \ \{c\in\{1,\cdots,q\} \ : \ \exists \{(x_n,t)\} \ \text{``nice" s.t.} \
 \  x_n \rightarrow x \ \text{and } \ \forall n, \ c\in \bar \theta(x_n,t)\}
\eeqn 
From Lemma \ref{more-1}, $\theta(x,t)$ is well defined. Further,
for every nice point $z\in\R\times\R^+$, we must have
$\bar \theta(z)\subseteq\theta(z)$.

\brm
It is natural to ask whether one could define 
the CVMP by taking limits in any directions. It is not hard to show that 
for every deterministic $(x,t)$, and every color $c$,
there exists a sequence $(x,t_n)$ with $t_n\to t$
such that $\bar \theta(x,t_n)=c$. As a consequence, 
if we allowed for vertical limits, every deterministic point
will be trivially colored with $\{1,\cdots,q\}$. This property was established 
in the case where the boundary noise is absent (Theorem 1.1.(7) in \cite{FINR05})
\erm

\subsection{Properties of the CVMP}
\label{properties}

We will now prove some basic properties of our mapping (Theorem \ref{teo1}(1)--(2)).
In order to do so, we first elaborate on the properties of the reduced graph at 
deterministic times.

\subsubsection{Reduced graphs at deterministic times}

In this subsection, we derive some properties of the reduced graph at deterministic times. We 
formulate these results ``forward in time'' -- i.e., for the original Brownian net, and not its backward version. When needed (in the forthcoming sections),
these results will be used in 
their backward formulation.

Before stating the main proposition of this section (see Proposition \ref{reduced-graph-det-times} below), 
we start with some definitions.
Let $z_0$ and $z_1$ be two $T$-finite points. We will say 
that $G^{z_0}(T)$ and $G^{z_1}(T)$ only differ by their root iff
they are identical up to relabeling of their root, i.e., when 
the mapping $\psi$, defined by
$$
\forall z \in \ V^{z_0}(T), \ \ \psi(z) = \left\{ 
  \begin{array}{l l}
     z & \quad \text{if $z\neq z_0$ }\\
    z_1 & \quad \text{otherwise}\\
  \end{array} \right.,
$$
is a graph isomorphism from $G^{z_0}(T)$ to $G^{z_1}(T)$ (as in Definition \ref{asy-iso}).

\bigskip

Before stating our next result, we recall from Proposition \ref{special-net} that for every deterministic $S$, a.s. for every realization of the Brownian net with killing,
every point $(x,S)$ is either of type $(o,p), (p,p)$ or $(o,pp)$.

\bprop\label{reduced-graph-det-times}(Reduced Graph at Deterministic Times)
Let $S,T$ be a pair of deterministic times with $S<T$.  For almost every realization of the Brownian net with killing,  every $z_0=(x_0,S)$ is ``nice'' (with $x_0$ random or deterministic) and 
the following properties hold:
\ben
\ii If $z_0=(x_0,S)$ is of type $(o,p)$ or $(p,p)$ then $G^{z_0}(T)$
is a simply rooted graph. In particular,
for any deterministic point $z_0$, $G^{z_0}(T)$ is simply
rooted. 
\ii If $z_0 = (x_0, S)$ is of type (o, pp), then  the root $z_0$ of the graph
$G^{z_0}(T)$ is either of out-degree 1 or 2. 
\ii Every intermediate node in $G^{z_0}(T)$ has out-degree 2.
%The degree of $z_0$
%is equal to 2 iff there
%exists $\hat \pi$
%in the standard Brownian net
%$\Net^{b}$
%starting from one of the 
%leafs in $V(z_0,T)$ and 
%such that $\hat \pi$
%enters the points $z_0$. 
\ii The set
\beq
S_2(S,T) : =  \{x_0 \ : \ G^{(x_0,S)}(T) \ \ \text{is not simply rooted} \ \}
\eeq
is a locally finite set and is made of $(o,pp)$ points. Further, between two consecutive points of $S_{2}(S,T)$,
the graphs only differ by their root.
 I.e., if  $x,y$ are  such that
$$
x_1< x \leq y <y_1,
$$ 
where $x_1$ and $y_1$ are two consecutive
points in $S_2(S,T)$,
then $G^{(x,S)}(T)$ 
and $G^{(y,S)}(T)$ 
only differ by their root.
\een
\eprop

The proof of this result relies on some properties of the special point of the Brownian net.
Let $(r,l)\in (\Wr, \Wl)$ and let $z,z'$ be two vertices 
in the graph $G^{z_0}(T)$. We will write 
$$
z: =(x,t)\to_{l,r}z':=(x',t')
$$ 
iff $t<t'$, $l \sim_{out}^{z} r$ and  $l \sim_{in}^{z'} r$, and 
$z$ and $z'$ are connected by the paths $l$ and $r$, in the sense that $l$ and $r$ do not encounter 
any element of $V^{z_0}(T)$ in the time interval $(t,t')$. 
Among other things, the following technical lemma will allow to relate the out-degree of
a root to its type (see Corollary \ref{cor-eq}).

\blem\label{lem-lem}
Let $S<T$ be two deterministic times. For almost every realization of the Brownian net with killing,  for every $z_0=(x_0,S)$ (with $x_0$ random or deterministic),
the following properties hold.
\begin{enumerate}
\item Suppose $z\in V^{z_0}(T)$  is not a leaf.
For every equivalent outgoing  pair $(l,r)$ from $z$,
there exists a unique $z'\in V^{z_0}(T)$
such that  $z\rightarrow_{l,r}z'$. 
\item For every $z,z'\in V^{z_0}(T)$ such that $z\to z'$,
there exists a unique pair $(l,r)\in(\Wr, \Wl)$ such that 
$z=(x,t)\to_{l,r} z'=(x',t')$. 
%Further, for any path $\pi\in\Net^{b,k}$ connecting 
%$z$ to $z'$, $\pi$ is squeezed between $l$ and $r$, i.e.,
%$l(s) \leq \pi(s) \leq r(s)$ for $s\in[t,t']$. 
\item Let $z_-=(x_-,S)$ and $z_+=(x_+,S)$ with $x_-\leq x_+$ such that there exist two distinct equivalent outgoing paths $(l,r')$
and $(l',r)$  starting respectively from $z_-$ and $z_+$, with the convention that
when $x_-=x_+$,  $(l,r')$ and $(r',l)$ are ordered from left to right. Finally, let 
$z_l$ and $z_r$ such that 
$$
z_-\to_{l,r'} z_l \ \ \mbox{and} \  \ z_+\to_{l',r} z_r. 
$$
Then $z_l\neq z_r$ if and only if $l$ and $r$ do not meet
on $(S,U_{l,r'}\wedge U_{l',r}]$,
where
\begin{equation}\label{Ulr}
U_{l,r'}:=T \ \wedge \ \inf\{u\geq S \ : \ \exists \pi\in\Net^{b,\kappa} \ \ \mbox{s.t.} \  \sigma_{\pi}=S, \ e_\pi = u, \ \mbox{and}  \  l \leq \pi \leq r' \}
\end{equation}
and $U_{l',r}$ is defined analogously using $l'$ and $r$.
\end{enumerate}
\elem

\begin{proof}

We start with the proof of item 1. 
Let $z:=(x,t)\in V^{z_0}(T)$, 
for every equivalent outgoing  pair $(l,r)$ at $z$, 
define $U_{l,r}$ analogously to (\ref{Ulr}), i.e.,
$$
U_{l,r} \ = \ T \ \wedge \ \inf\{u \geq t \ : \ \exists \pi\in\Net^{b,k} \ \ \mbox{s.t.} \  \sigma_{\pi}=t, \ e_\pi = u, \ \mbox{and}  \  l \leq \pi \leq r' \}.
$$
Finally, define
\be\label{taulr}
\tau_{l,r} := \sup\{u \in [t,U_{l,r}] : l(u) = r(u)\}
\ee
to be the last time $l$ and $r$ separate
before time $U_{l,r}$. We first claim that  the point
\begin{equation}\label{zprime}
z':=(l(\tau_{l,r}),\tau_{l,r}) = (r(\tau_{l,r}),\tau_{l,r}) 
\end{equation}
belongs to $V^{z_0}(T)$ and that  $r \sim^{z'}_{in} l$.
In order to prove this claim, we distinguish two cases. 

Let us first assume that $\tau_{l,r} = U_{l,r}$. There are only two possibilities: If $U_{l,r}=T$, then $z'$ is a leaf and must must be of type $(p,p)$ by Proposition \ref{special-net}(4); 
if $U_{l,r}\neq T$, the point $z'$ must be a killing point (ands thus a leaf) which is also
of type $(p,p)$ (this follows directly from the structure of the intensity measure (\ref{ex-cl}) used in the Poissonian construction
of the killing points).
Hence, when $\tau_{l,r} = U_{l,r}$, the point $z'$ (as defined in (\ref{zprime})) is always a leaf of $V^{z_0}(T)$ and is always of type $(p,p)$, 
which implies that $l \sim^{z'}_{in} r$. 
%Finally, by definition of a relevant separation point,
%$\tau=U$ implies that $l$ and $r$
%do not encounter any relevant separation point
%before entering the point $z'$,
%which implies that $z\rightarrow_{l,r'}z'$. 

\begin{figure}[h!]
\centering
\includegraphics[scale=.2]{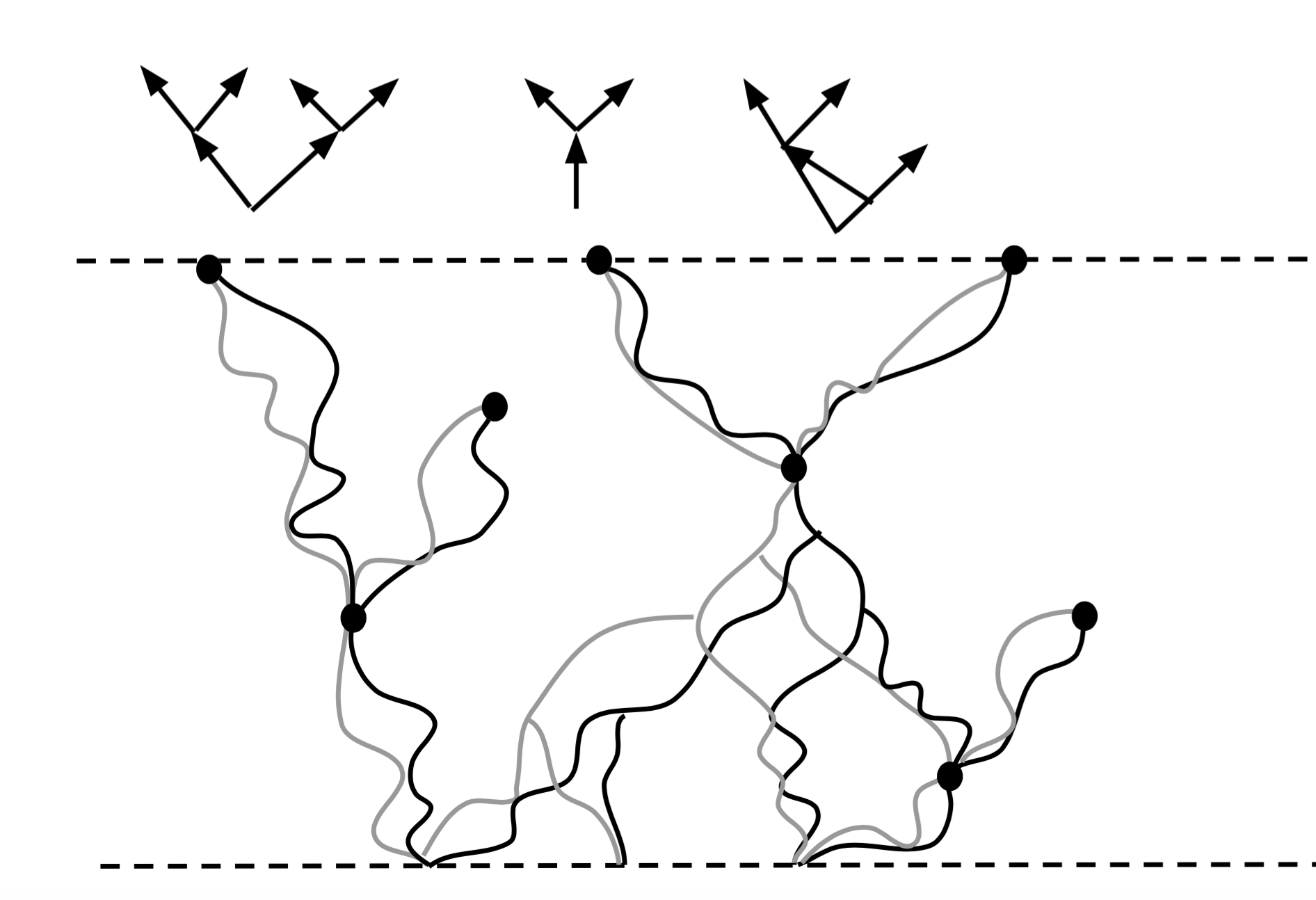}
\caption{Three reduced graphs generated from the net paths starting from points on the line $\R\times\{S\}$ (with time horizon $T$). The leftmost and rightmost graphs are generated from two consecutive points in $S_2(S,T)$. Grey (resp., black) paths represent paths from $\Wl$ (resp., $\Wr$).}
\label{reduced-graph}
\end{figure}

Let us now assume that $\tau_{l,r} < U_{l,r}$. It is not hard to show that
$z'$ is an (S,T)-relevant separation point in $G^{z_0}(T)$.
Since separation points 
are of type $(p,pp)$ (by Proposition \ref{geo-net-sp}(1)), 
$z'$ is a point of $V^{z_0}(T)$ (a separation point)
with the property
$l \sim^{z'}_{in} r$.

In the two previous paragraphs, we showed that the point $z'=(l(\tau_{l,r}),\tau_{l,r})$ belongs to $V^{z_0}(T)$ and that 
$l\sim^{z'}_{in} r$. Furthermore,
the definition of $\tau_{l,r}$
directly implies that $r$ and $l$
do not encounter any relevant separation point or any killing point
before entering the point $z'$. Combining those two facts implies that $z\to_{l,r} (l(\tau_{l,r}),\tau_{l,r})$. 
This ends the proof of item 1.
%This proves that
%$z \rightarrow_{l,r} z'$. 

%We proved that $(l(\tau),\tau)$ is such that $z\to_{l,r} (l(\tau),\tau)$. It remains to prove uniqueness.
%By definition, any path $\pi\in\Net^{b,k}$ such that 
 %$\pi(t)=x$ and  $l \leq\pi \leq r$ on $[t,\infty)$,
%can not access any killing point, any leaf or any separation point (strictly) before time $\tau$.
%This implies that $(l(\tau),\tau)$ is the only point in $V(z_0,T)$
%such that $z\to_{l,r} z'$.

%Let us assume the existence of $z''=(x'',t'')$ 
%and such that  $z\to_{l,r} z''$. By contradiction , let us assume that $t''<t'$. Then $z''$ must be a relevant separation point (since it can neither be a root nor a leaf). As a consequence, the restriction of $l$
%on $r$ and $[t'',\infty)$ are respectively the leftmost and rightmost paths of the net 
%starting from $z''$ (see the last item of Proposition \ref{geo-net-sp}), and thus $l$ and $r$ must separate up to time at least $U$ in order for $z''$
%to be relevant. But, this 
%would contradict the definition of $\tau$. This proves the first part of item 1.

\medskip

We now proceed with the proof of the second item of Lemma \ref{lem-lem}. Let $z,z'\in V^{z_0}(T)$ be such that $z\to z'$.
Let $\pi\in\Net^{b}$ be a path connecting $z$ and $z'$.  $z$ is not a leaf and thus it can either be (1) a separation point, or (2) a root.
In case (1), Proposition \ref{geo-net-sp} implies that $\pi$ is squeezed between a unique equivalent pair $(l,r)$ such that $l \sim^{z}_{out} r$.
In case (2), since we are considering the case of a reduced graph rooted at a deterministic time, 
the same property must hold by Proposition \ref{special-net}.  
Let $\tau_{l,r}$ be defined as in (\ref{taulr}). By definition of $\tau_{l,r}$,
$\pi$ can not access any point of $V^{z_0}(T)$ on the interval $(t,\tau_{l,r})$. Since we proved that $(l(\tau_{l,r}),\tau_{l,r})\in V^{z_0}(T)$,
$z'$ must coincide with $(l(\tau_{l,r}),\tau_{l,r})$. Finally, since we 
showed that $z\to_{l,r} (l(\tau_{l,r}),\tau_{l,r})$, this ends the proof of item 2.

\bigskip

We now proceed with the proof of the third item of Lemma \ref{lem-lem}.
%The existence of the pairs $(l,r')$ and $(l',r)$
%is guaranteed by the fact that every point with deterministic time coordinate
%is of type $(*,**)$ with $*=o,p$ and $**=p,pp$-- see Proposition \ref{special-net}.
The existence of $z_l$ and $z_r$ is guaranteed by
item 1 of Lemma \ref{lem-lem}.
Further, from what we showed earlier, 
$z_l=(l(\tau_{l,r'}), \tau_{l,r'})$ and $z_r=(r(\tau_{l',r}), \tau_{l',r})$ and thus
$$
z_l = z_r \ \ \Longleftrightarrow  \ \ \tau_{l,r'}=\tau_{l',r}, \ \mbox{and} \ \ l(\tau_{l,r'})=r(\tau_{l',r}).
$$ 
Using the non-crossing
property of the Brownian web paths (in $\Wl$ or $\Wr$),  
if $l$ and $r$ meet before time $U_{l,r'}\wedge U_{l',r}$, then 
the condition on the RHS of the former equivalence is satisfied.
Conversely, if  the condition on the RHS of the former equivalence is satisfied,
it is clear that $l$ and $r$ meet before time $U_{l,r'}\wedge U_{l',r}$.
This ends the proof of Lemma \ref{lem-lem}.

%$\sigma_{l,r} := \inf\{u > t : l(u) = r(u)\} < U$, 
%then $l$ and $r$ cannot enter a $(S,T)$-relevant separation point
%before time $\sigma_{l,r}$. At time $\sigma_{l,r}$, 
%if we write $\bar z$ to 
%be the location of the paths
%$l$ and $r$ at that
%time,
%we must have
%$l \sim^{\bar z}_{out} r$ (otherwise $\bar z$
%would be a separation point and we would have $l\sim_{in}^{\bar z} r$
%contradicting the definition of $\sigma_{l,r}$) and by a slight modification of 
%the arguments used in the proof of the first item of our lemma,
%$l$ and $r$ must lead to the same
%unique
%point in $V(z_0,T)$. 
%Hence, $z_-'\neq z_+'$ is equivalent to
%$l$ and $r$ to not meet before time $U$.

%If $l$ and $r$ meet before time $U$, then $l$ and $r$ form a (forward) wedge that can not  
%be entered from the outside by any of the dual paths starting from $U$. Conversely, 
%if $l$ and $r$ did not meet before time $U$, 

%but this directly 
%follows from the fact that
%$r$ and $l$ form a  wedge, and
%that
%backward Brownian net paths coincide with
%the set of backward paths that do not
%enter forward wedges from the outside --- see the second item of Theorem \ref{teo-sun-swart}
%reformulated backward in time.

%By definition of the $(S,T)$-relevant separation
%points, this implies (2), i.e. that
%the intermediary 
%nodes in $V(z_0,T)$
%connect to exactly
%two points.
%For $(0,pp)$ points, we use [SSS09, Lemma 2.7] 
%which states that $l<r$ on $(t,U)$
%iif there exists a dual
%path $\hat \pi \in \hat \Net^b$ starting at time
%$\hat \sigma_{\hat \pi} = U$
%and entering the point $z$. 

\end{proof}

As a direct corollary of Lemma \ref{lem-lem}(1) and (2), we deduce the following result.

\bcor\label{cor-eq}
Let $S<T$ be two deterministic times. For almost every realization of the Brownian net with killing,  for every $z_0=(x_0,S)$ (with $x_0$ random or deterministic), the out-degree of a point $z\in V^{z_0}(T)$ is at most equal to the number of pairs $(l,r)\in (\Wl,\Wr)$ such that $l \sim_{out}^z r$.
\ecor

\begin{proof}[Proof of Proposition \ref{reduced-graph-det-times}]

The fact that $z_0$ is ``nice'' is the content of Proposition \ref{finite-3.3}. We now turn to the proof of item 1. 
By Proposition \ref{special-net}, for any point of type $(o,p)$ or $(p,p)$,
there exists a unique pair $l,r$ such that 
$l\sim^{z_0}_{out}r$. By Corollary \ref{cor-eq},
this proves that 
the degree of $z_0$ is at most to 1. Since $z_0$ is a.s. not a killing point,
the out-degree is at least $1$, thus showing that the finite graph representation at any point of type $(o,p)$ or $(p,p)$
is simply rooted. Finally, since every deterministic point is a.s. of type $(o,p)$ (by Proposition \ref{special-net}),
the same property must hold at any deterministic point a.s.. This ends the proof of item 1.

\medskip

We now turn to the proof of items 2 and 3. 
We first claim that
any node
of type $(*,pp)$ --- where * can be either be $o$ (for roots of type (o,pp)) or 1 (corresponding to separation point
by Proposition \ref{special-net} and Proposition \ref{geo-net-sp}) ---
connects to one or two points. 
By definition of a point of type $(*,pp)$,
there exists exactly two distinct pairs 
$(l,r')$ and $(l',r)$ in $(\Wl,\Wr)$
starting from $z$
such that
$l\sim_{out}^z r'$ and $r'\sim_{out}^z l$. 
By Corollary \ref{cor-eq},
the out-degree is at most two, thus proving item 2. Further, by definition of a relevant separation
points, 
the out-degree for such a point is at least two, and thus the out-degree 
of a relevant separation point 
is exactly equal to $2$. This ends the proof of item 3.

\medskip

It remains to show item 4.
The fact that $S_2(S,T)$
is a set of $(o,pp)$ points follows from 
Corollary \ref{cor-eq} and the fact that at deterministic times,
points are either of type $(o,p), (p,p)$ or $(o,pp)$.
Next, we show  that $S_2(t,T)$
is locally finite.
Lemma \ref{lem-lem}(3) (in the special case $x_-=x_+$) easily implies that 
if $x$ is in $S_{2}(S,T)$
then there exists a time $t_x$ --- either a killing
time for a path starting from time $S$,
or $t_x=T$ --- 
such that the rightmost and leftmost paths $l,r$ starting from $(x,S)$ 
do not meet before time $t_x$.

Let $L>0$. Since ${\cal M}^{\kappa}(S)$ is locally finite (by Proposition \ref{p214}),
the previous argument and 
the equicontinuity of the net paths 
imply that there exists a rational number $q>0$  such that $[-L,L]\cap S_2(S,T)$ is a subset of   
\be\label{3.20}
 \{x \ :  \ \mbox{the leftmost and rightmost path  in $\Net^b$ starting from $(x,S)$ do not meet on $(S,S+q)$} \};
\ee
e.g., take $q<\inf\{s>S \ : \ (y,s)\in{\cal M}^\kappa \mbox{ and } \exists \pi\in\Net^b, \ \sigma_\pi=S, \ \pi(S)\in[-L,L], \pi(s)=y\}$.
Since a.s. for every realization of the net, the  set  (\ref{3.20}) is locally finite for every deterministic $q$,
this proves that $[-L,L]\cap S_2(S,T)$ is finite a.s. and thus, $S_2(S,T)$ is locally finite.   
(This can easily be seen as a direct consequence from the fact that the dual branching-coalescing point set (the dual branching-coalescing point set being defined from the dual Brownian net as defined in \cite{SS07}) starting from $t+q$ is finite at time $t$ by Proposition \ref{bc-set}.)

%such that $z$ is the intersection point of a backward path of the standard 
%backward Brownian net starting at time $t_z$ and the line $\R\times\{S\}$.
%Recall that ${\cal M}^k(S)$ --the set of killing points  attained 
%by the set of paths in $\Net^b$-- is locally finite,
%and further, for every $s>S$, the backward net
%paths starting from $\R\times\{s\}$
%intersect $\R\times\{S\}$
%at locally finitely many points. 
%The equicontinuity of the paths 
%in the Brownian net then
%implies that  $S_2(t,T)$
%is locally finite.

To conclude the proof of item 4, it remains to show
that $G^{z_-}(T)$ and $G^{z_+}(T)$
are identical (up to some relabeling of the roots)
if $z_-=(x_-,S)$ and $z_+=(x_+,S)$
are located (strictly) between two consecutive 
elements of $S_2(S,T)$. We give an argument by contradiction
and assume that the roots of 
$G^{z_-}(T)$ and $G^{z_+}(T)$ do not connect to the same point (i.e., $z_l\neq z_r$).
%or equivalently (by our lemma --- see (2)(i))
%that $l$ and $r$, respectively the leftmost and rightmost path starting from $z_-$ and $z_+$, 
%do not meet before time $U$ and $l(U) < r(U)$.
We now show that this implies 
the
existence of a point $z_0=(x_0,S)$
between $z_-$ and $z_+$ (i.e. $x_-\leq x_0 \leq x_+$)
that
connects to two distinct points. 

Since $[x_-,x_+]\cap S_2(S,T)=\emptyset$, we can partition the segment 
$[x_-,x_+]$ into equivalence classes, where  $x_1,x_2$ are equivalent
iff they connect to the same vertex (i.e., to the same element of $L^\kappa(S,T)\cup R^\kappa(S,T) \cup K^\kappa({S,T})$ as defined in (\ref{def-kl})).
Since $L^\kappa(S,T)\cup R^\kappa(S,T) \cup K^\kappa({S,T})$ is locally finite, the equicontinuity of the net paths implies that 
the number of classes is finite. From there, it is straightforward to check that there exists $L\in[x_-,x_+]$, 
$z_1$ and $z_2$ two elements in $L^\kappa(S,T)\cup R^\kappa(S,T) \cup K^\kappa({S,T})$, 
and two sequences of net paths $\{\pi_1^n\}$
and $\{\pi_2^n\}$ with $\sigma_{\pi_i^n}=S,i=1,2$ such that 
$$
(\pi_1^n(S),S) \mbox{ connects to } z_1, \ (\pi_2^n(S),S) \mbox{ connects to }  z_2 \ \ \mbox{and} \ \ \lim_{n\to\infty} \pi_1^n(S), \pi_2^n(S)=L.
$$
For $i=1,2$, going to a subsequence if necessary, $\pi_i^n$ converges to a net path $\pi_i^\infty$
starting from $(L,S)$ and passing trough the point $z_i$. We will now show that $\pi_i^\infty$
connects $z_0$ to $z_i$ (in order to get the desired contradiction). In order to do so,
it is sufficient to prove that $\pi_i^\infty$ 
does not encounter any element in $L^\kappa(S,T)\cup R^\kappa(S,T) \cup K^\kappa({S,T})$ before entering $z_i$.

Again, we argue by contradiction. 
Let us assume that $\pi_i^\infty$
connects $(L,S)$ to a point $\bar z=(\bar x, \bar t)$ before entering $z_i$. Since $z\to\bar z$, by the second item of Lemma \ref{lem-lem}, 
there must exist a left-right pair
$(l,r)$ 
 such that $l\leq \pi_i^\infty \leq r$ on $[0,\bar t]$. Let $q$  be any rational number 
 in $(S,\bar t)$.
By Proposition \ref{bc-set},
the set $\xi^S(q)$ is locally finite a.s., 
and since $\pi_1^n$ converges to $\pi_1^\infty$, $\pi_1^n(q)$ and $\pi_1^\infty(q)$ coincide for $n$ large enough.
This implies that $\pi_1^n$ is ``trapped'' between $l$ and $r$ after a finite rank, i.e., that 
$l\leq \pi_i^n \leq r$ on $[q,\bar t]$. 
(Here we used the property that a net path can not cross a left-most path from the right, and a rightmost path from the left -- see \cite{SS07}).
This implies that $\pi_i^n$ enters the point $\bar z$ for $n$ large enough, which is the desired contradiction. This completes the proof of item 4 of Proposition \ref{reduced-graph-det-times}.
\end{proof}

%\blem
%Let $S$ and $T$ be two deterministic times.
%Let $(l,r)\in(\Wl,\Wr)$ starting at some point $(x^l,S)$ and $(x^r,S)$  with $x^l \geq x^r$. Then 
%Finally define $U=\min(T,e_l,e_r)$, $e_l$ and $e_r$ are the respective killing times of
%the two paths $e_l$ and $e_r$. 

%Then $l<r$ on $(S,U)$ iff there exists $\pi \in \hat \Net^b$
%such that $\sigma_\pi = U$ and $x^l \leq \hat \pi(S) \leq x^r$.
%\elem

Let us take a point $x$ 
in $S_2(S,T)$. By Proposition \ref{reduced-graph-det-times}, $(x,S)$ must be of type $(o,pp)$, and  connects to two distinct points. By Lemma \ref{lem-lem}(3), 
there exist two points $z',z''$ with $z'\neq z''$,
such that 
if $(l,r')$ and $(l',r)$ are the two distinct outgoing equivalent paths in $(\Wl,\Wr)$
exiting the point $(x,t)$, ordered from left to right,
then
$$
(x,S)\rightarrow_{l,r'} z'=(x',t')  \ \ \ (x,S)\rightarrow_{l',r} z''=(x'',t'').
$$
%We note that $l$ (resp., $r$) is the leftmost (resp., rightmost)
%path starting from $z = (x,S)$.

One can always decompose $G^{(x,S)}$ into 
two simply rooted graphs, a left and a right component  $G^{(x,S)}_l(T)$ and $G^{(x,S)}_r(T)$: We
define 
$G^{(x,S)}_l(T)$
as the subgraph whose set of vertices is the union 
of $(x,S)$ with the vertices of the (directed) subgraph originated from $z'$.
$G^{(x,S)}_r(T)$
is defined analogously using $z''$. For a pictorial representation 
of the next proposition, we refer the reader to Fig. \ref{reduced-graph}.

\bprop
\label{double-graphs}
Let  $x_l,x,x_r$ be three consecutive points 
in $S_2(S,T)$ and let $G^{(x,S)}_l(T)$
and $G^{(x,S)}_r(T)$ be respectively the left and right component
of the graph $G^{(x,S)}(T)$.
\beqnn
\forall y \in (x_l,x), & \text{$G^{(y,S)}(T)$ and $G^{(x,S)}_l(T)$ only differ by their root}, \\
\forall y \in(x,x_r),  & \text{$G^{(y,S)}(T)$ and $G^{(x,S)}_r(T)$ only differ by their root}.
\eeqnn
%Furtermore, if $(x,S)$ is connected to the point $z=(x,t)$, any path in $\Net^{b,k}$ connecting $(x,S)$ to $z$ is squeezed between $l'$ and $r'$, i.e.,
%l'(s)\leq \pi(s)\leq r'(s)$ for $s\in[S,t]$.  \ed to justify uniqueness of pair and the last claim} \marginpar{pb with $r_n$}
\eprop

\begin{proof}
We only prove the first relation. The second relation
can be proved along the same lines. 
In the following, as before, $(l,r')$ and $(l',r)$ will denote the two outgoing equivalent paths in $(\Wl,\Wr)$
exiting the point $(x,S)$, ordered from left to right. In particular, it is not hard to see from Proposition \ref{special-net},
that $l$ (resp., $r$) is the leftmost (resp., rightmost) path starting from $(x,S)$.

Take $x_n\uparrow x$, and for every $n$, let $(l_n,r_n)$
be an equivalent pair of outgoing paths starting from $(x_n,S)$. 
By Proposition \ref{reduced-graph-det-times}(4),
all the points in $(x_l,x)$ have the same finite graph
representation, up to relabeling of their root. As a consequence,
we need to prove that 
if $x_n\uparrow x$ then 
$G^{(x_n,S)}(T)$ and  $G^{(x,S)}_l(T)$ coincide up to their root, after some rank, i.e., we need 
to show that 
$(x_n,S)\rightarrow_{(l_n,r_n)}(x',t')$ (after some rank),
where $z'=(x',t')$
is the only point in $G_l^{(x,S)}(T)$
connected to $(x,S)$.

According to Lemma \ref{lem-lem}(3),
and since $U_{l_n,r'}\geq U_{l_n,r_n}\wedge U_{l,r'}$  (as defined in (\ref{Ulr})),
we need to prove that $l_n$ and $r'$ meet
before time $U_{l_n,r'}$.
Let us first show that $l_n$ coalesces with $l$ at some time $\tau_n\rightarrow S$.
By compactness of the Brownian web, $l_n$ converges to some path $\bar l\in \Wl$ starting from $(x,S)$. 
Since $l_n$ does not cross the path $l$, we have $l_n \leq l $, implying that
$\bar l \leq l$. Since $l$ is the leftmost path
starting from $(x,S)$, this yields $\bar l = l$, and thus $l_n$ converges to $l$. Using an argument analogous to the one given in the last paragraph of the proof of Proposition \ref{reduced-graph-det-times},
it is not hard to show 
that $l_n$ coalesces with $l$ at some $\tau_n\rightarrow S$, as claimed earlier.
Next,
since $r_n$ can not cross $r'$, we have
$r_n \leq r'$. This  implies that
for any $s \geq \tau_n$, 
$$
l(s) = l_n(s)\leq r_n(s)\leq r'(s).
$$
On the other hand, since
$l \sim_{out}^z r'$, we can choose a time $\tilde t$ arbitrarily close to $S$,
such that $l(\tilde t)=r'(\tilde t)$. In particular, for  $n$ 
large enough such that 
$\tau_n \leq \tilde t$,
the previous inequality implies that
$l_n$ and $r'$ must have met before time $\tilde t$.
Finally, since the set 
of killing points $\M^\kappa(S)$ is locally finite,
and since $\tilde t$ can be chosen arbitrarily close to $S$,
we can choose $n$ large enough such that 
$l_n$ and $r'$
meet
before any path squeezed between those two paths encounters a killing point, i.e., before time $U_{l_n,r'}$.
As explained earlier, by  Lemma \ref{lem-lem}(3), this implies that for $n$ large enough, the graphs
$
G^{(x_n,S)}(T)
$
and
$
G_l^{(x,S)}(T)
$
coincide modulo their root. This ends the proof
of Proposition \ref{double-graphs}.

\end{proof}

\subsubsection{Proof of Theorem \ref{teo1} (1)--(2)}

Let $t$ be a deterministic time.
Proposition  \ref{finite-3.3} implies that for any point $(x,t)$ (with $x$ deterministic or random) is ``nice''
and has a well defined pre-coloring $\bar \theta(x,t)$. 

\blem\label{pr-eq-def}
At any deterministic time $t$, for almost every realization of the CVMP,
for every $x$ (deterministic or random), 
the pre-coloring $\bar \theta(x,t)$
coincides with the coloring $\theta(x,t)$.
\elem
\begin{proof}
The key point
of our proof
is that if two graphs only differ by their
root,
they must have the same pre-coloring 
(by the very definition of the coloring algorithm as defined 
in Section \ref{coloring-algo}).
We denote by $\hat S_2(0,t)$
the backward analog of $S_2(0,t)$ as defined
in Proposition \ref{reduced-graph-det-times}(4), i.e.
$$
\hat S_2(0,t) \ = \ \{x \ : \ \hat G^{(x,t)}(0) \ \ \mbox{is not simply rooted.}\}
$$

We first show that Lemma \ref{pr-eq-def}  holds for 
$z=(x,t)\notin \hat S_2(0,T)$. 
Since $\hat S_2(0,T)$ is a.s. locally finite, Proposition
\ref{reduced-graph-det-times}(4) --- in its backward formulation ---
implies that
there exists $\delta>0$
small enough s.t.
$$
\forall x', \ \ \text{s.t.}  \ \ |x'-x|<\delta, \ \ \  \hat G^{(x,t)}(0) \ \ \text{and} \ \ \hat G^{(x',t)}(0) 
\ \ \ \text{only differ by their root}
$$
As a consequence, for any $x_n\rightarrow x$,  $\bar \theta(x_n,t)$ is stationary after a certain rank, and thus
$$
\theta(x,t) = \lim_{n} \bar \theta(x_n,t) = \bar \theta(x,t).
$$

Let us now consider
$z\in \hat S_2(0,T)$. 
First, by definition
of the precoloring $\bar\theta(\cdot)$, we must have
$\bar \theta(z)=\{c_l\}\cup\{c_r \}$, with 
$c_l$ (resp., $c_r$)
being the color given by our coloring algorithm
applied on the graph $\hat G_l^{x,t}(0)$ (resp., $\hat G_r^{x,t}(0)$) --- those two
graphs being defined as in Proposition \ref{double-graphs} in the backward context. 
(Note that those two colors are potentially equal.)
Let us now consider
a subsequence $x_n\rightarrow x$, with $x_n\neq x$.
We can decompose this sequence into two subsequences $x_n^l<x$
and $x_n^r>x$. For the sequence $x_n^l$,
Proposition \ref{double-graphs}, implies that
for $n$ large enough, $\bar \theta(x_n^l,t)= c_l$,
while for the sequence $x_n^r$, we must have 
$\bar \theta(x_n^r,t)= c_r$. 
As a consequence, we get
$
\theta(z) = \bar\theta(z).  
$
\end{proof}

Theorem \ref{teo1} (1)--(2) is then a direct consequence of the following proposition.

\bprop\label{prop1-3}
At any deterministic time $t$, for almost every realization of the CVMP:
\ben
\ii 
$|\theta(x,t)|=1$ a.s., for any deterministic point $(x,t)$.
Furthermore, the coloring $\theta(x,t)$
is obtained by applying the coloring algorithm to the reduced graph starting from $(x,t)$ (see Section \ref{coloring-algo} for a definition of the coloring algorithm).
\ii For every $x$, $|\theta(x,t)|\leq 2$.
\ii The set of points such that $|\theta(x,t)|  \ = \ 2$ is locally finite. Moreover,
the color between two consecutive points of this set
remains constant. 
\een
\eprop

\begin{proof}
The first item of Proposition \ref{prop1-3} is a direct consequence of Proposition \ref{reduced-graph-det-times}(1) and Lemma \ref{pr-eq-def}. 
For the second item, we first note that Proposition \ref{reduced-graph-det-times} implies that for any point $(x,t)$ (with $x$ deterministic or random), the out-degree of the root of $\hat G^{(x,t)}(0)$ is either $1$ or $2$. Furthermore, the definition of our pre-coloring
implies $|\bar \theta(x,t)|$ is less or equal to the out-degree of the reduced  $\hat G^{(x,t)}(0)$. 
The second item then follows by a direct application of Lemma \ref{pr-eq-def}.

\medskip

We now turn to the third item of Proposition 3.
Since 
the $\hat G^{(x,t)}(T)$'s for the $x$'s between two consecutive
point of $\hat S_2(0,T)$
only differ by their root,  
$\bar\theta(x,t)$ (and hence $\theta(x,t)$) must remain constant between two elements of $\hat S_2(0,t)$.
Since  $\hat S_2(0,t)$ is locally finite (Proposition \ref{reduced-graph-det-times} (4)),
this implies that the set of points such that $|\theta(x,t)|  \ = \ 2$ is locally finite.

Finally,
$|\bar \theta(x,t)|=2$ (and hence $\theta(x,t)$)
iff $(x,t)\in\hat S_2(0,t)$
and
the colors induced by
the graphs $\hat G_l^{(x,t)}(0)$
and $\hat G_r^{(x,t)}(0)$ differ (where $\hat G_l^{(x,t)}(0)$
and $\hat G_r^{(x,t)}(0)$ are defined as in Proposition \ref{double-graphs}).
By Proposition \ref{double-graphs},
for every $x_n^l\uaw x$
(resp., $x_n^r\daw x$)
the color $\bar \theta(x_n^l,S)$ (resp., $\bar \theta(x_n^r,S)$)
must coincide with the color induced by  
$\hat G_l^{(x,t)}(0)$
(resp., $\hat G_r^{(x,t)}(0)$) after a certain rank. This
implies that $|\theta(x,t)|=2$ iff
$x$ delimits two color intervals. 

\end{proof}

%We note that Proposition \ref{simply-rooted}(1) together
%with a direct application
%of Fubini's theorem imply that the set 
%of $z$'s for which 
%$\hat G^{z}(0)$
%is a simply rooted
%has full
%Lebesgue measure.

\section{Convergence to the continuum model. Proof of Theorem \ref{teo1} (3)}
\label{Invariance::Principle}
Let $\{\eps_n\}$ be a sequence of positive numbers such that $\eps_n\to 0$. As in Theorem \ref{teo1}, we assume that there exists $b,\kappa\geq0$,
and two positive sequences $\{b_n\}$ and $\{\kappa_n\}$ such that $b_n/\eps_n \to b$ and $\kappa_n/\eps_n^2\to \kappa$. We also assume that the boundary and bulk mechanisms $\{g_{i,j}^{n}\}_{i,j\leq q}$ and $p^{n}$ 
converge to a sequence of limiting probability distributions  $\{g_{i,j}\}_{i,j\leq q}$ and $p$  when $n\to\infty$.

Let $z_n^1,\cdots, z_n^k\in S_{\eps_n}(\Z_{odd}^2)$ be such that for every $i=1,\cdots,k,  \ \lim_{n\to\infty} z_n^i=z^i$ for some $z^i\in\R^2$. 
Our goal is to show that the distribution of the coloring of the points $(z^1_n,\cdots, z^k_n)$
for the discrete model labelled by $n$
converges to the distribution of the coloring of $(z^1,\cdots, z^k)$
at the continuum. 

By Proposition \ref{prop1-3}(1), the coloring of a deterministic point $z$ in the CVMP 
is obtained by applying the coloring algorithm to 
the reduced graph $\hat G^{z}(0)$ (the backward of the object introduced in Definition \ref{def:reduced-graph}). By Proposition \ref{reduced-graph-df2},
the color of a point in the discrete model can be recovered by applying the same procedure
to the discrete backward net with killing.

From the previous observation, and since the same property holds at the discrete level  (see Proposition \ref{reduced-graph-df2}), 
our convergence result easily
follows from the convergence (in law) of $\cup_{i=1}^k \hat G^{z^i_n}_n(0)$ to
$\cup_{i=1}^k \hat G^{z^i}(0)$, where  $\hat G^{z^i_n}_n(0)$ is the backward reduced graph
starting from $z_n^i$
with  time horizon $0$ and constructed from the (backward) discrete net with killing $S_{\eps_n}(\hat \cU^{b_n,\kappa_n})$
restricted to the upper half-plane. In this section, and for the sake of clarity, we will only prove
the result for $k=1$ (i.e., the convergence of one dimensional marginals of the coloring). The general case
involves more cumbersome notation, but can be treated along exactly the same lines.

Let $z_n\in S_{\eps_n}(\Z_{odd}^2)$ such that $z_n$ converges to $z=(x,T)$. We need to show that  $\hat G^{z_n}_n(0)$ converges to 
$\hat G^{z}(0)$. By reversing the direction of time and using translation invariance (in the $x$ and $t$ directions), 
it is equivalent to show the following result in the forward-in-time setting:

\bprop\label{mm1}
Let $z_n\in S_{\eps_n}(\Z_{odd}^2)$ such that $z_n\to (0,0)$. For every $T>0$,
$$
G_n := G^{z_n}_n(T ) \ \to \ G:= G^{(0,0)}( T ) \ \  \mbox{in law,}
$$
where $G^{z_n}_n(T )$ is the reduced graph at $z_n$
with time horizon $T$ for the (rescaled) forward discrete net with killing $S_{\eps_n}(\cU^{b_n,\kappa_n})$.
\eprop

\subsection{ Proof of Proposition \ref{mm1}}   

We start by introducing some notation. Define
$$
\Net^{b,\kappa}(\Sigma_0) \ := \  \{\pi\in\Net^{b,\kappa} \ : \ \sigma_\pi=0 \}
$$ 
and for any path $\pi\in\Net^{b,\kappa}(\Sigma_0)$, 
let $r(\pi)$ be the (a.s. finite) sequence 
of points in
\be\label{def:V}
L^\kappa({0,T}) \bigcup K^\kappa({0,T}) \ \bigcup \ R^\kappa(0,T)
\ee
(as defined in (\ref{def-kl}))
visited by the path $\pi$, the points in the sequence being listed in their order of visit. (In particular, the final point of $r(\pi)$
must be a leaf, and every other point is a relevant separation point). In the following,
$r(\pi)$ will be referred to as the finite graph representation of $\pi$.

Analogously to the continuum level, we assume that 
the discrete net with killing $\cU^{b_{n},\kappa_n}$ is coupled with 
a discrete net $\cU^{b_n}$ (where $\kappa_n=0$). (See Section \ref{onenet} for more details).
$S_{\eps_n}(\cU^{b_{n},\kappa_n}(\Sigma_0))$ will denote the discrete analog of $\Net^{b,\kappa}(\Sigma_0)$, and for 
any path $\pi_n\in S_{\eps_n}(\cU^{b_n,\kappa_n}(\Sigma_0))$,
$r_n(\pi_n)$ will denote the finite graph representation at the discrete level.

\medskip
For any point $(x,t)$ and $s\leq t$, define
$$
d^s((x,t)) \ := \ \inf\{|y-x| \ : \ y\neq x \mbox{ and } y\in\xi^s(t) \}.
$$
and from the Brownian net (with no killing), define the point process 
\be\label{bR}
\bar R(s,t) \ := \ \sum_{z\in R(s,t)} \delta_{z,d^s(z)}.
\ee
In words, $\bar R(s,t)$ records the location of the $(s,t)$--relevant separation points (in the standard Brownian net (with no killing)), together with a measure of
``isolation'' around each of those points. Analogously, we define $\bar R_n(s,t)$ for the rescaled discrete net $S_{\eps_n}(\cU^{b_n})$.

\medskip

Let $L>0$. Recall the definition of $\theta^{i}$ given in (\ref{thetai}) for $S=0$, i.e. $\theta^0=0$ and
for every $i\geq0$
$$
\theta^{i+1} \ := \ \inf\{t > \theta^i \ : \ \exists x\in(-L,L), \ (x,t)\in{\cal M}^\kappa(\theta^i)\}.
$$
where we recall that ${\cal M}^\kappa(S)$ is the set of killing points
attained by some path in $\Net^{b,\kappa}$ starting at time $S$. Define 
\beqn\label{def:N}
N=\inf\{i\geq 1: \theta^i\geq T\}.
\eeqn
For every $s<t$,  define
$$
\square_{s,t}^L\ := \ (-L,L)\times [s,t],
$$
and for every $i\geq 0$, 
\beqnn
{\cal R}^{i+1} & :=   & R(\theta^{i}\wedge T,\theta^{i+1}\wedge T)\cap \square_{0,T}^L \\
\\
{\cal L}^{i+1} & := & \left\{
\begin{array}{cc}
\{(x,\theta^{i+1})\in{\cal M}^{\kappa}(\theta^i) \ : \  x\in\xi^0(\theta^{i+1})\cap(-L,L)\} & \mbox{if $\theta^{i+1}<T$} \\
\{(x,T) \ : \ x\in\xi^0(T)\cap(-L,L)\} & \mbox{otherwise}.
\end{array} \right.
\eeqnn
Note that when $\theta^{i+1}<T$, ${\cal L}^{i+1}$ is a single killing point a.s.. Analogously to (\ref{bR}),
for every $i\geq 1$, we also define
\begin{eqnarray*}
\bar {\cal R}^{i}  \ := \ \sum_{z\in   {\cal R}^i}  \ \delta_{z,d^0(z)}, \ \ 
% \bar R(\theta^{i}\wedge T,\theta^{i+1}\wedge T)  \ \cap \ \square^L_{\theta^{i}\wedge T,\theta^{i+1}\wedge T}\\
\bar {\cal L}^{i} \ := \ \sum_{z\in {\cal L}^i} \  \delta_{z, d^0(z)}. 
\end{eqnarray*}
(To ease the presentation, $L$ and $T$ are not explicit in the notation). 
Finally,  $\theta^i_n, N_n, {\cal R}_n^{i} ,\bar {\cal R}_n^{i}$ and ${\cal L}_n^{i}, \bar {\cal L}_n^{i}$ will denote the analogous quantities for the rescaled 
process $S_{\eps_n}(\cU^{b_n}, \cU^{b_n,\kappa_n})$.  
As in the proof of Proposition \ref{finite-SP}, the previous definitions are motivated by the fact that if
the paths starting from $(0,0)$ of the Brownian net (with no killing) between time $0$ and $T$ 
are all contained in the box $\square_{0,T}^L$, i.e.,
$$
\{(\pi(s),s) \ : \ s\in[0,T], \ \pi\in\Net^b((0,0)) \}\subset \square_{0,T}^L,
$$
then  any intermediary point  (resp., leaf) in $G$ must be a point in
$\cup_i {\cal R}^i$ (resp., $\cup_i {\cal L}^i$). See Fig. \ref{desin}. 
The analogous property holds at the discrete level.

\medskip

The next two results will be instrumental in the proof of Proposition \ref{mm1}.
Informally, Proposition \ref{isolation} states that relevant separation points are isolated 
from the left and from the right.
   Proposition  \ref{prop-principal} implies that the same holds at the discrete level;
furthermore it shows that each continuum separation point is the limit of a single discrete separation point,
which is essential for the proof of  Proposition \ref{mm1}, the convergence of reduced graphs.

\bprop\label{isolation}
For almost every realization of  $\Net^{b,\kappa}$, $d^0(z)>0$ for every $z$ in $\cup_{i=1}^{N} ( {\cal R}^{i} \cup {\cal L}^{i})$.
\eprop
\begin{proof}
By Proposition \ref{special-net} and \ref{geo-net-sp} and the construction of the killing points, $z$ is of type $(p,*)$ -- with $*$ being equal either to $p$ and $pp$. Proposition
\ref{isolation} is then the content of Proposition 3.11(c) in \cite{SSS08a}.
\end{proof}

\bprop\label{prop-principal}
Let $\bar {\cal P}$ be the space of Radon measures on $\R\times\R^+\times\R^+$ equipped with the vague topology. 
For every $L>0$ and $T>0$, the random variable $N$ is finite and for every $1\leq i\leq N$, 
$\bar {\cal R}^{i}$ and  $\bar {\cal L}^{i}$ are finite measures a.s.. Further,   
$$
\left\{\left(S_{\eps_n}(\cU^{b_n,\kappa_n}),S_{\eps_n}(\cU^{b_n,\kappa_n}(\Sigma_0)),\  \sum_{i=1}^{N_n} \bar {\cal R}_n^{i}, \ \sum_{i=1}^{N_n} \bar {\cal L}_n^{i} \right) \right\}_{n\geq1} \ \to\ \ 
\left(\Net^{b, \kappa}, \Net^{b, \kappa}(\Sigma_0), \  \sum_{i=1}^N \bar {\cal R}^{i}, \ \sum_{i=1}^N \bar {\cal L}^{i}\right) \ \ \mbox{in law,}
$$
where the convergence is meant in the product topology     ${\cal H} \times  {\cal H}\times \bar \cP\times \bar \cP$.    
\eprop

\bprop\label{st2}
Let $z_n=(x_n,t_n)\in S_{\eps_n}(\Z_{odd}^2)$ such that $z_n\to (0,0)$. As $n\to \infty$,
$$
\P\left(\mbox{$G^{z_n}_n( T )$ and $G^{(0,0)}_n(T)$ are isomorphic}\right) \to 1
$$
\eprop

The proofs of Propositions \ref{prop-principal}  and Proposition \ref{st2} are rather technical, and for the sake of clarity  they are postponed until Sections \ref{proof-of} and 
\ref{proof-of2}.

\bigskip
  
\begin{proof}[Proof of Proposition \ref{mm1} (assuming Propositions~\ref{prop-principal}  and \ref{st2})]
%(assume Proposition \ref{prop-principal} )
By Proposition \ref{st2}, it is enough to show Proposition \ref{mm1}  for $z_n=(0,0)$.

Let $A_L$ be
the set of realizations such that $\{(\pi(s),s) \ : \ s\in[0,T], \ \pi\in\Net^b((0,0)) \}\subseteq \square_{0,T}^L$. 
By equicontinuity of the net paths, the probability of $A_L$ goes to $1$ as $L\to\infty$. Thus, to prove Proposition \ref{mm1}, it is enough to show that for every $L>0$, there exists a coupling
such that for almost every realization in $A_L$, $G_n$ and $G$ are isomorphic for $n$ large enough.

By the Skorohod Representation Theorem and Proposition \ref{prop-principal}, for every $L>0$,
there exists a coupling between the discrete and continuum levels such that 
$$
\left\{\left(S_{\eps_n}(\cU^{b_n,\kappa_n}(\Sigma_0)),\  \sum_{i=1}^{N_n} \bar {\cal R}_n^{i}, \ \sum_{i=1}^{N_n} \bar {\cal L}_n^{i} \right) \right\}_{n\geq1} \ \to\ \ 
\left(\Net^{b, \kappa}(\Sigma_0), \  \sum_{i=1}^N \bar {\cal R}^{i}, \ \sum_{i=1}^N \bar {\cal L}^{i}\right) \ \ \mbox{a.s.}
$$
  
Until the end of the proof, we  work under this coupling. 
For any $n\in\N$ and any realization in $A_L$, for any discrete
point $z_n\in \cup_{i\geq1} \ ({\cal R}_n^{i} \cup \ {\cal L}_n^{i})$, define $\psi_n(z_n)$ to be the closest point in 
$\cup_{i\geq1} \ ({\cal R}^{i} \cup \ {\cal L}^{i})$
 to $z_n$
(it is easy to see that $\psi_n(z_n)$ is well
defined a.s.).
As argued above,
we need to show that for almost every
realization in $A_L$,
for $n$ large enough,
the graphs $G_n$ and $G$ are 
isomorphic under this mapping.

\medskip

%Let us first derive a criterium for $G_n$ and $G$ to be isomorphic
%after a certain rank: Namely, we will show that  $\psi_n$ 
%defines a graph isomorphism from $G_n$ to $G$ after a finite rank,
%if for any sequence
%$\{\pi_{n}\}_{n\geq1}$ in $S_{\eps_n}(\cU^{b_n,k_n}((0,0)))$, and for any accumulation point
%$\pi\in\Net^{b,k}((0,0))$ of the sequence,
%the finite graph representation
%of $\pi$ and $\pi_{n}$ are identical 
%(up to the mapping $\psi_n$)
%for $n$ large enough.
%In order to see that, 
We start by deriving a criterium for having asymptotic isomorphism
of $G_n$ and $G$ by $\psi_n$. Let us assume that for every $M\in\N$,
there is $n\geq M$ such that
the image of $G_n$ by $\psi_n$
does not coincide with $G$.
Going to a subsequence if necessary, 
for every $n\in\N$, there exists $\pi_{n}\in S_{\eps_n}(\cU^{b_n,\kappa_n}((0,0)))$
such that the image of the finite graph representation $r_n(\pi_n)$
does not match any path of the continuum
graph $G$ (where $\cU^{b_n,\kappa_n}((0,0))$ denotes the set of discrete paths starting from the origin).
Since $\{S_{\eps_n}(\cU^{b_n,\kappa_n}(\Sigma_0))\}_{n\geq0}$
converges to $\Net^{b,\kappa}(\Sigma_0)$ a.s. in the $(\h,d_{\cal H})$ topology,
the sequence $\{S_{\eps_n}(\cU^{b_n,\kappa_n}(\Sigma_0))\}_{n\geq0}$
is tight. This implies
that
$\cup_{n}  \ S_{\eps_n}(\cU^{b_n,\kappa_n}(\Sigma_0))$
is a compact set of paths. Thus, again going
to a subsequence if necessary,
$\{\pi_{n}\}$
converges to a path $\pi\in\Net^{b,\kappa}((0,0))$, 
and the image of the finite graph representation $r_n(\pi_n)$ by the map $\psi_n$
is not equal to $r(\pi)$. 
%On the other hand, 
%by the choice of our coupling and because the set defined in (\ref{def:V})
%is locally finite,
%the number of vertices in the reduced
%graph 
%$G_n$
%remains uniformly bounded in $n$. 
%Hence, 
%up to a subsequence,
%the finite
%graph representation of the $\pi_{n}$'s
%remain constant as $n$ goes to $\infty$, and does to coincide 
%with the finite graph representation of the limiting path $\pi$. 
%In conclusion, we just proved
%that if $G_{n}$
%does not converge
%to $G$, 
%one can always find a sequence of paths $\pi_{n}$
%approximating $\pi$, but whose finite
%graph representation does not approach the one of $\pi$. 

As a consequence, in order to
prove  
Proposition \ref{mm1}, it is sufficient to prove 
that conditional on $A_L$, for any subsequence of discrete paths $\{\pi_n\}_{n\geq1}$ with $\pi_n \in S_{\eps_n}(\cU^{b_n,\kappa_n}((0,0)))$ converging to
 $\pi\in\Net^{b,\kappa}((0,0))$, the discrete and continuum paths have the same finite graph 
representation (up to $\psi_n$) after a finite rank. 

\medskip

Let us now consider a sequence $\pi_{n}\in S_{\eps_n}(\cU^{b_n,\kappa_n}((0,0)))$ converging to $\pi$ (up to a subsequence), and let us
show that for $n$ large enough,
for any $z_n=(x_n,t_n) \in \cup_{i\geq1} \ ({\cal R}_n^i \cup {\cal L}_n^i)$
then $\pi_n$ hits $z_n$  if and only if $\pi$ hits the point $(x,t):=\psi_n(z_n)$. 
First,
\beqnn
\forall (x_n,t_n) \in \cup_{i\geq1} \ ({\cal R}_n^i \cup {\cal L}_n^i), \ (x,t)=\psi_n(z_n): & \\
 |\pi(t)-x| & \leq & |\pi(t)-\pi_n(t_n)| \ + \ |\pi_n(t_n)-x_n| \ + \ |x-x_n|, \\ 
 |\pi_n(t_n)-x_n| & \leq & |\pi_n(t_n)-\pi(t)| \ + \ |\pi(t)-x| \ + \ |x-x_n|.
\eeqnn
Define
$$
\gamma_n := \min_{z\in  \cup_{i=1}^N \ ({\cal R}^i \cup {\cal L}^i)} d^0(z) \ \wedge \ \min_{z_n\in  \cup_{i=1}^{N_n} \ ({\cal R}^i_n \cup {\cal L}^i_n)} d^0_n(z_n)
$$ 
On the one hand, since $N$ (as defined in (\ref{def:N})) is finite and the ${\cal R}^i$'s and ${\cal L}^i$'s 
are made of finitely atoms, Proposition \ref{isolation} implies that under our coupling
$\liminf \gamma_n =  \min_{z\in  \cup_{i=1}^N \ ({\cal R}^i \cup {\cal L}^i)} d^0(z) >0$ a.s..

On the other hand, 
$\{\pi_n\}$ converges to $\pi$ and $\max_{z_n} \rho(z_n,\psi_n(z_n))\to 0$ 
by Proposition \ref{prop-principal} (where the max is taken over $\cup_{i=1}^{N_n} \ ({\cal R}^i_n \cup {\cal L}^i_n)$).
Using the equicontinuity of the paths in the sequence $\{\pi_n\}$, 
we can choose $n$ large enough such that the first and last terms of the two latter inequalities
are less than $\gamma_n/3$, i.e., such that 
\beqnn
\forall (x_n,t_n)\in \cup_{i\geq1} \ ({\cal R}_n^i \cup {\cal L}_n^i), \ (x,t)=\psi_n(z_n): & \\
|\pi(t)-x| & \leq & \ |\pi_n(t_n)-x_n| \ + \ \frac{2\gamma_n}{3} \\ &\leq &    |\pi_n(t_n)-x_n| \ + \ \frac{2}{3} d^0(z)  \\ 
|\pi_n(t_n)-x_n| & \leq &   |\pi(t)-x| + \frac{2}{3} d^0_n(z_n).
\eeqnn
From the definition of $d^0$ and $d^0_n$, this easily implies that for $n$ large enough (so that the previous inequalities are satisfied), the following statement holds:
$$
\mbox{
for any $z_n\in \cup_{i\geq1} \ ({\cal R}_n^i \cup {\cal L}_n^i)$,
$\pi_n$ hits $z_n$  if and only $\pi$ hits the point $\psi_n(z_n)$,}
$$ 
as claimed earlier. Finally, for every realization in $A_L$,
and for $n$ large enough, we already argued that 
the set of vertices of $G$ (resp., $G_n$) is a subset of $\cup_{i\geq1} \ ({\cal R}^i \cup {\cal L}^i)$ (resp., $\cup_{i\geq1} \ ({\cal R}^i_n \cup {\cal L}^i_n)$).
Combining this with the previous statement implies that $n$ large enough, the finite graph representation of $\pi_n$ and $\pi$ coincides (up to $\psi_n$).
Proposition \ref{mm1} then follows by applying the criterion
for the asymptotic isomorphism derived earlier in the proof.
%It remains to prove Proposition \ref{prop-principal}.   

\end{proof}

It remains to prove Proposition \ref{prop-principal} and Proposition \ref{st2}.

\subsection{Preliminary work for the proof of Proposition \ref{prop-principal}}

 For the time being, we only consider the Brownian net $\Net^b$ (with no killing). 
For every $t>0$, let $\xi^0_n(t)$ and $R_n(0,t)$ be the discrete counterpart
(in the rescaled  net $S_{\eps_{n}}({\cal U}^{b_n})$)
of $\xi^0(t)$ and $R(0,t)$. Finally, $\bar R_n(0,t)$
is the discrete analog of $\bar R(0,t)$. 
The rest of this section
is dedicated to proving the following result, which is closely related to Proposition \ref{prop-principal}.
\bprop\label{prop:dc1}
For every $T>0$, $(S_{\eps_n}(\cU^{b_n}), \bar R_n(0,T)) \ \to \ (\Net^b, \bar R(0,T)) \ \ \mbox{in law}$, 
where the convergence in the product topology ${\cal H}\times\bar {\cal P}$.
\eprop

In order to prove Proposition \ref{prop:dc1}, we will need the following lemma.

\blem\label{lem-si}
For almost every realization,  $d^0(z)>0$ for every $z\in V:=R(0,T) \cup \{(x,T) \ : \ x\in\xi^0(T)\}$.
\elem

\begin{proof}
This a special case of Proposition~\ref{isolation} when $k=0$.    %(Proposition 4.2 not Lemma 4.2)?  
\end{proof}

\bcor\label{a-set}
For every $(z^1,\cdots,z^k)\in (\R^2)^n$, let 
\beqnn
A(z^1,\cdots,z^k) & := & \{\pi \in\Net^b(\Sigma_0) \ : \  \ r(\pi)=(z^1,\cdots,z^k)\} \\
B(z^1,\cdots,z^k) & := & \Net^b(\Sigma_0) \setminus A(z^1,\cdots,z^k).
\eeqnn
If $A(z^1,\cdots,z^k)\neq\emptyset$,  then $A(z^1,\cdots,z^k)$ is a    
compact set    and  $d_{\cal H}\left(A(z^1,\cdots,z^k), \ B(z^1,\cdots,z^k) \right)>0$.
\ecor

\begin{proof}
We first show that $A(z^1,\cdots,z^k)$ is a compact set of paths. 
Since $\Net^b$ is compact, it is is sufficient to prove that for any $\{p_l\}_{l\geq1}$ in $A(z^1,\cdots,z^k)$ converging to $\pi$ in $\Net^b(\Sigma_0)$, 
the finite graph representation $r(\pi)$ is given by $z^1,\cdots,z^k$. On the one hand, it is clear that  $\pi$
must hit the points $z^1,\cdots,z^k$. On the other hand, if $\pi$   touches    some $\bar z=(\bar x, \bar t)\in V$ distinct from $z^1,\cdots,z^k$,
we would have
$p_l(\bar t)\neq \bar x$, but $p_l(\bar t)\to \bar x$ as $l\to\infty$,
which would  imply that $d^0(\bar z)=0$.  
However, by Lemma \ref{lem-si},  $d^0(\bar z)>0$ a.s..
This shows that $A(z^1,\cdots,z^k)$ is a compact set of paths a.s.

   Next, since sets of the form $A(\bz^1,\cdots,\bz^k)$ with $(\bz^1,\cdots,\bz^k)\in \cup_{l\in\N^*} (\R^{2})^l$ are pairwise disjoint,    the distance between $A(z^1,\cdots,z^k)$
and
any finite union of sets of the form $A(\bz^1,\cdots,\bz^m)\neq \emptyset$ with $(\bz^1,\cdots,\bz^m)\neq (z^1,\cdots,z^k)$ is     strictly positive.
Furthermore,
by equicontinuity of the paths in $\Net^b$ and the local finiteness of $V$, the same property holds for any infinite union of those sets since one can restrict attention to a finite space-time box. 
Combining this with
the fact that $B(z^1,\cdots,z^k)$
can be written as the union of all the non-empty $A(\bz^1,\cdots,\bz^m)$'s with $(\bz^1,\cdots,\bz^m)\neq (z^1,\cdots,z^k)$
completes the proof of   
Corollary \ref{a-set}.

\end{proof}

Let ${\cal P}$ be the space of Radon measures on $\R\times\R^+$ equipped with the vague topology. From Proposition 6.14 in \cite{ScheSS08b}, it is already known that 
\be\label{cv-r}
(S_{\eps_n}(\cU^{b_n}), R_n(0,T))  \ \to \ (\Net^b ,R(0,T)) \ \ \mbox{in law,}
\ee
where the convergence is meant in the product topology ${\cal H}\times{\cal P}$. 
\blem\label{cv-xi-lemma}
For every deterministic $t>0$,
\be\label{cv-xi}
(S_{\eps_n}(\cU^{b_n}), \xi_n^0(t)) \ \to  \ (\Net^b, \xi^0(t)) \ \mbox{in law.}
\ee 
\elem
\begin{proof}
From the Skorohod representation theorem, there exists a coupling between the discrete and continuum levels such that 
$$
S_{\eps_n}(\cU^{b_n}) \ \to \ \Net^b \ \ \ \mbox{a.s..}
$$
Let $a<b\in\R$. The previous convergence statement entails that under our coupling:
$$
\liminf_{n\to\infty}  \ |\xi_n(t)\cap [a,b]| \ \geq \  |\xi(t)\cap [a,b]| \ \mbox{a.s.}
$$
On the other hand, using the same technic as in Proposition 4.3. in \cite{NRS15}, one can show that  
$$
\limsup_{n\to\infty}  \ \E(|\xi_n(t)\cap [a,b]|) \ = \  \E(|\xi(t)\cap [a,b]|).
$$
Combing the two previous results, for every interval $[a,b]$, we get 
$$
\lim_{n\to\infty}  \ |\xi_n(t)\cap [a,b]| \ = \  |\xi(t)\cap [a,b]| \ \mbox{a.s.,}
$$
which ends the proof of the lemma.
\end{proof}

We will need one more ingredient that is a simple extension of
Lemma 6.7 in \cite{ScheSS08b}.   

\blem\label{lem-tech} 
 Let $E$ be a Polish space. Let $\{F_i\}_{i\in I}$ and $\{G_i\}_{i\in I}$ be two finite or countable collections of Polish spaces and for each $i\in I$, let $f_i : E \to F_i$ 
 and $g_i : E \to G_i$ be a measurable functions. Let $X,X_k,Y,Y_k$ and $U_{k,i},V_{k,i}$ be random variables $\{k \geq 1, i \in I\}$ such that $X,X_k,Y,Y_k$ take values in $E$ and $U_{k,i},V_{k,i}$ takes values in $F_i$ and $G_i$ respectively. Then
 \begin{eqnarray*}
 \P((X_k, U_{k,i}) \in \cdot) \ \to   \P((X, f_i(X)) \in \cdot), \ \P((Y_k, V_{k,i}) \in \cdot) \ \to   \P((Y, g_i(Y)) \in \cdot) \ \forall i\in I \\
 \mbox{and }  \P((X_k, Y_{k}) \in \cdot) \ \to   \P((X, Y) \in \cdot) \\
 \mbox{implies} \ \  \P((X_k, Y_k, U_{k,i}, V_{k,i})_{i\in I} \in \cdot) \ \to \   \P((X, Y, f_i(X), g_i(Y))_{i\in I} \in \cdot)
 \end{eqnarray*}
 where the convergence is meant in the product topology.
\elem 
\begin{proof}
The convergence of $\{(X_k, U_{k,i})\}_{k\geq1}$ and $\{(Y_k, V_{k,i})\}_{k\geq1}$
implies that the sequence $\{(X_k, (U_{k,i})_{i\in I},Y_k, (V_{k,i})_{i\in I})\}_{k\geq1}$ is tight. Let 
$(X_\infty, (U_{i})_{i\in I},Y_\infty, (V_{i})_{i\in I})$ be a subsequential limit. First,
$(X_\infty, Y_{\infty})=(X,Y)$ in law. Secondly, for every $i\in I$, $(X_\infty, U_i)$ is identical in law with $(X_\infty, f_i(X_\infty))$
which implies that $U_i=f_i(X_\infty)$ a.s. Likewise, $V_i=f_i(Y_\infty)$ a.s.. This completes the proof of the lemma.
\end{proof}
By Lemma 6.12 in \cite{ScheSS08b}, $(S_{\eps_n}(\cU^{b_n}), S_{\eps_n}(\cU^{b_n}(\Sigma_0)))$ converges to $(\Net^b, \Net^b(\Sigma_0))$ in $({\cal H},d_{\cal H})$.
Using Lemma \ref{lem-tech} (in the spacial case $X=Y=\Net^b$, and $U_{k,i}=V_{k,i}$) and the Skorohod Representation Theorem, (\ref{cv-r}) and (\ref{cv-xi}) 
imply that
there exists a coupling between the discrete and the continuum levels such that   
$$
\{\left( S_{\eps_n}(\cU^{b_n}(\Sigma_0),\  R_n(0,T), \  \{\xi_n^0(q)\}_{q\in\Q^+} , \  \xi_n^0(T)  \right)\}_{n\geq1}  \ \to \ \left(\Net^{b}(\Sigma_0), R(0,T),  \{\xi^0(q)\}_{q\in\Q^+} , \xi^0(T)  \right) \  \ {a.s.}. 
$$
From now until the end of this subsection, we   
will work under this coupling. 
In particular, if $V_n$ denotes the analog of $V$ (as defined in Lemma \ref{lem-si}),
then $V_n\Longrightarrow V$ a.s. under our coupling.

In the next lemma, $A_n(z^1,\cdots,z^k)$ will refer to the discrete analog of $A(z^1,\cdots,z^k)$ (i.e., the set of paths in the rescaled net $S_{\eps_n}(\cU^{b_n}(\Sigma_0))$
with discrete finite graph representation $(z^1,\cdots,z^k)$).

\blem\label{lem:zx}
   Let $(z^1,\cdots,z^k)$ be such that $A(z^1,\cdots,z^k)\ \neq \ \emptyset$. For almost every realization of our coupling, there exists 
$(z^1_n,\cdots, z^k_n)$ such that   
$$
\forall i\in\{1,\cdots,k-1\},  \ z^i_n \in R_n(0,T) \rightarrow z^i 
$$
and $x^k_n\in\xi^0_n(T)$ such that
$
z^k_n=(x^k_n,T) \rightarrow z^k. 
$
Further,
the set of paths
$
A_n(z_n^1,\cdots,z_n^k) 
$
converges to $A(z^1,\cdots,z^k)$ in $({\cal H},d_{\cal H})$ a.s..
\elem
\begin{proof}
The existence of the $z_n^i$'s is a direct consequence of the fact that $V_n\Longrightarrow V$ a.s.. The rest of the proof (i.e., the convergence
$A_n(z_n^1,\cdots,z_n^k)$  to $A(z^1,\cdots,z^k)$) is decomposed into two steps.

\medskip

{\bf Step 1.} Let $\delta(z^1,\cdots,z^k) :=d_{{\cal H}}\left( A(z^1,\cdots,z^k), B(z^1,\cdots,z^k)\right)$. Define
$$
\bar A_n(z^1,\cdots,z^k) \ = \ \{\pi_n\in S_{\eps_n}(\cU^{b_n}(\Sigma_0)) \ : \ \ \min_{\pi\in A(z^1,\cdots,z^k)} d(\pi_n,\pi) \ \leq \ \frac 1 3 \delta(z^1,\cdots,z^k)  \}, 
$$
and 
$$
\bar B_n(z^1,\cdots,z^k) \ = \ \{\pi_n\in S_{\eps_n}(\cU^{b_n}(\Sigma_0)) \ : \ \ \ \min_{\pi\in B(z^1,\cdots,z^k)} d(\pi_n,\pi) \ \leq \ \frac 1 3 \delta(z^1,\cdots,z^k)  \}. 
$$
In this step, we will show that  after a finite rank, for any $\pi_n\in \bar A_n(z^1,\cdots,z^k)$,
the finite graph representation of $\pi_n$ is a subsequence of 
$(z^1_n,\cdots,z^k_n)$, and that $A_n(z^1_n,\cdots,z^k_n)\subseteq \bar A_n(z^1,\cdots,z^k)$.

By Corollary \ref{a-set}, $\delta(z^1,\cdots,z^k)>0$ a.s.. From there, it is straightforward to check from the definition of $\delta(z^1,\cdots,z^k)$ and the fact that under our coupling $S_{\eps_n}(\cU^{b_n}(\Sigma_0))$ converges to $\Net^b(\Sigma_0)$ a.s.
that:
\be\label{dh}
d_{{\cal H}}(\bar A_n(z^1,\cdots,z^k), A(z^1,\cdots,z^k))\to 0, \ \ d_{{\cal H}}(\bar B_n(z^1,\cdots,z^k), B(z^1,\cdots,z^k)) \to 0 \ \mbox{a.s.}
\ee
as $n\to\infty$, and that $\bar A_n(z^1,\cdots,z^k)$ and $\bar B_n(z^1,\cdots,z^k)$ form a partition of the set  $S_{\eps_n}(\cU^{b_n}(\Sigma_0))$ for large enough $n$.
Let us now derive two consequences of this result.

First, Corollary \ref{a-set} easily implies that 
$$
\min_{\bz\in V,  \ \bz\notin\{z^1,\cdots,z^k\} } \ \ \ \  
\min_{\pi\in A(z^1,\cdots,z^k), s\geq0} \rho\left((\pi(s),s), \bar z\right) >0. \   
$$
Further, under our coupling, $V_n\Longrightarrow V$ a.s., and since $\bar A_n(z^1,\cdots,z^k)\to A(z^1,\cdots,z^k)$ (in $({\cal H},d_{\cal H}))$, the equicontinuity of the paths in $\cup_n S_{\eps_n}(\cU^{b_n})$ combined with the previous limit implies that
$$
\liminf_{n\to\infty} \ \  \min _{\bz_n\in V_n, \ \bz_n\notin\{z^1_n,\cdots,z^k_n\}} \  \ \ \min_{\pi_n\in \bar A_n(z^1,\cdots, z^k), s\geq0} \ \rho\left((\pi_n(s),s), \bz_n\right)>0.  
$$
As a consequence, after a finite rank, for any $\pi_n\in \bar A_n(z^1,\cdots,z^k)$, 
the (discrete) finite graph representation of $\pi_n$ is a subsequence of 
$(z^1_n,\cdots,z^k_n)$. (In words, some $z_n^i$'s can be ``missed'', but any point in the finite graph representation of $\pi_n$ must be 
in $(z^1_n,\cdots,z^k_n)$). 

Secondly, by reasoning along the same lines, one can deduce that 
after a finite rank, the finite graph representation of any path $\pi_n \in \bar B_n(z^1,\cdots,z^k)$ must be distinct from $(z^1_n,\cdots,z^k_n)$.
Since
$\bar A_n(z^1,\cdots,z^k)$ and $\bar B_n(z^1,\cdots,z^k)$  form a partition of the set of discrete net paths starting at time $0$ (for large enough $n$),
it follows that $A_n(z^1_n,\cdots,z^k_n)\subseteq \bar A_n(z^1,\cdots,z^k)$  after a finite rank.

\medskip

{\bf Step 2.} We now show that $\bar A_n(z^1,\cdots,z^k)=A_n(z^1_n,\cdots,z^k_n)$ for $n$ large enough.
Since we showed in Step 1 that after a finite rank, for any $\pi_n\in \bar A_n(z^1,\cdots,z^k)$,
the finite graph representation of $\pi_n$ is a subsequence of 
$(z^1_n,\cdots,z^k_n)$, and since $A_n(z^1_n,\cdots,z^k_n)\subseteq \bar A_n(z^1,\cdots,z^k)$, 
it is sufficient to show that 
for $n$ large enough, any $\pi_n\in \bar A_n(z^1,\cdots,z^k)$
must pass through $z^1_n,\cdots,z^k_n$.

First, for $n$ large enough, is is easy to see that any $\pi_n\in \bar A_n(z^1,\cdots,z^k)$ must go through $z^k_n$. This is a direct consequence of the fact that $\xi^0(T)$ is locally finite, $\xi^0_n(T)\Longrightarrow \xi^0(T)$ a.s. under our coupling, and the fact $\bar A_n(z^1,\cdots,z^k)$ converges to $A(z^1,\cdots,z^k)$ a.s. (see Step 1). \

Let us now argue that after some rank, any $\pi_n\in \bar A_n(z^1,\cdots,z^k)$
must also go through the point $z_n^i$ for $i<k$. In fact, we will show a little more: We will show by induction on $i\in\N$, that 
for any $m>i$ and any $\bz^1,\cdots\bz^m$ such that $A(\bz^1,\cdots\bz^m)\neq\emptyset$, for $n$ large enough, any $\pi_n\in\bar A_n(\bz^1,\cdots,\bz^m)$
must go through $\bz^1_n,\cdots,\bz^i_n$, where $\{\bz^i_n\}$ is defined analogously to $\{z^i_n\}$ (i.e., as  a sequence of points 
in $V_n$ approximating $\bz^i$).  

   {\bf The case \bf i=1: }    
Let us first show the property for $i=1$. We argue by contradiction. Going to a subsequence if necessary, 
let us assume that there exists 
an infinite sequence $\{\pi_n\}_n$ with $\pi_n\in \bar A_n(\bz_1,\cdots,\bz^m)$  (with $m>1$)
and such that $\pi_n$ does not go through the point $\bz^1_n$
(i.e., if $\bz_n^1=(\bar x^1_n, \bar t^1_n)$ then $\pi^1_n(\bar t^1_n)\neq \bar x^1_n$). 
Going to a further subsequence,
we can assume w.l.o.g. that $\{\pi_n\}$ converges 
to $\pi\in A(\bz^1,\cdots,\bz^m)$. (The fact that $\pi\in A(\bz^1,\cdots,\bz^m)$
easily follows from Corollary \ref{a-set}.)
Since $m>1$, the point $\bz^1=(\bar x^1, \bar t^1)$ is a relevant separation point.
From this observation,
one can construct
a path $\pi'\in\Net^b(\Sigma_0)$  such that $\pi=\pi'$ up to $\bar t^1$ and the paths $\pi$ and $\pi'$ separate after $\bar t^1$. (This is done by concatenating $\pi$
with one of the paths in $(\Wr, \Wl)$ starting from $\bz^1$: for instance, following the notation of Proposition \ref{geo-net-sp} and Fig. \ref{1-2}, 
if $\pi$ is squeezed between $l$ and $r'$, we concatenate $\pi$ with $r$. The constructed path belongs to the net 
using the stability of the net under hopping -- See \cite{SS07}.)

Since  $S_{\eps_n}(\cU^{b_n}(\Sigma_0))$ 
converges to $\Net^b(\Sigma_0)$, there is a sequence  $\{\pi_n'\}_n$ 
in $S_{\eps_n}(\cU^{b_n}(\Sigma_0))$ 
converging to $\pi'$. Let $q\in\Q$, with $q<\bar t^1$. $\xi^0(q)$ is a.s. locally finite and under our coupling $\xi^0_n(q) \Longrightarrow \xi^0(q)$ a.s.. Since 
$$
\lim_{n\to\infty} |\pi_n(q)-\pi_n'(q)| \ = \ |\pi(q)-\pi'(q)|=0,
$$ 
it follows that $\pi_n(q)=\pi_n'(q)$ for $n$ large enough.
Finally, since $\pi$
and $\pi'$ separate at $\bar z^1$, $\pi_n'$ and $\pi_n$ must separate at some    point  $(x_n,t_n)$    with
$\limsup t_n\leq \bar t^1$. However, in Step 1, we showed that after a finite rank,
the finite graph representation of $\pi_n$ is a subset of    $\{\bz^1_n,\cdots,\bz^k_n\}$. 
Since $\limsup t_n\leq \bar t^1$, it follows that for large $n$ , $(x_n,t_n) = \bz^1_n$ and so
$\pi_n$ must go through $\bar z^1_n$ (for $n$ large enough), which yields a contradiction.
This shows the induction hypothesis for $i=1$.

{\bf The induction step from i-1 to i:} Let us now assume that our property holds at rank $i-1$ with $i\geq 2$. Again we argue by contradiction and we assume
that there exists a subsequence $\pi_n\in \bar A_n(\bz^1,\cdots,\bz^m)$ (with $m>i$) such that $\pi_n(\bar t^i_n)\neq \bar x^i_n$ (where $\bz_n^i=(\bar x_n^i, \bar t_n^i)$).
By arguing as in the case $i=1$, we can assume w.l.o.g. that $\pi_n$ converges to $\pi\in A(\bz^1,\cdots,\bz^m)$, and that there exists 
$\pi'\in\Net^b$ such that $\pi$ and $\pi'$ coincide up to $\bar t^i$ and separate afterwards. Let $(\tilde z_1,\cdots, \tilde z^l)$
be the finite graph representation of $\pi'$. By construction, $l>i$ and $\tilde z^j=\bz_j$ for $j\leq i$. Since 
$\bar A_n(\tilde z^1,\cdots, \tilde z^l)\to A(\tilde z^1,\cdots, \tilde z^l)$ (by Step 1),
there exists $\pi_n'\in \bar A_n(\tilde z^1,\cdots, \tilde z^l)$ such that $\{\pi_n'\}$
converges to $\pi'$.

By using our induction hypothesis, $\pi_n$ and $\pi_n'$ must go through the point $\bz^{i-1}_n=\tilde z_n^{i-1}$ after a finite rank. Further, since $\pi$ and $\pi'$
separate at $\bz^i$, the two paths $\pi_n$ and $\pi_n'$ must separate at some time $t_n$ with $\limsup t_n \leq \bar t^{i} $.
However, for $n$ large enough,
the finite graph representation of $\pi_n$ is a subset of $(\bz^1_n,\cdots,\bz^k_n)$, and since $\pi_n$
and $\pi_n'$ coincide at $\bar t_n^{i-1}$, 
$\pi_n$ and $\pi_n'$ must separate at $\bar z_n^i$.
As a consequence,
$\pi_n$ goes through the point $\bz_n^{i}$, which yields the desired contradiction. This  ends  the proof of Lemma
\ref{lem:zx}.
\end{proof}

\begin{proof}[Proof of Proposition \ref{prop:dc1}]
Under our coupling, we already know that $R_n(0,T)\Longrightarrow R(0,T)$ a.s. It is enough to show that
for every $z=(x,t)\in V$, and for  every $z_n=(x_n,t_n)\in V_n$ with $z_n\to z$, we have $\lim_{n\to\infty} \ d^0_n(z_n) = d^0(z)$ a.s.. Note that
if $z=(x,t)\in R(0,T)$ then
$$
d^0(z) \ = \ \inf\{ |x-y| \ : \ \exists (z^1,\cdots,z^k) \ \mbox{and } \pi\in A(z^1,\cdots,z^k), \ \mbox{s.t. $y=\pi(t)$ and} \ \forall i\leq k, \ z\neq z^i \}.
$$
Thus, if we write
$$
m\left((z^1,\cdots,z^k), z\right) \ = \ \inf\{ |x-y| \ : \ \exists \pi\in A(z^1,\cdots, z^k) \ \mbox{s.t. } y=\pi(t) \}
$$
$d^0(z)$ is the infimum of all the $m\left((z^1,\cdots,z^k), z\right)$'s such that  $\forall i\leq k, \ z\neq z^i$. The analogous statement holds at the discrete level.

Let $m_n$ denote the discrete analog of $m$, and let $z_n,z^1_n,\cdots,z^k_n\in V_n$ converging to $z,z^1,\cdots, z^k\in V$
such that $A(z^1,\cdots,z^k)\neq \emptyset$. Lemma \ref{lem:zx} implies that $m_n\left((z^1_n,\cdots,z^k_n), z_n\right)$ converges to $m\left((z^1,\cdots,z^k), z\right)$. The latter convergence statement extends by taking the infimum over finitely many $m\left((z^1,\cdots,z^k), z\right)$'s
and $m_n\left((z^1_n,\cdots,z^k_n), z_n\right)$.
Proposition \ref{prop:dc1} is then a   
direct    consequence of the equicontinuity of  the paths in $\Net^b$ and $\cup_n S_{\eps_n}(\cU^{b_n})$.

\end{proof}

\subsection{Proof of Proposition \ref{prop-principal}}
\label{proof-of}

In the course of the proof of Proposition \ref{finite-SP}, we already proved that
$N<\infty$ (where $N$ is defined as in (\ref{def:N})) and for every $1\leq i\leq N$, the random point measure    $\bar{\cal R}^{i}$ is finite a.s..
When $\theta^i<T$, $\bar {\cal L}^i$ is a singleton a.s., and when $\theta^{i}>T$,
the number of atoms in $\bar {\cal L}^i$ coincides with the cardinality of $\{x \ : \ x\in\xi^0(T)\cap (-L,L)\}$. 
By Proposition \ref{bc-set}, this shows that 
that $\bar{\cal L}^i$ is also a finite measure a.s..
%It remains to prove the joint convergence of $\{\{\bar {\cal R}_n^i\}_{1\leq i\leq N_n}\}_{n\geq1}$ and
% $\{\{\bar {\cal L}_n^i\}_{1\leq i\leq N_n}\}_{n\geq1}$.
  In this section, to prove Proposition \ref{prop-principal}, our main task will be to prove  the following two lemmas:
\blem\label{conv-0}
$(S_{\eps_n}(\cU^{b_n,\kappa_n}), \bar {\cal L}_n^{1})\ \to \
(\Net^{b,\kappa}, \bar {\cal L}^1)$ in law.
\elem

\blem\label{conv-1}
$
(S_{\eps_n}(\cU^{b_n,\kappa_n}),  \bar {\cal R}_n^{1} ) 
\ \to \ ( \Net^{b,\kappa}, \bar {\cal R}^{1})
$ 
in law.
\elem
We note that in both lemmas, the convergence is meant in the product topology ${\cal H}\times{\cal P}$.
%where $K_n^{\kappa_n}$ denotes the set of killing points in the discrete net $S_{\eps_n}(\cU^{b_n})$.

By using  the strong Markov property and the stationarity in time of the killed net
(both at the discrete and continuum level), Lemmas \ref{conv-0} and \ref{conv-1}   will  imply
that 
\beqnn
\{(S_{\eps_n}(\cU^{b_n,\kappa_n}), \{\bar {\cal R}_n^{i}\}_{i \geq 1})  \}_{n\geq1} \ \to\ \  (\Net^{b,\kappa}, \{\bar {\cal R}^{i}\}_{i\geq 1}) \ \mbox{and  } \\
\{(S_{\eps_n}(\cU^{b_n,\kappa_n}), \{\bar {\cal L}_n^{i}\}_{i \geq 1})  \}_{n\geq1} \ \to\ \  (\Net^{b,\kappa}, \{\bar {\cal L}^{i}\}_{i\geq 1}) \ \ \mbox{in law,}
\eeqnn
where the convergence is again meant in the product topology. 
Since $N$ (resp., $N_n$) coincides with the first $i$ such that $\bar {\cal R}^{i}$ (resp., $\bar {\cal R}^{i}_n$) is empty,
this implies
$$
\left\{ \left(S_{\eps_n}(\cU^{b_n,\kappa_n}), \sum_{i=1}^{N_n} \bar {\cal R}_n^{i}\right)   \right\}_{n\geq1} \ \to\ \  \left(\Net^{b,\kappa},  \sum_{i=1}^{N} \bar {\cal R}^{i}\right)$$\ \ \mbox{and  }  \ \
$$\left\{ \left(S_{\eps_n}(\cU^{b_n,\kappa_n}), \sum_{i=1}^{N_n} \bar {\cal L}_n^{i}\right)  \right\}_{n\geq1} \ \to\ \  \left(\Net^{b,\kappa},  \sum_{i=1}^{N} \bar {\cal L}^{i} \right) \ \ \mbox{in law}.
$$
%Let $K_n^{k_n}$ be the set of killing points in the discrete net $S_{\eps_n}(\cU^{b_n,k_n})$.
%By Proposition~4.1 in \cite{NRS15}, $(S_{\eps_n}(\cU^{b_n}),K_n^{\kappa_n})$ converges to $(\Net^b, {\cal M}^\kappa)$ (see
%\cite{NRS15} for the precise meaning of this convergence statement). 
Using Theorem \ref{onewebteo}, 
it is not hard to prove that $S_{\eps_n}(\cU^{b_n,\kappa_n}, \ \cU^{b_n,\kappa_n}(\Sigma_0))$
converges (in law) to $(\Net^{b,\kappa}, \Net^{b,\kappa}(\Sigma_0))$ (this is done in Lemma~6.19 in \cite{ScheSS08b} for the case $\kappa_n,\kappa=0$. The same type of argument carries over for the killed Brownian net).
Proposition \ref{prop-principal} then follows by a direct application of Lemma \ref{lem-tech} (taking $X,Y=\Net^{b,\kappa}$).
The rest of the section is
dedicated to the proofs of Lemmas  \ref{conv-0} and \ref{conv-1}.

\begin{proof}[Proof of Lemma \ref{conv-0}]

In Lemma \ref{cv-xi-lemma}, we showed that 
\begin{enumerate}
\item[(1)] $(\xi_n^0(T), S_{\eps_n}(\cU^{b_{n}})$ converges  to $(\xi^0(T), \Net^b)$ in law.
\end{enumerate}

In \cite{NRS15}, we showed that  the following statements hold in distribution: 
\begin{enumerate}
\item[(2)] $\{(x,\theta^1_n)\ \ \mbox{killing point} \ : \ x\in\xi_n^1(\theta_n^1)\cap (-L,L)   \}$ converges to
the singleton $(x,\theta^1) \in {\cal M}^k(0)$,
\item[(3)] and finally\begin{eqnarray}
\theta_n^1 \ \to \ \theta^1, \ \mbox{and} \ \xi^{0}_n(\theta_n^1) \ \Longrightarrow 
\ \xi^{0}(\theta^1).  \label{111}
\end{eqnarray}
\end{enumerate}
The second item can be found in Proposition 4.4(1) and  Lemma 4.4 in \cite{NRS15}. 
The third item is the content of Corollary 4.2 in  \cite{NRS15}.
Further, analogously to Theorem \ref{onewebteo},
a closer look at the proofs shows that
the latter results were shown by constructing a coupling 
between 
$
\{(S_{\eps_n}(\cU^{b_n}), S_{\eps_n}( {\cal U}^{b_{n},\kappa_{n}} ))\}_{n>0}$
and
$(\Net^{b}, \Net^{b,\kappa})$
such that  under this coupling,
the convergence of the random variables above
and the convergence of $S_{\eps_n}(\cU^{b_n,\kappa_n})$ to $\Net^{b,\kappa}$ hold jointly in probability. 

Using Lemma \ref{lem-tech}, 
this shows the convergence (in law) of $\{(S_{\eps_n}(\cU^{b_n,\kappa_n}),\theta_n^1,\xi_n^0(\theta_n^1))\}$
to its continuum counterpart, and from there it is easy to see that $(S_{\eps_n}(\cU^{b_n,\kappa_n}), \bar {\cal L}_n^{1})\to
(\Net^{b,\kappa},\bar {\cal L}^1)$. 
\end{proof}

\begin{proof}[Proof of Lemma \ref{conv-1}] 
Combing Proposition \ref{prop:dc1}, Lemma \ref{conv-0}, together with the convergence of
$\{(S_{\eps_n}(\cU^{b_n,\kappa_n}),\theta_n^1,\xi_n^0(\theta_n^1))\}$ to $(\Net^{b,\kappa}, \theta^1, \xi^0(\theta^1))$ --- 
see previous lemma --- we have
\beqnn
\forall t\geq0, \ \{(S_{\eps_n}(\cU^{b_n}), \bar R_n(0,t))\}  \ \to \ (\Net^b ,\bar R(0,t)),  \  \ \  \
\ \ \ (S_{\eps_n}(\cU^{b_n,\kappa_n}), \bar {\cal L}_n^{1})\ \to \
(\Net^{b,\kappa}, \bar {\cal L}^1), \ \
\\ \mbox{and}   \ \ \ \ \{(S_{\eps_n}(\cU^{b_n,\kappa_n}),\theta_n^1,\xi_n^0(\theta_n^1))\} \to (\Net^{b,\kappa}, \theta^1, \xi^0(\theta^1))  
 \mbox{          in law}.
\eeqnn
Furthermore, by Theorem \ref{onewebteo}, $S_{\eps_n}(\cU^{b_n}, \cU^{b_n, \kappa_n})$
converges to $(\Net^b, \Net^{b,\kappa})$.
By the Skorohod representation theorem and Lemma \ref{lem-tech} with $(X,Y)=(\Net^b, \Netbk)$,
this implies that there exists a coupling between the discrete and continuum levels such that 
(1)    $S_{\eps_n}(\cU^{b_n})$ converges to $\Net^b$    a.s., 
(2) $\bar {\cal L}_n^1$ converges to $\bar {\cal L}^1$ a.s., 
(3) the convergence statement $\{(S_{\eps_n}(\cU^{b_n,\kappa_n}),\theta_n^1,\xi_n^0(\theta_n^1))\}$  hold a.s., 
and (4) such that 
$
\bar R_n(0,t) \Longrightarrow \bar R(0,t) 
$
a.s. for any positive rational value of $t$ and for $t=T$. We will work under this coupling 
for the rest of the proof.

Next, we first note that
\be\label{bound-case}
\forall t\in\Q^+, t=T, \ \ R(0,t)\cap \{(x,s)\ : \ s\in[0,t], \ x=\pm L\} = \emptyset \ \mbox{a.s.}
\ee
(This is a direct consequence of the fact that $R(0,t)$ is locally finite a.s. and translation invariance of the Brownian net along
the $x$-axis). 
Since $\forall t\in\Q^+, t=T,  \ \bar R_n(0,t)\ \Longrightarrow \bar R(0,t)$ a.s., this implies that
\be\label{bvc}
\forall t\in\Q^+, t=T, \ \sum_{z\in R_n(0, t) \cap \square_{0,t}^L} \ \delta_{z,d^0_n(z)}  \ \Longrightarrow \sum_{z \in  R(0,t) \   \cap \square_{0,t}^L} \ \delta_{z,d^0(z)}  \ \mbox{a.s..} 
\ee
%where we recall that  $\square_{0,t}^L=(-L,L)\times[0,t]$.
(\ref{bvc}) when $t=T$ entails that Lemma \ref{conv-1} holds for almost every realization on $\{\theta^1>T\}$.
In order to complete the proof of Lemma \ref{conv-1}, 
it remains to show that  
\be\label{targ}
\sum_{z\in R_n(0,\theta^1_n) \cap \square_{0,\theta^1_n}^L} \ \delta_{z,d^0_n(z)}  \ \Longrightarrow \sum_{z \in  R(0,\theta^1) \   \cap \square_{0,\theta^1}^L} \ \delta_{z,d^0(z)}  \ \mbox{a.s..} 
\ee 
We will proceed in three steps.

{\bf Step 1.} 
Since $\Net^b$ is a set of equicontinuous paths a.s., for almost every realization of our coupling,
there exists $L'\in\Q^+$ with $L'>L$    (and $L'$ random)    such that any path in $\Net^b(\Sigma_0)$
hitting a point in $R(0,\theta^1)\cap \square_{0,\theta^1}^L$ 
remains in the box $\square_{0,\theta^1}^{L'}$
between time $0$ and $\theta^1$, and further, under our coupling, the same holds at the discrete level for $n$ large enough.
In the following, we show that for almost every realization of our coupling,
there exists a (random) rational $q<\theta^1$ such that 
\beqn
& R(0,\theta^1) \cap \square_{q,\theta^1}^{L'}   = &  \emptyset, \label{empty0} \\
\mbox{and for $n$ large enough }, & R_n(0,\theta^1_n) \cap \square_{q,\theta^1_n}^{L'}   = &  \emptyset. \nonumber
\eeqn
We first note that if $x<x'$, $R(0,\theta^1)\cap \square_{x',\theta^1}^{L'}\subseteq R(0,\theta^1)\cap \square_{x,\theta^1}^{L}$, 
and that
the same property holds at the discrete level. Since under our coupling, $\theta^1_n\to\theta^1$ a.s., it is enough to show that 
\beqn
\inf\{x<\theta^1\ : \  R(0,\theta^1) \cap \square_{x,\theta^1}^{L'} = \emptyset\} & < & \theta^1,\nonumber \\ 
\limsup_{n\to\infty}   \ \  \inf\{x<\theta^1_n\ : \  R_n(0,\theta^1_n) \cap \square_{x,\theta^1_n}^{L'} = \emptyset\} & < & \theta^1 \ \ \mbox{a.s.} \label{empty1}
\eeqn
The first property simply follows from the fact that $R(0,\theta^1)$ is locally finite (which was established in the proof of Proposition \ref{finite-SP}). 
Let us consider a realization such that the 
$\limsup$ above is equal to $\theta^1$ and let us show that it must belong to a set of measure zero. Going to a subsequence if necessary,
there is a sequence $q_{n}\uparrow \theta^1$ such that
$$ 
R_n(0,\theta^1_n) \cap \square_{q_n,\theta^1_n}^{L'} \neq \emptyset.
$$
In particular,
there must exist a sequence of discrete  separation points $\{z_n\}$ with $z_n\in R(0,\theta_n^1) \cap \square_{q_n, \theta_n^1}^{L'}$, and $\pi^1_n, \pi^2_n \in S_{\eps_n}(\cU^{b_n}(\Sigma_0))$
such that $\pi^1_n$ and $\pi^2_n$ 
pass through the point $z_n$ but attain two distinct points 
$(x^1_n,\theta_n^1)$ and $(x^2_n,\theta_n^1)$ at time $\theta_n^1$.
Recall that $\xi^0(\theta^1)$ is locally finite a.s. and that under our coupling
$$
\xi^0_n(\theta^1_n) \ \Longrightarrow \ \xi^0(\theta^1)  \ \mbox{a.s.}
$$
In particular, the sets $\xi^0_n(\theta^1_n)$ are sparse, in the sense that 
distinct points must remain at a macroscopic distance from each other.
Let us assume w.l.o.g. that the realization under consideration belongs to the set under which the previous limiting statement holds. Since the two distinct points $x_n^1,x_n^2$ alluded to earlier belong to $\xi^0_n(\theta^1)$, this entails 
that the family of paths $\{\pi^1_n,\pi^2_n\}_n$ is not equi-continuous (the two paths $\pi_n^1$ and $\pi_n^2$
separate at time arbitrarily close to $\theta^1_n$ and take two macroscopically distinct values at time $\theta_n^1$).
We already argued that $\cup_n S_{\eps_n}(\cU^{b_n})$ is a set of equicontinuous paths a.s. 
As a consequence, the event
$
\{\limsup_{n\to\infty}   \ \  \inf\{x<\theta^1_n\ : \  R_n(0,\theta^1_n) \cap \square_{x,\theta^1_n}^{L'} = \emptyset\} = \theta^1\} 
$ has measure $0$. It follows that
(\ref{empty1}) (and thus (\ref{empty0})) holds a.s., as claimed earlier. 

\medskip

{\bf Step 2.} For every $q,L'\in\Q^+$,
define $E_{q,L'}$ to be the set of realizations such that (1) for $n$ large enough, any path in   $S_{\eps_n} (\cU^{b_n})(\Sigma_0)$   
hitting a point in $R(0,\theta^1_n)\cap \square_{0,\theta^1}^L$ 
remains in the box $\square_{0,\theta^1}^{L'}$ and (2) $\theta^1>q$, and (3)
 (\ref{empty0}) holds. Note that for any realization in $E_{q,L'}$, any point in $R(0,\theta^1)\cap\square ^L_{0,\theta^1}$
must belong to the rectangle $\square ^L_{0,q}$.

In Step 2, our goal is to show that 
for almost every realization in $E_{q,L'}$,
\be\label{targ2}
\sum_{z\in  R_n(0,\theta^1_n) \ \cap \square_{0,q}^L} \ \delta_{z,d^0_n(z)}  \ \Longrightarrow \sum_{z \in  R(0,\theta^1) \   \cap \square_{0,q}^L} \ \delta_{z,d^0(z)}    \ \ \ \mbox{a.s.}.
\ee
First, under our coupling,
\begin{eqnarray}\label{targ3}
\sum_{z\in R_n(0, q) \cap \square_{0,q}^L} \ \delta_{z,d^0_n(z)}  \ \Longrightarrow \sum_{z \in  R(0,q) \   \cap \square_{0, q}^L} \ \delta_{z,d^0(z)}  \ \mbox{a.s..} 
\end{eqnarray}
Next, on $E_{q,L'}$, since $q<\theta^1$, $R(0,\theta^1)    \cap \square_{0,q}^L$ is a subset of $R(0,q) \   \cap \square_{0,q}^L$: if $t<q$, any two paths separating at $t$ up to time $\theta^1$
must also separate up to time $q$. Let us now show
that for almost every realization in $E_{q,L'}$:
\beqn
\forall z_n=(x_n,t_n) \ \in \ R_n(0,q)  \mbox{ s.t. } z_n\ \to \  z=(x,t)\in R(0,q), \ \mbox{then} \nonumber \\
 \ z\ \in  \ R(0,\theta^1) \ \mbox{ iff } \ z_n\in R(0,\theta^1_n) \ \mbox{for $n$ large enough.}  \ \ \label{eq-step2}
\eeqn
Combining (\ref{targ3}) with (\ref{eq-step2})  yields (\ref{targ2}). 
The rest of Step 2 is dedicated to the proof of (\ref{eq-step2}) (thus showing (\ref{targ2})).

First, let us assume that $z\in R(0,q)$ and that $z$ also belongs to $R(0,\theta^1)$. We now show that $z_n$ (as defined in (\ref{eq-step2}))
belongs to $R(0,\theta^1_n)$ for $n$ large enough. Let $\pi^1,\pi^2\in\Net^b(\Sigma_0)$
passing through $z$ and separating up to $\theta^1$. Since $S_{\eps_n}(\cU^{b_n}(\Sigma_0))$ converges to $\Net^b(\Sigma_0)$ a.s., 
there are two sequences $\pi_n^1$ and $\pi_n^2$ in $S_{\eps_n}(\cU^{b_n}(\Sigma_0))$ such that 
$\pi_n^i$ converges to $\pi^i$, for $i=1,2$. By Lemma \ref{lem-si},  $d^0(z)>0$ a.s.
and further, under our coupling
$d^0_n(z_n)$ converges to $d^0(z)$ a.s.: in other words, $z_n$ must remain macroscopically isolated from the left and from the right. 
Since $\pi_n^1(t_n)-\pi_n^2(t_n)\to 0$, this implies that 
$$
\pi^1_n(t_n)=\pi^2_n(t_n) \ \ \mbox{for $n$ large enough.}
$$
Next, recall that $\pi^1$ and $\pi^2$ separate at $z$ up to time $\theta^1$. By the latter  property,
$\pi^1_n$ and $\pi^2_n$ separate up to $\theta^1_n$    for large enough $n$    at a point $z_n'$ with $\rho(z_n',z_n)\to 0$ as $n\to\infty$.
However, because $R(0,q)$ is locally finite and $R_n(0,q)\Longrightarrow R(0,q)$ a.s.,
$z_n'$ and $z_n$ must coincide after a finite rank. This proves that if $z\in R(0,q)$, but $z$ also belongs to $R(0,\theta^1)$, then $z_n$ also belongs to $R(0,\theta^1_n)$ for $n$ large enough.

Next, let us assume that $z_n\in R_n(0,q)$ and also belongs to $R_n(0,\theta_n^1)$, 
and let $\pi^1_n$ and $\pi^2_n$ in $S_{\eps_n}(\cU^{b_n}(\Sigma_0))$ separating at $z_n$ up
to time $\theta^1_n$. In order to prove (\ref{eq-step2}), we need to show that $z$
belongs to $R(0,\theta^1)$. Going to a subsequence if necessary, $\pi^i_n$ converges to $\pi^i\in\Net^b(\Sigma_0)$ for $i=1,2$.
Furthermore, since $d^0(z)>0$ a.s., arguing as in the previous paragraph, the $\pi^i$'s must go through the point $z$. It remains to prove 
that $\pi^1$
and $\pi^2$ separate up to time $\theta^1$ after passing through $z$. First, since $\xi^0(\theta^1)$
is locally finite a.s., and since $\xi^0_n(\theta^1_n)$ converges to $\xi^0(\theta^1)$ a.s.,
the set of $\xi^0_n(\theta^1_n)$ remains ``sparse'' as $n$ goes to $\infty$.
Since $\pi^1_n(\theta^1_n)\neq\pi^2_n(\theta^1_n)$, this entails
$$
\lim_n \pi^1_n(\theta^1_n) - \pi^2_n(\theta^1_n) >0,
$$
which is equivalent to $\pi^1(\theta^1)\neq \pi^2(\theta^1)$.

We now need to prove that
$\pi^1$ and $\pi^2$ never coincide on the time interval $(t,\theta^1)$ (where $t$ is the time coordinate of $z$).   
Let us assume by contradiction that $\pi^1$ and $\pi^2$ meet again on this time interval.
Since $\pi^1(\theta^1)\neq \pi^2(\theta^1)$, the two paths must separate at another point $z'=(x',t')$, with $t<t'<\theta^1$.
Since we are only considering realizations in $E_{q,L'}$, we must have $t'<q$ (as already mentioned at the beginning of Step 2). 
Let $z_n'\in R_n(0,q)$ converging to $z'$.
Since $d^0(z')>0$ and $d^0_n(z'_n)$ converges to $d^0(z')$, this implies that $\pi_1^n$ and $\pi^n_2$
must also go through $z_n'$ after a finite rank. But this would contradict the fact that $\pi^1_n$ and $\pi^1_n$ 
separate at $z_n$. This ends the proof of (\ref{targ2}).

\medskip

{\bf Step 3.} We are now ready to prove (\ref{targ}). For every  $q,L'\in\Q^+$   , and 
for almost every realization    in  $E_{q,L'}$:
\beqnn
\sum_{z\in R(0,\theta^1) \   \cap \square_{0,\theta^1}^L }  \delta_{z,d^0(z)}
& = &  \sum_{z\in R(0,\theta^1) \   \cap \square^L_{0,q}} \  \delta_{z,d^0(z)} \  + \  \ \sum_{z\in  R(0,\theta^1) \   \cap \square^L_{q,\theta^1}} \  \delta_{z,d^0(z)} \\
& = &   \sum_{z\in R(0,\theta^1) \   \cap \square^L_{0,q}} \  \delta_{z,d^0(z)} \, .
\eeqnn
      Since the analogous identity holds at the discrete level for $n$ large enough, 
(\ref{targ2}) implies that for almost every realization in $E_{q,L'}$
$$
\sum_{z\in R(0,\theta^1_n) \   \cap \square_{0,\theta^1_n}^L }  \delta_{z,d^0_n(z)} \ \Longrightarrow \ \sum_{z\in R(0,\theta^1) \   \cap \square_{0,\theta^1}^L }  \delta_{z,d^0(z)} \ \ \mbox{a.s.}
$$
Finally, since $\P(\cup_{q,L'\in\Q^+} E_{q,L'})=1$ by Step 1,
(\ref{targ}) follows. This   completes   the proof of Lemma \ref{conv-1}.

\end{proof}

\subsection{Proof of Proposition \ref{st2}}
\label{proof-of2}

Going to a subsequence, we can assume w.l.o.g. that the sign of $t_n$ remains constant as $n$ varies.
In the following, we will treat the case $\forall n\in\N, \ t_n\leq0$. The other case can be handled along the same lines.
We decompose the proof of Proposition \ref{st2} into three lemmas.

\blem
Let $\gamma>0$ and define the event
$$
D_{\gamma,n} \ = \ \{ \forall k\in\Z \mbox{ s.t. } \ |(2k+1)\eps_n|\leq\gamma, \ \ G_{n}^{((2k+1)\eps_n,0)}(T) \ \mbox{and } G_{n}^{(0,0)}(T)\  \mbox{only differ by their root}  \}.
$$
Then $\liminf_{\gamma\to0} \liminf_{n\to\infty} \P\left(D_{\gamma,n}\right) = 1$.
\elem
\begin{proof}
Define 
$$
\gamma_n \ := \ \sup\{(2k+1) \eps_n: k\in\N, \ (2k+1)\eps_n \leq \gamma\}.
$$
and let $l_{\gamma,n}$ (resp., $r_{\gamma,n}$) be the leftmost (resp., rightmost) path in $S_{\eps_{n}}(\cU^{b_n})$
starting from $-\gamma_n$ (resp., $\gamma_n$). Finally, we set
$$
\tau_{\gamma,n}:=\inf\{t>0\ : \ r_{\gamma,n}(t)=l_{\gamma,n}(t) \}.
$$
Let $L=1$ in the definition of $\theta_n^1$. Any path in $S_{\eps_n}(\cU^{b_n,\kappa_n})$ starting on $[-\gamma_n,\gamma_n]$ at time $0$ remains squeezed between $l_{\gamma,n}$ and $r_{\gamma,n}$, which easily implies that
$$
D_{\gamma,n}'=\{\tau_{\gamma,n}\leq \theta^1_n, \ \ |r_{\gamma,n}(u)|, |l_{\gamma,n}(u)|< 1, \forall u\in[0,\tau_{\gamma,n}]\}
$$
is a subset of $D_{\gamma,n}$. 
(Indeed, the properties characterizing $D_{\gamma,n}'$
ensure that any path in the killed Brownian net starting at time $0$ at $\bar x_n\in [-\gamma_n,\gamma_n]$
connects the root $(\bar x_n,0)$ to the point $(r(\tau_{\gamma,n}),\tau_{\gamma,n})= (l(\tau_{\gamma,n}),\tau_{\gamma,n})$).
On the other hand, as $n$ go to $+\infty$, $l_{\gamma,n}$ and $r_{\gamma,n}$
converge to drifted Brownian motions starting from $-\gamma$ and $\gamma$ respectively,
those two Brownian motions being independent until they meet.
This easily implies:
$$
\forall  \delta>0, \ \lim_{\gamma\to0} \ \lim_{n\to\infty} \ \P(\tau_{\gamma,n} \leq \delta, \  \ |r_{\gamma,n}(u)|, |l_{\gamma,n}(u)|< 1, \forall u\in[0,\tau_{\gamma,n}]) \ = \ 1,
$$
whereas $\theta^1_n$ converges in distribution to $\theta^1$, with $\theta_1>0$ a.s.. This implies that
$$
\lim_{\gamma\to0} \lim_{n\to\infty} \ \P(D_{\gamma,n}')\ =\ 1,
$$
and since $D_{\gamma,n}'\subseteq D_{\gamma,n}$, this completes the proof of the lemma.

\end{proof}

\blem
Let $\gamma>0$ and define 
$
F_{\gamma,n} \ = \ \{ \forall \pi \in S_{\eps_n}(\cU^{b_n})(z_n),\  \ |\pi(0)|<\gamma/2  \}.
$
Then $\liminf_{n\to\infty} \P\left(F_{\gamma,n}\right)~=~1 $.
\elem
\begin{proof}
This is a direct consequence of the equicontinuity of the paths in
$\cup_{n}S_{\eps_n}(\cU^{b_n})$.
\end{proof}

\blem
Define 
$
H_{n} \ = \ \{ \forall \pi\in S_{\eps_n}(\cU^{b_n,\kappa_n})(z_n), \ e_{\pi}>0     \}.
$
Then $\liminf_{n\to\infty} \P\left(H_{n}\right) = 1$.
\elem
\begin{proof}
Again, we let $L=1$ in the definition of $\theta_n^1$.
Using translation invariance (both in time and space), we have
\beqnn
\P(H_{n}) 
& =  & 
\P(\forall \pi\in S_{\eps_n}(\cU^{b_n,\kappa_n})(0,0), \   e_{\pi}>-t_n) \\
& \geq &
\P(\forall \pi\in S_{\eps_n}(\cU^{b_n,\kappa_n})(0,0), \ |\pi(u)|\leq 1 \ \forall u\in[0,-t_n], \ \theta^1_n>-t_n).
\eeqnn
Finally, the conclusion follows from the equicontinuity of $\cup_{n}S_{\eps_n}(\cU^{b_n,\kappa_n})$
and the fact that $\theta_1^n$ goes to $\theta_1$, with $\theta^1>0$ a.s..
\end{proof}

We are now ready to complete the proof of Proposition \ref{st2}. It is not hard to check that for every $\gamma>0$, 
$$
D_{\gamma,n}\cap F_{\gamma,n}  \cap H_{n} \subset \{\mbox{$G_n^{z_n}(T)$ and $G_n^{(0,0)}(T)$ are isomorphic}  \} 
$$
and thus, 
$$
\P( \{\mbox{$G_n^{z_n}(T)$ and $G_n^{(0,0)}(T)$ are isomorphic}  \}^c ) \ \leq \  \P(D_{\gamma,n}^c) \ + \  \P(F_{\gamma,n} ^c) \ + \ \P(H_{n}^c). 
$$
Letting successively $n$ and $\gamma$ go to $\infty$ and $0$ respectively completes the proof of Proposition \ref{st2}.

\section{Appendix}

In this section we present three models that can be written as a voter model perturbation (VMP). In the following, 
we use the same notation as in Section \ref{decomposition-1}.

\subsubsection{Spatial Stochastic Lotka-Volterra model.} 

As in \cite{CDP11}, we consider  a discrete version of the stochastic Lotka-Volterra model
as introduced by Pacala and Neuhauser \cite{NP00}. It is a spin system
with each site taking value in $\{0,1\}$.
At each time step, the dynamics can easily be
described by a two steps procedure (between $t$
and $t+1$). First,
the particle at $x$ dies with probability $\alpha(1-m_\eps(1-\eta(x))\  f_{x,\eta}(1-\eta(x)))$,
where $m_\eps$ is a function from $\{0,1\}$ to $(0,1)$ and $\alpha\in[0,1]$. 
Secondly,
if it dies, it is replaced by the type of one 
its neighbor chosen according to
the transition kernel $K$. 

For the sake of illustration, we show that when $m_\eps(1)=m_\eps(0)=\eps>0$ (symbiotic
symmetric case) the Lotka-Volterra model
can be written as a perturbation of the voter model. 
Other cases (i.e. when $m_\eps$ is not constant and might be negative)
can be treated in a similar fashion.

\bprop\label{symbiotic}
The symmetric symbiotic Lotka---Volterra model
can be written as a perturbation of the voter model.
\eprop
\begin{proof}
We will show that the symmetric symbiotic Lotka---Volterra model
can be written as a perturbation of the voter model 
\be\label{check1}
P_{x,\eta}^{\eps} \ = \ (1-\eps)\bar P^{v}_{x,\eta}  \ +  \eps B_{x,\eta} 
\ee
with
$$
\bar P_{x,\eta}^{v} \ = \ (1-\alpha) \delta_{\eta(x)} + \alpha\sum_{i} f_{x,\eta}(i) \delta_i
$$
and $B_{x,\eta}$ is a boundary noise with
\begin{eqnarray*}
Q & = & \delta_0 \times K(0,dy) \times K(0,dy) \\ 
g_{0,1,0} = g_{0,0,1} & = &  \half\alpha\delta_1 +(1-\half\alpha)\delta_0 \ \ \ \text{and $g_{0,i,j}=\delta_0$
otherwise.}  \\ 
g_{1,1,0} = g_{1,0,1}  & = & \half\alpha\delta_0 + (1-\half\alpha)\delta_1 \ \ \ \text{and $g_{1,i,j}=\delta_1$
otherwise}
\end{eqnarray*}
(Note that there is no bulk noise in this model.)
For $\eta(x)=0$, we have 
\begin{eqnarray*}
P_{x,\eta}^\eps(1) & = &  \alpha(1-\eps f_{x,\eta}(1)) f_{x,\eta}(1) \\
P_{x,\eta}^\eps(0) &  = &  (1-\alpha(1-\eps f_{x,\eta}(1))) +  
\alpha(1-\eps f_{x,\eta}(1))  f_{x,\eta}(0) 
\end{eqnarray*}
which can be rewritten as
\begin{eqnarray*}
P^\eps_{x,\eta}(1) & = & (1-\eps) [ \alpha f_{x,\eta}(1) ] \ + \ \eps [ \alpha f_{x,\eta}(1) - \alpha f_{x,\eta}(1)^2] \\
                     & = & (1-\eps) [ \alpha f_{x,\eta}(1)  ] \ + \ \eps [ \alpha f_{x,\eta}(0) f_{x,\eta}(1) ]   \\
P^\eps_{x,\eta}(0) & = & (1-\eps) [ \alpha f_{x,\eta}(0) + (1-\alpha) ] \ + \ \eps [ 1- \alpha f_{x,\eta}(0)f(1)].
\end{eqnarray*}
One can get a symmetric expression for $\eta(x)=1$. From there,
one can directly check that (\ref{check1}) holds.
\end{proof}

\subsubsection{The $q$ colors Potts model.}\label{Potts} 
\label{Potts-sect}

In this example, we show how our particular scaling for the voter model
perturbation emerges naturally when considering a
stochastic Potts model at low temperature.
 Stochastic one-dimensional Potts model are  
Markov processes $\{\n_t\}$ in discrete or continuous time taking values
in the state space ${\{1,...,q\}}^\Z$ with an invariant distribution equal (or closely related) to  
the Gibbs measure (at inverse temperature $\beta$) 
with formal Hamiltonian $H$ given
by
\beq
H(\n)= \sum_{x\in\Z} \delta_{\n(x)\neq\n(x+1)}, \label{Ham} 
\eeq
where $\n(x)$ is the $x$ coordinate of the configuration $\n$. Interpreting $\{1,...,q\}$ as a 
set of colors, $H$ simply counts the number of boundaries between the 
color-clusters of the system. In dimension $1$,
it is well known that for $\beta<\infty$ the Gibbs measure 
is unique, and that there is no phase transition for 
the (static) Potts model.

\bigskip

%For each $\beta$, there are infinitely many dynamics which are
%reversible with respect to the
%latter Gibbs measure. 
%Here, we consider one of these
%dynamics 
%``near
%criticality" (i.e. when $\beta\rightarrow\infty$)
%as space and time are rescaled diffusively.
%In the next proposition, we show 
%that

We will primarily be concerned with the following discrete time model in which at each integer time, the values of  $ \n_t(x)$ for $x \in \Z$ all update simultaneously with the following probabilities
\[\
w_\beta= \frac{2 (e^\beta-1)}{q+2(e^\beta-1)}, b_\beta= \frac{(e^\beta-1)^2 q}{((e^{2\beta }-1)+q)(q + 2 (e^{\beta}-1))}, \kappa_\beta = \frac{q}{e^{2 \beta} +(q-1)}
\]

This particular dynamics qualifies
as a perturbation of the voter model, with the parameter $\epsilon$
corresponding to $e^{-\beta}$ and further
$$
k_\beta e^{2\beta}\to q \ \ \ \mbox{and}  \ \ \ b_\beta e^{\beta} \to q/2.
$$

\begin{rmk}\label{rmk2}
The scaling of the branching and killing parameters coincides with the scaling of Theorem \ref{teo1}(3)(i).
\end{rmk}

Later in this section, we will discuss the relation of the Potts model Gibbs measure to this discrete time process. But first, we motivate the choice of transition probabilities by considering a continuous time voter model perturbation with transition rates given by $w_\beta,b_\beta$ and $k_\beta$

\bprop
\label{exact}
Let us consider the continuous time Markov process with transition rates $(w_\beta,b_\beta,\kappa_\beta)$ -- the transitions corresponding to
a walk, branch and kill move respectively (see end of Section \ref{decomposition-1}) --
% satisfying the following relations
%\beqn
%w_\eps  & = & \frac{1}{1+\frac{\eps}{2(1-\eps^2)}  } \, , \no \\
%k_\eps/q & = & \frac{w \eps^2 }{1-\eps} \, , \no \\ 
%k_\eps/q+b_\eps/q & = & \frac{ w \eps}{2(1-\eps^2)} \,  \label{eqn:exacttwo}. 
%\eeqn
and let $\{B_{x,\eta}\}$ for $\eta(x-1)\neq \eta(x+1)$ and $p$ be the uniform distribution on $\{1,\cdots,q\}$
and $B_{x,\eta}=\delta_{\eta(x-1)}$ when $\eta(x-1)=\eta(x+1)$. 
This model defines a
contiuous time $q$-states Potts model 
at inverse temperature which is reversible with respect to Gibbs measure.
\eprop

\begin{proof}
When a Poisson clock rings at site $x$ at time $t$ that site is updated based on the values of spins at the two nearest neighbor sites. Therefore in the detailed
balance equations needed for reversibility  $\eta(x-1)$
and $\eta(x+1)$ are fixed while $\eta_{t_-}(x)$ makes a transition to
$\eta_{t}(x)$. 
For  $\overline{c}_i = (c_\ell, c_i,c_r)$ the color configuration in
$\{1,\dots,q\}^3$ of $(\eta_{t-}(x-1),\eta_{t-}(x),\eta_{t-}(x+1))$, $c_f$
the color of $\eta_{t}(x)$,  with  $\overline{c}_f=(c_\ell, c_f,c_r)$, the detailed balance
equations for the rates $q$  of the transitions 
$\overline{c}_i \to \overline{c}_f$ and $\overline{c}_f \to \overline{c}_i$ are
\begin{equation}
\label{detailedbal}
q(\overline{c}_i \to \overline{c}_f)/q(\overline{c}_f \to \overline{c}_i)
\, = \, \exp({-\beta \Delta H (\overline{c}_i \to \overline{c}_f)}) \, ,
\end{equation}
where
\begin{equation}
\label{energydiff}
\Delta H (\overline{c}_i \to \overline{c}_f) \, = \, H(\overline{c}_f) - H(\overline{c}_i)
\, = \, (\delta_{c_f \neq c_\ell} - \delta_{c_i \neq c_\ell}) \, + \,
(\delta_{c_f \neq c_r} - \delta_{c_i \neq c_r}) \, .
\end{equation}
One can readily check that (\ref{detailedbal})
is satisfied for our particular choice of the triplet $(w_\eps,b_\eps,\kappa_\eps)$.
\end{proof}
%In the previous argument, by changing rates $q$ to probabilities $p$ and $\eta_{t_-}$ to $\eta_t$ and $\eta_t$ to $\eta_{t+1}$, the same argument
%provides a proof of detailed balance for the discrete time model on the odd (or even) spacetime sublattice.

%We note that the case $q=2$
%turns out to be considerably simpler than $q\geq3$ 
%and has been treated in \cite{FINR05}.
%The extra complication for $q\geq3$ is that
%at the boundary between two existing color-intervals,
%a different color can nucleate.

Before returning to our discrete time model where all vertices update simultaneously, we consider a model in which, say, even (resp., odd) vertices update at even (resp., odd) integer times. We leave it as an exercise for the reader to convince herself that the detailed balance calculation in the proposition above for continuous time model implies the same for each time step in this (alternating) discrete time model.

For the (simultaneously updating) discrete model, we note two elementary properties: (A) if one restricts attention to the even (resp., odd) space-time sublattice of $\Z \times \N$, these two restricted stochastic processes are independent of each other. (B) The odd sublattice restriction of this dynamics is identical to the odd sublattice restriction of the alternating discrete model. These two properties imply that an invariant measure for simultaneously updating process is $P \times P'$ where $P$ (resp., $P'$) is the Gibbs measure on $\Z$ restricted to $\Z_{even}$ ($\Z_{odd}$).
%Since we are interested in a stochastic Potts model in discrete time for which the Gibbs measure (and not a product of Gibbs measures) is invariant the simultaneously updating model is not an appropriate choice. Therefore we will study the Potts model restricted to the odd (resp., even) spacetime sublattice or equivalently the alternating discrete model restricted to odd (resp., even) sites of $\Z$.
\subsubsection{Noisy biased voter model.}\label{Biased-voter}
We start with a general description
of this model.
Let $(\kappa_\eps,b_\eps,\alpha)$ be three 
non-negative numbers with $\alpha\in[0,1]$. In a 
continuous time setting, the noisy biased voter model
is a spin system
with transition rates at $x$ :
\beqnn
\text{transition to 1}     &  & (1+b_\eps) f_{x,1}(\eta) + \kappa_\eps (1-\alpha), \\
\text{transition to  2}    &  & f_{2,\eta}(x) + \kappa_\eps \alpha. 
\eeqnn
A discrete time analog of such system
consists in updating each site $x$ at each discrete time step according to the following rule
\beqn\label{expand}
\text{take on color $1$ with probabilty}  \  \ P^\eps_{x,\eta}(1) = \frac{(1+b_\eps) f_{1,\eta}(x) + (1-\alpha)\kappa_\eps }{1+b_\eps f_{1,\eta}(x)+ \kappa_\eps}, \nonumber\\ 
\text{take on color $2$ with probability}   \  \ P^\eps_{x,\eta}(2) = \frac{f_{2,\eta}(x)+\alpha \kappa_\eps}{1+b_\eps f_{1,\eta}(x)+ \kappa_\eps}, 
\eeqn
\bprop\label{biased-voter}
Let $b,k\geq0$, and let $\kappa_\eps = \eps^2 \kappa $ and $b_\eps = \eps b$. In the case $d=1$,
the previous model is a voter model perturbation. 
\eprop

\begin{proof}
We show that the biased voter model is a voter model perturbation
with a boundary nucleation part given by
\beqn
& Q  =  K(0,dy)\times K(0,dy) \times K(0,dy)\no  \\
& g_{2,2,2}  = \delta_2, \ g_{1,1,1}=\delta_1 \\ 
& g_{1,2,2}  =  g_{2,1,2} = g_{2,2,1} = \frac{1}{3} \delta_2 + \frac{2}{3} \delta_{1} \no \\
& g_{2,1,1}  =  g_{1,2,1} = g_{1,1,2} = \frac{b_\eps}{3} \delta_2 + (1-\frac{b_\eps}{3}) \delta_{1}  \label{peew}
\eeqn
and a bulk nucleation part given by
$$
p^\eps(2)= \alpha, \ \ \mbox{}, p^\eps(1)=1-\alpha,
$$
while the rate of branching is $b_\eps$ and the killing rate is $\kappa_\eps$.
Expanding (\ref{expand}) in the $\eps$ parameter, we get
that 
\beqn
P^\eps_{x,\eta}(1) & = &   (f_{x,\eta}(1) + r_\eps(f_{x,\eta}(1))) + b_\eps f_{x,\eta}(1)\left(1-(1+b_\eps)f_{x,\eta}(1) + b_\eps f_{1,\eps} \right) + \kappa_\eps ( (1-\alpha)-f_{x,\eta}(1)) \nonumber \\
P^\eps_{x,\eta}(2) & = &   (f_{x,\eta}(2) - r_\eps(f_{x,\eta}(1)))   + b_\eps f_{2,\eps} ( b_\eps f_{x,\eta}(1)^2  -f_{x,\eta}(1)) + \kappa_\eps ( \alpha -f_{x,\eta}(2)).
\eeqn
where $r_\eps$ is a function such that $r_\eps(f_{x,\eta}(1)) =  O(\eps^3)$ and for $\eps$ small enough, for every value of $f_{x,\eta}(1)\in[0,1]$:
$$
f_{x,\eta}(1) + r_\eps(f_{x,\eta}(1))/w_\eps, \ \  f_{x,\eta}(2) - r_\eps(f_{x,\eta}(1))/w_\eps \geq 0. 
$$ 
After some straightforward manipulations, this can be rewritten under the following form
\beqn
P^\eps_{x,\eta}(1) & = & w_\eps \left(f_{x,\eta}(1) +  r_\eps(f_{x,\eta}(1))/w_\eps \right) 
+ 
b_\eps \left( f_{x,\eta}(1)^3 \ + \  3 f_{x,\eta}(2)^2 f_{x,\eta}(1)  \times  \frac{2}{3} \ + \ 3 f_{x,\eta}(1)^2 f_{x,\eta}(2) \times (1-\frac{b_\eps}{3}) \right)
 \nonumber \\ & &+ 
\kappa_\eps ( 1-\alpha) \nonumber \\
P^\eps_{x,\eta}(2) & = & w_\eps \left(f_{x,\eta}(2)  - r_\eps(f_{x,\eta}(1))/w_\eps\right) 
+ 
b_\eps \left( f_{x,\eta}(2)^3 +   3 f_{x,\eta}(2)^2 f_{x,\eta}(1)  \times  \frac{1}{3}  + \ 3 f_{x,\eta}(1)^2 f_{x,\eta}(2) \times \frac{b_\eps}{3} \right) 
+ 
\kappa_\eps \alpha, \nonumber
\eeqn
where $w_\eps=1-b_\eps-\kappa_\eps$.

Under this condition,
one can readily check that the noisy voter model 
can be written as a 
voter model perturbation with 
a boundary and bulk nucleations chosen as 
described above.
\end{proof}

\bigskip

{\bf Acknowledgments}  The research of K.R. was partially supported by Simons Foundation
Collaboration grant 281207. The research of C.M.N. was supported in part by U.S. NSF
grants DMS-1007524 and DMS-1507019.

\end{document}